\def\draft{n}
\newtheorem{theorem}{Theorem}[section]
\theoremstyle{definition}
\newtheorem{proposition}[theorem]{Proposition}
\newtheorem{lemma}[theorem]{Lemma}
\newtheorem{definition}[theorem]{Definition}
\newtheorem{remark}[theorem]{Remark}
\newtheorem{corollary}[theorem]{Corollary}
\newtheorem{conjecture}[theorem]{Conjecture}
\numberwithin{equation}{section}
\def\printname#1{
        \if\draft y
                \smash{\makebox[0pt]{\hspace{-0.5in}
                        \raisebox{8pt}{\tt\tiny #1}}}
        \fi
}
\newlength{\standardunitlength}
\long\def\@makecaption#1#2{%
     \vskip 10pt

\setbox\@tempboxa\hbox{
       \small\sf{\bfcaptionfont #1. }\ignorespaces #2}%
     \ifdim \wd\@tempboxa >\captionwidth {%
         \rightskip=\@captionmargin\leftskip=\@captionmargin
         \unhbox\@tempboxa\par}%
       \else
         \hbox to\hsize{\hfil\box\@tempboxa\hfil}%
     \fi}
\font\bfcaptionfont=cmssbx10 scaled \magstephalf
\newdimen\@captionmargin\@captionmargin=2\parindent
\newdimen\captionwidth\captionwidth=\hsize
\newcommand{\Span}{\operatorname{Span}}
\def\cxymatrix#1{\xy*[c]\xybox{\xymatrix#1}\endxy}
\def\Mtildehat{\scalerel*{\widehat{\widetilde M}}{\widetilde{M^2}}}
\def\BC{\mathbb C}
\def\D{\Delta}
\def\a{\alpha}
\def\S{\Sigma}
\def\SL{\mathrm{SL}}
\def\PSL{\mathrm{PSL}}
\DeclareMathOperator{\Conj}{Conj}
\def\be{  \begin{equation} }
\def\ee{  \end{equation} }
\def\diag{\text{diag}}
\def\CT{\mathcal T}
\def\R{\mathbb{R}}
\def\Z{\mathbb{Z}}
\def\C{\mathbb{C}}
\def\Q{\mathbb{Q}}
\def\T{\mathcal{T}}
\def\rd{\mathrm{red}}
\def\Tr{\mathrm{Tr}}
\def\Ker{\mathrm{Ker}}
\def\rank{\mathrm{rank}}
\def\Span{\mathrm{Span}}
\def\Stab{\mathrm{Stab}}
\begin{document}

\begin{abstract}
The Ptolemy coordinates for boundary-unipotent $\SL(n,\C)$-representations of a 
3-manifold group were introduced in 
\cite{GaroufalidisThurstonZickert} inspired by the $\mathcal A$-coordinates 
on higher Teichm\"{u}ller space due to Fock and Goncharov. In this paper, 
we define the Ptolemy field of a (generic) $\PSL(2,\C)$-representation and prove that it coincides with the trace field
of the representation. This gives an efficient algorithm to compute the
trace field of a cusped hyperbolic manifold.
\end{abstract}


\title[The Ptolemy field of $3$-manifold-representations]{
The Ptolemy field of $3$-manifold-representations}
\author{Stavros Garoufalidis}
\address{School of Mathematics \\
         Georgia Institute of Technology \\
         Atlanta, GA 30332-0160, USA \newline
         {\tt \url{http://www.math.gatech.edu/~stavros }}}
\email{stavros@math.gatech.edu}
\author{Matthias Goerner}
\address{University of Maryland \\
         Department of Mathematics \\
         College Park, MD 20742-4015, USA \newline
         {\tt \url{http://math.berkeley.edu/~matthias/}}}
\email{enischte@gmail.com}
\author{Christian K. Zickert}
\address{University of Maryland \\
         Department of Mathematics \\
         College Park, MD 20742-4015, USA \newline
         {\tt \url{http://www2.math.umd.edu/~zickert}}}
\email{zickert@umd.edu}
\thanks{S.~G.~and C.~Z.~were supported in part by the NSF. \\
\newline
1991 {\em Mathematics Classification.} Primary 57N10. Secondary 57M27.
\newline
{\em Key words and phrases: Ptolemy coordinates, trace field, 3-manifolds.
}
}

\date{\today }

\maketitle

\tableofcontents

\section{Introduction}

\subsection{The Ptolemy coordinates}
\label{sub.ge}

The Ptolemy coordinates for boundary-unipotent representations of a 
3-manifold group in $\SL(n,\C)$ were introduced in 
Garoufalidis--Thurston--Zickert~\cite{GaroufalidisThurstonZickert} inspired by the $\mathcal A$-coordinates 
on higher Teichm\"{u}ller space due to Fock and Goncharov
\cite{FockGoncharov}. In this paper we will focus primarily on representations in $\SL(2,\C)$ and $\PSL(2,\C)$.  

Given a topological ideal triangulation $\T$ of an oriented compact 3-manifold $M$, 
a \emph{Ptolemy assignment} (for $\SL(2,\C)$) is an assignment of a non-zero 
complex number (called a \emph{Ptolemy coordinate}) to each $1$-cell of $\CT$, such that for each simplex, the Ptolemy coordinates assigned to the edges $\varepsilon_{ij}$
satisfy the \emph{Ptolemy relation}
\begin{equation}\label{eq:PtolemyRelation}
c_{03}c_{12}+c_{01}c_{23}=c_{02}c_{13}.
\end{equation}
The set of Ptolemy assignments is thus an affine variety $P_2(\T)$, which is cut out by homogeneous quadratic polynomials. 

In this paper we define the Ptolemy field of a boundary-unipotent representation and show that it is isomorphic to the trace field. This gives rise to an efficient algorithm for \emph{exact} computation of the trace field of a hyperbolic manifold.


 

\subsection{Decorated $\SL(2,\BC)$-representations}
\label{sub.decorated}
The precise relationship between Ptolemy assignments and representations is given by
\begin{equation}\label{eq:summaryof1to1}
\xymatrix{\left\{\txt{Points in\\$P_2(\T)$}\right\}\ar@{<->}[r]^-{1-1}&\left\{\txt{Natural $(\SL(2,\C),P)$-\\cocycles on $M$}\right\}\ar@{<->}[r]^-{1-1}&\left\{\txt{Generically decorated\\
$(\SL(2,\C),P)$-representations}\right\}.}
\end{equation}
The concepts are briefly described below, and the correspondences are illustrated in Figures~\ref{fig:DecToPtolemy} and~\ref{fig:PtolemyToCocycle}. We refer to Section~\ref{sec:Notation} for a summary of our notation.
The bijections of~\eqref{eq:summaryof1to1} first appeared in Zickert~\cite{ZickertDuke} (in a slightly different form), and were generalized to $\SL(n,\C)$-representations in Garoufalidis--Thurston--Zickert~\cite{GaroufalidisThurstonZickert}. 

\begin{itemize}[itemsep=1mm]
\item \textit{Natural cocycle:} Labeling of the edges of each truncated simplex by elements in $\SL(2,\C)$ satisfying the cocycle condition (the product around each face is $1$). The long edges are counter diagonal, i.e.~of the form $\left(\begin{smallmatrix}0&-x^{-1}\\x&0\end{smallmatrix}\right)$, and the short edges are \emph{non-trivial} elements in $P$. Identified edges are labeled by the same group element.
\item \textit{Decorated representation:} A \emph{decoration} of a boundary-parabolic representation $\rho$ is an assignment of a coset $gP$ to each vertex of $\Mtildehat$ which is equivariant with respect to $\rho$. A decoration is \emph{generic} if for each edge joining two vertices, the two $P$-cosets $gP$, $hP$ are distinct as $B$-cosets. This condition is equivalent to $\det(ge_1,he_1)\neq 0$. Two decorations are considered equal if they differ by left multiplication by a group element $g$.
\end{itemize}

\begin{figure}[htb]
\begin{center}
\hspace{-30mm}
\begin{minipage}[c]{0.3\textwidth}
\scalebox{0.65}{\input{figures_gen/BasicPtolemy.tex}}
\end{minipage}
\hspace{-10mm}
\begin{minipage}[c]{0.3\textwidth}
\scalebox{0.65}{\input{figures_gen/TruncatedCocycle.tex}}
\end{minipage}
\begin{minipage}[c]{0.3\textwidth}
\scalebox{0.62}{\input{figures_gen/PtolemyToCocycleShort.tex}}
\end{minipage}
\\
\hspace{-20mm}
\begin{minipage}[t]{0.32\textwidth}
\captionsetup{width=1\textwidth}
\caption{Ptolemy assignment; the Ptolemy relation~\eqref{eq:PtolemyRelation} holds.}\label{fig:Ptolemy}
\end{minipage}
\hspace{-10mm}
\begin{minipage}[t]{0.32\textwidth}
\captionsetup{width=\textwidth}
\caption[Bla]{Natural cocycle; $\alpha$ is counter-diagonal, $\beta\in P$.}\label{fig:Cocycle}
\end{minipage}
\hspace{5mm}
\begin{minipage}[t]{0.32\textwidth}
\captionsetup{width=\textwidth}
\caption[Bla]{From Ptolemy assignments to natural cocycles.}\label{fig:PtolemyToCocycle}
\end{minipage}
\end{center}
\end{figure}

\begin{figure}[htb]
\begin{center}
\hspace{4mm}
\begin{minipage}[c]{0.3\textwidth}
\scalebox{0.7}{\input{figures_gen/Decoration.tex}}
\end{minipage}
\hfill
\begin{minipage}[c]{0.48\textwidth}
\scalebox{0.63}{\input{figures_gen/DecorationToPtolemy.tex}}
\end{minipage}
\hfill
\\
\begin{minipage}[t]{0.48\textwidth}
\captionsetup{width=1\textwidth}
\caption{Decoration; equivariant assignment of cosets.}\label{fig:Decoration}
\end{minipage}
\hspace{-2mm}
\begin{minipage}[t]{0.48\textwidth}
\captionsetup{width=\textwidth}
\caption[Bla]{From decorations to Ptolemy assignments.}\label{fig:DecToPtolemy}
\end{minipage}
\end{center}
\end{figure}

By ignoring the decoration, \eqref{eq:summaryof1to1} yields a map
\begin{equation}\label{eq:PtolemyToRep}
\mathcal R\colon P_2(\T)\to \big\{(\SL(2,\C),P)\text{-representations}\big\}
\big/\Conj.
\end{equation}
The representation corresponding to a Ptolemy assignment is given explicitly in terms of the natural cocycle.

\begin{remark}
Note that a natural cocycle canonically determines a representation of the edge path groupoid of the triangulation of $M$ by truncated simplices.
\end{remark}

\begin{remark} 
Every boundary-parabolic representation has a decoration, but a representation may have only non-generic decorations. 
The map $\mathcal R$ is thus not surjective in general, and the image depends on the triangulation. However, if the triangulation is sufficiently fine, 
$\mathcal R$ is surjective (see~\cite{GaroufalidisThurstonZickert}). The preimage of a representation depends on the image of the peripheral subgroups (see Proposition~\ref{prop:GenericBoundaryNonTrivial}).\end{remark}

\subsection{Obstruction classes and $\PSL(2,\C)$-representations}
There is a subtle distinction between representations in $\SL(2,\C)$ versus $\PSL(2,\C)$. The geometric representation of a hyperbolic manifold always lifts to an $\SL(2,\C)$-representation, but for a one-cusped manifold, no lift is boundary-parabolic (any lift will take a longitude to an element of trace $-2$~\cite{Calegari}). 

The obstruction to lifting a boundary-parabolic $\PSL(2,\C)$-representation to a boundary-parabolic $\SL(2,\C)$-representation is a class in $H^2(\widehat M;\Z/2\Z)$. For each such class there is a Ptolemy variety $P^\sigma_2(\T)$, which maps to the set of $\PSL(2,\C)$-representations with obstruction class $\sigma$. More precisely, $P^\sigma_2(\T)$ is defined for each $2$-cocycle $\sigma\in Z^2(\widehat M;\Z/2\Z)$, and up to canonical isomorphism only depends on the cohomology class of $\sigma$. The Ptolemy variety for the trivial cocycle equals $P_2(\T)$. The analogue of~\eqref{eq:summaryof1to1} is
\begin{equation}\label{eq:1to1Obstruction}
\xymatrix{\left\{\txt{Points in\\$P^{\sigma}_2(\T)$}\right\}\ar@{<->}[r]^-{1-1}&\left\{\txt{Lifted natural\\$(\PSL(2,\C),P)$-cocycles\\with obstruction cocycle $\sigma$}\right\}\ar@{->>}[r]&\left\{\txt{Generically decorated\\
$(\PSL(2,\C),P)$-representations\\with obstruction class $\sigma$}\right\}.}
\end{equation}
A lifted natural cocycle is defined as above, except that the product along a face is now $\pm I$, where the sign is determined by $\sigma$. The right map is no longer a $1$-$1$ correspondence; the preimage of each decorated representation is the choice of lifts, i.e.~parametrized by a cocycle in $Z^1(\widehat M;\Z/2\Z)$. We refer to \cite{GaroufalidisThurstonZickert} for details. As in~\eqref{eq:PtolemyToRep}, ignoring the decoration yields a map
\begin{equation}\label{eq:PtolemyToRepObstruction}
\mathcal R\colon P_2^{\sigma}(\T)\to \left\{\txt{$(\PSL(2,\C),P)$-representations\\with obstruction class $\sigma$}\right\}
\big/\Conj.
\end{equation}
which is explictly given in terms of the natural cocycle.
\begin{theorem}[Garoufalidis--Thurston--Zickert~\cite{GaroufalidisThurstonZickert}]\label{thm:EssentialEdges}
If $M$ is hyperbolic, and all edges of $\T$ are essential, the geometric representation is in the image of $\mathcal R$.
\end{theorem}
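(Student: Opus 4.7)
The plan is to produce a generic decoration of the geometric representation and then invoke the correspondence in~\eqref{eq:1to1Obstruction}, reading it from right to left: given a generically decorated boundary-parabolic $\PSL(2,\C)$-representation, one obtains a lifted natural cocycle (for the obstruction cocycle $\sigma$ determined by the lift), hence a point of $P_2^\sigma(\T)$ mapping under $\mathcal R$ to the representation.

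First I would construct the decoration. Let $\rho_{\mathrm{geom}}\colon \pi_1(M)\to \PSL(2,\C)$ be the geometric representation, together with an equivariant developing map $D\colon \widetilde M\to \BH^3$. Each ideal vertex of the triangulation $\widetilde \T$ of $\widetilde M$ lies at a cusp, and $D$ extends continuously at ideal vertices, sending them to points of $\partial \BH^3=\BC P^1$. Since the stabilizer in $\PSL(2,\C)$ of a point at infinity is (the image of) the parabolic subgroup $P$, such an assignment is the same as an equivariant assignment of $P$-cosets to the vertices of $\widetilde{\widehat M}$; this is precisely a decoration of $\rho_{\mathrm{geom}}$. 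Equivariance is automatic from $\rho$-equivariance of the developing map, and the fact that $\rho_{\mathrm{geom}}$ is boundary-parabolic (peripheral elements fix the corresponding cusp point) is exactly what is needed.

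Next I would verify genericity. By definition, the decoration fails to be generic at an edge $e$ of $\widetilde \T$ precisely when the two endpoints of $e$ are assigned the same $B$-coset, i.e.\ when $D$ sends the two endpoints of $e$ to the same point of $\BC P^1$. Because the developing map is an isometry on each ideal simplex of the hyperbolic structure, the two endpoints of $e$ are sent to the same ideal point if and only if the endpoints themselves coincide in $\widetilde M$, which in turn holds if and only if the projected edge in $M$ is \emph{not} essential. Thus the hypothesis that every edge of $\T$ is essential guarantees genericity on every edge of the universal cover, and genericity descends to a well-defined condition on $\T$.

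Finally, having produced a generic decoration of $\rho_{\mathrm{geom}}$, the right surjection in~\eqref{eq:1to1Obstruction} gives a lifted natural cocycle with obstruction cocycle some $\sigma\in Z^2(\widehat M;\Z/2\Z)$ (corresponding to the lifting obstruction of $\rho_{\mathrm{geom}}$), and the left bijection translates this to a point $c\in P_2^\sigma(\T)$. By construction, $\mathcal R(c)=[\rho_{\mathrm{geom}}]$ in $\PSL(2,\C)$-representations with obstruction class $\sigma$. The only nonroutine step is the geometric argument identifying non-genericity of the decoration with non-essentiality of an edge; I expect this to reduce cleanly to the fact that an isometric ideal simplex has distinct ideal vertices, so that coincidence of the developed endpoints forces coincidence of the lifted endpoints themselves.
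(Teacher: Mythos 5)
This theorem is quoted from~\cite{GaroufalidisThurstonZickert} and is not proved in the present paper, so there is no internal proof to compare against; I will assess your argument on its own terms. Your overall strategy --- build a decoration from the developing map, check genericity against essentiality of edges, then read~\eqref{eq:1to1Obstruction} from right to left --- is the right one and matches what the cited result actually requires. But two steps need repair.

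The more serious one: you write that ``the stabilizer in $\PSL(2,\C)$ of a point at infinity is (the image of) the parabolic subgroup $P$, such an assignment is the same as an equivariant assignment of $P$-cosets.'' This is not correct. The stabilizer of $\infty$ is the Borel $B$, so $\partial\BH^3 = \BC P^1 \cong \PSL(2,\C)/B$, whereas a decoration lives in $\PSL(2,\C)/P$, a $\BC^*$-bundle over $\BC P^1$. The developing map hands you only a $\rho$-equivariant collection of $B$-cosets; the promotion to $P$-cosets is an extra step, and it is exactly here that the boundary-parabolic hypothesis is used: for a vertex $w$ of $\widehat{\widetilde M}$ with developed image $\xi_w$, choose any $g_w$ taking $\infty$ to $\xi_w$; since $\rho(\Stab(w))$ is parabolic and fixes $\xi_w$, conjugating by $g_w^{-1}$ lands it inside $P$ (not merely $B$), so $g_wP$ is an admissible choice. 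The residual ambiguity is right multiplication by the diagonal torus --- precisely the diagonal action that the paper analyzes in the proof of Proposition~\ref{prop:GenericBoundaryNonTrivial}. Your ``proof'' currently gives no reason why a $P$-coset lift exists, which is where the parabolicity hypothesis enters.

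The second issue is in the genericity step. Saying ``because the developing map is an isometry on each ideal simplex \dots coincidence of the developed endpoints forces coincidence of the lifted endpoints'' is close to circular: the developed simplex fails to be an embedded ideal simplex exactly when some of its vertices develop to the same point, which is what you want to rule out. The clean argument is group-theoretic: an essential edge lifts to an arc whose two endpoints are \emph{distinct} vertices of $\widehat{\widetilde M}$; distinct vertices correspond to distinct maximal parabolic subgroups of the discrete group $\rho_{\mathrm{geom}}(\pi_1 M)$; and in a discrete subgroup of $\PSL(2,\C)$ two distinct maximal parabolic subgroups have distinct fixed points on $\partial\BH^3$ (the stabilizer of a parabolic fixed point is a single maximal parabolic). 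This gives $\det(g_we_1,g_{w'}e_1)\neq 0$ along every edge, which is the genericity condition. Once both of these points are patched, the rest of your argument (invoking the bijections of~\eqref{eq:1to1Obstruction}) goes through.
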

\begin{remark}
If $\T$ has a non-essential edge, all Ptolemy varieties will be empty. Hence, if $P^\sigma_2(\T)$ is non-empty for some $\sigma$, and if $M$ is hyperbolic, the geometric representation is detected by the Ptolemy variety of the geometric obstruction class.
\end{remark}

\subsection{Our results}
We view the Ptolemy varieties $P^\sigma_2(\T)$ as subsets of an ambient space $\C^e$, with coordinates indexed by the $1$-cells of $\T$. Let $T=(\C^*)^v$, with the coordinates indexed by the boundary components of $M$. 

\begin{definition} The \emph{diagonal action} is the action of $T$ on $P^\sigma_2(\T)$ where $(x_1,\dots,x_v)\in T$ acts on a Ptolemy assignment by replacing the Ptolemy coordinate $c$ of an edge $e$ with $x_ix_jc$, where $x_i$ and $x_j$ are the coordinates corresponding to the ends of $e$. Let 
\begin{equation}
P^\sigma_2(\T)_\rd=P^\sigma_2(\T)\big/T.
\end{equation}
\end{definition}

\begin{definition}
A boundary-parabolic $\PSL(2,\C)$-representation is \emph{generic} if it has a generic decoration. It is \emph{boundary-non-trivial} if each peripheral subgroup has non-trivial image.
\end{definition}
\begin{remark}
Note that the notion of genericity is with respect to the triangulation. By Theorem~\ref{thm:EssentialEdges}, if all edges of $\T$ are essential (and $\T$ has no interior vertices), the geometric representation of a cusped hyperbolic manifold is always generic and boundary-non-trivial.
\end{remark}
\begin{remark}
Note that if $M$ has spherical boundary components (e.g.~if $\T$ is a triangulation of a closed manifold), no representation is boundary-non-trivial.
\end{remark}

\begin{proposition}\label{prop:GenericBoundaryNonTrivial}
The map $\mathcal R$ in~\eqref{eq:PtolemyToRepObstruction} factors through $P_2^{\sigma}(\T)_{\rd}$, i.e.,~we have
\begin{equation}\label{eq:GenericBoundaryNonTrivial}
\cxymatrix{{\mathcal R\colon P_2^{\sigma}(\T)_{\rd}\ar@{->}[r]& \left\{\txt{$(\PSL(2,\C),P)$-representations\\with obstruction class $\sigma$}\right\}\big/\Conj.}}
\end{equation}
The image is the set of generic representations, and the preimage of a generic, boundary-non-trivial representation is finite and parametrized by $H^1(\widehat M;\Z/2\Z)$. 
\end{proposition}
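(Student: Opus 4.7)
The plan is to unwind the bijection~\eqref{eq:1to1Obstruction} while carefully tracking the diagonal $T$-action on both sides.

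\emph{Factoring and image.} Given $(x_1,\dots,x_v)\in T$, replace each vertex decoration $g_vP$ above boundary component $v$ by $g_v\mathrm{diag}(x_v,x_v^{-1})P$. Since $\mathrm{diag}(x_v,x_v^{-1})e_1=x_ve_1$, the defining formula $c_{ij}=\det(g_ie_1,g_je_1)$ of Figure~\ref{fig:DecToPtolemy} produces exactly the scaled Ptolemy assignment $x_ix_jc_{ij}$. This identifies the $T$-action with rescaling decorations within their $B$-cosets, which does not change the underlying representation, so $\mathcal R$ factors through $P^\sigma_2(\T)_\rd$. The image equals the set of representations admitting a generic decoration, which is by definition the set of generic representations; surjectivity onto that set follows because any generic decoration produces, via Figure~\ref{fig:DecToPtolemy}, a Ptolemy assignment whose Ptolemy relation is just the Pl\"ucker identity for $2\times 2$ determinants of the vectors $g_ie_1$.

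\emph{Preimage.} Fix a generic boundary-non-trivial $\rho$ with obstruction class $\sigma$. By~\eqref{eq:1to1Obstruction}, the preimage under $\mathcal R$ is the set of pairs (generic decoration of $\rho$ modulo global equivalence, lift of the natural cocycle) modulo the $T$-action. Boundary-non-triviality implies that each peripheral subgroup acts as a non-trivial parabolic with a unique fixed point in $\mathbb{CP}^1$; equivariance of the decoration then pins down the $B$-coset at every vertex of $\Mtildehat$. The residual freedom per boundary component lies in $B/P\cong H\cong\C^*$, and the $T$-action on $\PSL(2,\C)$-decorations factors as the surjection $T=(\C^*)^v\twoheadrightarrow H^v$ with kernel $(\pm 1)^v$. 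Consequently generic decorations of $\rho$ modulo $T$ form a torsor over $(\pm 1)^v$, so the preimage of $[\rho]$ is the quotient of the $Z^1(\widehat M;\Z/2\Z)$-torsor of lifts by this residual sign action.

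\emph{Identification with $H^1$.} The final and most delicate step---where I expect the bulk of the bookkeeping to lie---is identifying the residual $(\pm 1)^v$-action on lifts with the coboundary subgroup $B^1(\widehat M;\Z/2\Z)\subset Z^1(\widehat M;\Z/2\Z)$. A sign $\epsilon_v\in\{\pm 1\}$ at boundary component $v$ lifts to multiplication by $-1$ of the $\SL$-decoration at every vertex of $\Mtildehat$ above $v$, and the explicit formulas in Figure~\ref{fig:PtolemyToCocycle} show that this multiplies the long-edge matrix $\alpha$ by $-1$ on each long edge incident to such a vertex while leaving short-edge matrices $\beta$ unchanged. One checks that this is precisely the coboundary action of the $0$-cochain $v\mapsto\epsilon_v$ on the space of lifts, and that the map $(\pm 1)^v\to B^1(\widehat M;\Z/2\Z)$ so obtained is surjective because every $0$-cochain on $\widehat M$ is cohomologous to one supported on the boundary vertices. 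The preimage of $[\rho]$ is therefore $Z^1(\widehat M;\Z/2\Z)/B^1(\widehat M;\Z/2\Z)=H^1(\widehat M;\Z/2\Z)$, which is finite, completing the proof.
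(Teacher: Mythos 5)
Your proof is correct and follows essentially the same strategy as the paper's, which is considerably terser: the paper establishes that boundary-non-triviality pins decorations down to a single $T$-orbit and then leaves the reduction from the $Z^1(\widehat M;\Z/2\Z)$-torsor of lifts to $H^1(\widehat M;\Z/2\Z)$ essentially implicit, whereas you work out explicitly that the residual $(\pm 1)^v$-stabilizer acts on the lifts by exactly the coboundaries $B^1(\widehat M;\Z/2\Z)$. One wording slip to fix: the set of generic decorations of $\rho$ modulo $T$ is a single point, not a torsor over $(\pm 1)^v$; what you mean---and what the argument actually needs---is that $(\pm 1)^v$ is the $T$-stabilizer of any fixed generic decoration (the kernel of $T\to (B/P)^v$ in $\PSL$), and it is this stabilizer that then acts on the $Z^1$-torsor of lifts over that chosen decoration, yielding $Z^1/B^1=H^1$.
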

\begin{remark}
For the corresponding map from $P_2(\T)_\rd$ to $(\SL(2,\C),P)$-representations, the preimage of a generic boundary-non-trivial representation is a single point.
\end{remark}

\begin{remark}\label{rm:HigherComponents}
The preimage of a representation which is not boundary-non-trivial is never finite. In fact its dimension is the number of boundary-components that are collapsed. In particular, it follows that if $c\in P_2^{\sigma}(\T)_{\rd}$ is in a $0$-dimensional component (which is not contained in a higher dimensional component), the image is boundary-non-trivial.
\end{remark}

By geometric invariant theory, $P^\sigma_2(\T)_\rd$ is a variety whose coordinate ring is the ring of invariants $\mathcal O^T$ of the coordinate ring $\mathcal O$ of $P^\sigma_2(T)$.

\begin{definition}\label{def:PtolemyField}
Let $c\in P^\sigma_2(\T)$. The Ptolemy field of $c$ is the field
\begin{equation}
k_c=\Q\left(\big\{p(c_1,\dots,c_e)\mid p\in\mathcal O^T\big\}\right).
\end{equation}
The Ptolemy field of a generic boundary-non-trivial representation is the Ptolemy field of any preimage under~\eqref{eq:GenericBoundaryNonTrivial}.
\end{definition}
Clearly, the Ptolemy field only depends on the image in $P^\sigma_2(\T)_\rd$. Our main result is the following.
\begin{theorem}\label{thm:PtolemyFieldEqualsTraceField}
The Ptolemy field of a boundary-non-trivial, generic, boundary-parabolic representation $\rho$ in $\PSL(2,\C)$ or $\SL(2,\C)$ is equal to its trace field.\qed
\end{theorem}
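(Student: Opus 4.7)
The strategy is to prove the two inclusions $k_\rho\subseteq k_c$ and $k_c\subseteq k_\rho$ separately, both by routing through the natural cocycle picture of~\eqref{eq:1to1Obstruction}.

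For $k_\rho\subseteq k_c$, the first step is a direct check that the $T$-action on $P_2^\sigma(\T)$ corresponds to a diagonal gauge change on the natural cocycle: if $t=(x_1,\dots,x_v)\in T$ and we set $D_w:=\mathrm{diag}(x_w^{-1},x_w)$, then the explicit formulas in Figure~\ref{fig:PtolemyToCocycle} show that every edge matrix $M_e$ of the lifted natural cocycle transforms as $M_e\mapsto D_iM_e D_j^{-1}$, where $i,j$ index the boundary components of $M$ containing the endpoints of the truncated-triangulation edge $e$. Therefore, for any loop $\gamma$ in the edge path groupoid based at a truncated vertex over $v_0$, the product $M_\gamma=\prod_k M_{e_k}$ telescopes and transforms by pure conjugation $M_\gamma\mapsto D_{v_0} M_\gamma D_{v_0}^{-1}$, so $\tr M_\gamma\in\mathcal O^T$. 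Because $\widehat M\simeq M$, such loops realize all of $\pi_1(M)$, and $\tr M_\gamma=\pm\tr\rho(\gamma)$; evaluating at $c$ places every generator of $k_\rho$ inside $k_c$.

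For $k_c\subseteq k_\rho$, the plan is to argue that every $p\in\mathcal O^T$ is, modulo the bijections of~\eqref{eq:1to1Obstruction}, a conjugation invariant of the associated $\SL(2,\C)$-groupoid representation. The key observation is that the natural cocycle is a slice for the full vertex-gauge action on $\SL(2,\C)$-valued cocycles, and the $T$-action is exactly the residual gauge symmetry preserving the natural form (counter-diagonal long edges and upper unipotent short edges). By the classical Fricke--Klein theorem, the ring of conjugation invariants of an $\SL(2,\C)$-representation of a finitely generated group is generated by traces of loops. Pulling these trace generators back through the slice gives a generating family for $\mathcal O^T$ whose values at $c$ are $\pm\tr\rho(\gamma_\alpha)\in k_\rho$, so $p(c)\in k_\rho$ for every $p\in\mathcal O^T$.

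The main obstacle lies in making this second inclusion rigorous, and is twofold. First, by Proposition~\ref{prop:GenericBoundaryNonTrivial} the map $\mathcal R$ is finite (not injective) over generic boundary-non-trivial representations, with fibers a torsor over $H^1(\widehat M;\Z/2\Z)$; the delicate point is to check that moving between preimages of $\rho$ only changes each $c_e$ by a sign in $\{\pm 1\}$, so $T$-invariant monomials are unaffected and no new scalars enter the Ptolemy field. Second, the boundary-non-trivial hypothesis cannot be dropped here because Remark~\ref{rm:HigherComponents} shows that without it the fibers of $\mathcal R$ have positive dimension and a $T$-invariant could take a whole curve of values over $\rho$, producing a transcendental extension. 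Handling both subtleties --- in the $\SL(2,\C)$ case directly, and in the $\PSL(2,\C)$ case after fixing any lift so that $\tr$ is defined up to sign --- completes the argument.
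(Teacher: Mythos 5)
The easy inclusion $k_\rho\subseteq k_c$ in your plan is correct and is essentially what the paper does: the natural cocycle is written in the Ptolemy coordinates, so $\rho$ is defined over $k_c$; observing that the $T$-action is the residual diagonal gauge, as you do, is a slightly more explicit version of the same point.

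The reverse inclusion $k_c\subseteq k_\rho$ has a genuine gap, and it is precisely where the paper invests all the work. You want to deduce from Fricke--Klein that the pulled-back trace functions generate $\mathcal{O}^T$. That step fails for two reasons. First, $\mathcal{R}\colon P_2^\sigma(\T)_\rd\to\{\text{reps}\}/\Conj$ is not an isomorphism onto its image: even on the generic boundary-non-trivial locus it is a finite map with fiber a torsor over $H^1(\widehat M;\Z/2\Z)$. Trace functions on the character variety are constant on each such fiber, so their pullbacks cannot possibly generate all of $\mathcal{O}^T$; they can only generate the $H^1$-invariant part. The correct consequence is that $\mathcal{O}^T$ is \emph{integral} over the pulled-back trace ring, which at the level of fields only gives that $k_c$ is a finite (a priori degree $2^r$) extension of $k_\rho$, not that they are equal. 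Your attempted repair --- ``moving between preimages only changes each $c_e$ by a sign, so $T$-invariant monomials are unaffected'' --- is not correct: a $T$-invariant monomial $c^w$ with $w\in\Ker(\alpha^*)$ transforms under $\tau\in Z^1(\widehat M;\Z/2\Z)$ by $\prod_e\tau(e)^{w_e}$, which need not be $+1$. That only shows the Ptolemy \emph{field} is well-defined (since $\Q(x)=\Q(-x)$), not that each generator already lies in $k_\rho$; an $H^1$-anti-invariant monomial squares to a trace function but could itself be a square root lying in a quadratic extension. Second, even restricting to $\SL(2,\C)$ where the fiber is a single point, you would still need to know that $\mathcal{R}$ induces an isomorphism on coordinate rings (not merely a bijection on points) to conclude that pulled-back traces generate; nothing in the proposal establishes this.

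The paper closes exactly this gap by an explicit bootstrap rather than by invoking invariant theory. Using Proposition~\ref{prop:FixDecoration} it normalizes so that the Ptolemy coordinates along a tree $G$ with a $1$- or $3$-cycle equal $1$. It then chooses, for each cusp $i$, a peripheral loop $M_i$ with $\gamma(M_i)=\beta(m_i)$ and $m_i\neq 0$ (this is exactly where boundary-non-triviality is used), and computes traces of explicit loops made of edges of $\T$ and peripheral arcs. Lemma~\ref{lemma:BootStrapEdge} and Lemma~\ref{lemma:connectTwoCusp} give $m_ic_\varepsilon\in k_\rho$ and $m_im_j\in k_\rho$, Lemma~\ref{lemma:AllMiInKRho} pushes this through the tree to get all $m_i\in k_\rho$, and the finite-difference Lemma~\ref{lemma:Deltas} combined with the trace polynomial Lemma~\ref{lemma:TraceFormula} isolates the top-degree term $m_i\cdots m_j\,c_{i+1}\cdots c_j\,c_\varepsilon$ from a family of loop traces, forcing $c_\varepsilon\in k_\rho$. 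This constructive route is what pins down equality of fields rather than mere algebraicity, and it has no analogue in the Fricke--Klein plan.
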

\begin{remark}
For a cusped hyperbolic $3$-manifold the \emph{shape field} is in general smaller than the trace field. The shape field equals the \emph{invariant trace field} (see, e.g.,~\cite{MaclachlanReid}).
\end{remark}
For computations of the Ptolemy field, we need an explicit description of the ring of invariants $\mathcal O^T$, or equivalently, the reduced Ptolemy variety $P^\sigma_2(\T)_\rd$.
\begin{proposition}\label{prop:FixDecoration}
There exist $1$-cells $\varepsilon_1,\dots,\varepsilon_v$ of $\T$ such that the reduced Ptolemy variety $P_2^{\sigma}(\T)_{\rd}$ is naturally isomorphic to the subvariety of $P^\sigma_2(\T)$ obtained by intersecting with the affine hyperplane $c_{\varepsilon_1}=\dots=c_{\varepsilon_v}=1$. 
\end{proposition}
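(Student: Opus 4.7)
The $T=(\C^*)^v$-action on $P^\sigma_2(\T)$ scales the Ptolemy coordinate $c_e$ of a $1$-cell $e$ by $\prod_j x_j^{A_{e,j}}$, where the integer matrix $A$ has entry $A_{e,j}$ equal to the number of endpoints of $e$ lying on the boundary component $B_j$. The plan is to select $v$ edges $\varepsilon_1,\dots,\varepsilon_v$ whose weight vectors form a $v\times v$ submatrix $B=(A_{\varepsilon_i,j})_{i,j}$ with two properties: (i) $\det B\neq 0$, and (ii) the $\Z$-span of the rows of $B$ equals the $\Z$-span of all rows of $A$. I will call such a collection \emph{admissible}.

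Given an admissible choice, I would verify the isomorphism at the level of coordinate rings by showing that $\mathcal{O}^T\to\mathcal{O}/(c_{\varepsilon_1}-1,\dots,c_{\varepsilon_v}-1)$ is both injective and surjective. For injectivity: for any $c\in P^\sigma_2(\T)$ the equations $\prod_j x_j^{B_{ij}}c_{\varepsilon_i}=1$ are solvable for $x\in T$ by (i), so every $T$-orbit meets the slice; any two solutions differ by an element of $\ker(B)$, which by (ii) coincides with the kernel of the $T$-action on $P^\sigma_2(\T)$ and hence acts trivially. Thus the slice contains a unique representative of each orbit, and a $T$-invariant function vanishing there is zero. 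For surjectivity: any monomial $m=\prod_e c_e^{a_e}$ has $T$-weight $w_m\in\Z^v$ lying in the row lattice of $A$, which by (ii) equals that of $B$; writing $w_m=-\sum_i b_i B_{i,\cdot}$ with $b_i\in\Z$, the adjusted monomial $m\cdot\prod_i c_{\varepsilon_i}^{b_i}$ is $T$-invariant and restricts to $m$ modulo the ideal.

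The main obstacle is the combinatorial existence of an admissible collection. My plan is a greedy selection: at step $i$, pick $\varepsilon_i$ to be an edge whose weight vector in $A$ either increases the rank of the sublattice generated by $\varepsilon_1,\dots,\varepsilon_{i-1}$, or reduces the index of that sublattice inside the full row lattice of $A$. Termination after exactly $v$ steps follows from the fact that the row lattice of $A$ has rank $v$ (the finite-kernel assumption on the $T$-action) combined with the non-negative integer structure of the rows, each of which sums to $2$ with entries in $\{0,1,2\}$. A case analysis separating ``loop'' edges (both endpoints on the same $B_j$) from ``bridge'' edges (endpoints on distinct $B_j,B_k$) shows that some row of $A$ always enlarges the currently generated sublattice as required, so the greedy procedure never stalls short of the full row lattice. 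The algebraic verification of the isomorphism then follows as in the second paragraph.
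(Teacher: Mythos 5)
Your overall strategy matches the paper's: define a good collection of $v$ edges (your ``admissible'' condition is precisely the paper's notion of basic generators for $\alpha^*$), verify that fixing those Ptolemy coordinates to $1$ gives the slice isomorphism, and then prove such a collection exists. The algebraic verification in your second paragraph is sound and is a mildly rephrased version of the paper's Proposition~\ref{prop:ReducedPtolemyVariety} (the paper works directly with the splitting $C_1\cong\Ker(\alpha^*)\oplus\Span(\varepsilon_{i_1},\dots,\varepsilon_{i_v})$ and Corollary~\ref{cor:RingOfInvs}; you argue orbit--slice uniqueness and monomial adjustment, which is the same content).

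The gap is in the existence argument, which is in fact the entire content of Proposition~\ref{prop:Basic} and Section~\ref{sec:AlphaStarBasic} of the paper. First, the greedy scheme as stated does not obviously terminate in exactly $v$ steps: a step that ``reduces the index'' without increasing the rank consumes one of your $v$ picks without making rank progress, so you could reach $v$ edges with a full-rank but strictly smaller sublattice. Second, and more importantly, the asserted ``case analysis separating loop edges from bridge edges'' is not carried out, and this is exactly where the work lies. It is \emph{not} generically true that $v$ rows of a rank-$v$ integer matrix generate its row lattice; one needs the specific combinatorial structure of $\alpha^*$. The paper uses Neumann's result that $\alpha^*$ has cokernel of order $2$, which turns the problem into finding $v$ columns of $\alpha^*$ whose determinant is $\pm 2$, and then exhibits these explicitly: for one cusp the $1\times 1$ matrix $(2)$; for multiple cusps with a self-edge, that self-edge together with the edges of a maximal tree (determinant $\pm 2$ via the incidence-matrix Lemma~\ref{lemma:IncidenceMatrix}); and for multiple cusps with no self-edge, the three edges of a face extended by a maximal tree. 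Your proposal needs to either reproduce a construction of this kind, or at least invoke the cokernel-order-$2$ fact and then actually carry out the case analysis showing that $v$ suitable rows exist; as written, the existence of an admissible collection is asserted rather than proved.
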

\begin{corollary}
Let $c\in P^\sigma_2(\T)_\rd$. Under an isomorphism as in Proposition~\ref{prop:FixDecoration}, the Ptolemy field of $c$ is the field generated by the Ptolemy coordinates.\qed 
\end{corollary}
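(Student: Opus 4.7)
The proof should be essentially a bookkeeping exercise once Proposition~\ref{prop:FixDecoration} is in hand, so my plan is to assemble the two ingredients and check that they fit together. The main idea is that after slicing, the coordinate ring of $P^\sigma_2(\T)_{\rd}$ is realized concretely as a quotient of a polynomial ring in the \emph{remaining} Ptolemy coordinates, so the values of $T$-invariant polynomials at $c$ generate exactly the same field as the values of the coordinate functions themselves.

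First I would invoke Proposition~\ref{prop:FixDecoration} to identify $P^\sigma_2(\T)_{\rd}$ with the slice $S\subset P^\sigma_2(\T)$ cut out by $c_{\varepsilon_1}=\dots=c_{\varepsilon_v}=1$. Under this identification, the ring of invariants $\mathcal O^T$ is isomorphic, as a $\Q$-algebra, to the coordinate ring $\Q[S]$, which in turn is a quotient of the polynomial ring $\Q[c_e:e\notin\{\varepsilon_1,\dots,\varepsilon_v\}]$ by the (restricted) Ptolemy relations. In particular, $\Q[S]$ is generated as a $\Q$-algebra by the coordinate functions $c_e$.

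Next I would apply the general fact that for a point $c\in S$ and any $\Q$-algebra $R$ generated by elements $r_1,\dots,r_n$ together with an evaluation homomorphism $\mathrm{ev}_c:R\to\C$, one has $\Q(\mathrm{ev}_c(R))=\Q(\mathrm{ev}_c(r_1),\dots,\mathrm{ev}_c(r_n))$. Applying this to $R=\mathcal O^T\cong \Q[S]$ with generators the coordinate functions $c_e$, the left-hand side is by definition the Ptolemy field $k_c$, while the right-hand side is the field generated over $\Q$ by the values of the Ptolemy coordinates at $c$ (the coordinates $c_{\varepsilon_i}=1$ contribute nothing).

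There is essentially no obstacle here; the only point requiring a moment of care is that we use the \emph{algebraic} isomorphism of coordinate rings from Proposition~\ref{prop:FixDecoration} (not merely a set-theoretic bijection of varieties), so that a generating set of $\Q[S]$ really pulls back to a generating set of $\mathcal O^T$. Once that is noted, the equality $k_c=\Q(c_1,\dots,c_e)$ on the slice is immediate.
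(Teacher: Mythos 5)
Your proof is correct and follows essentially the same route the paper takes implicitly: identify $P^\sigma_2(\T)_\rd$ with the slice $S=\{c_{\varepsilon_1}=\dots=c_{\varepsilon_v}=1\}$ via Proposition~\ref{prop:FixDecoration} (equivalently Proposition~\ref{prop:ReducedPtolemyVariety}), observe that the resulting ring isomorphism $\mathcal O^T\cong\mathcal O(S)$ carries the monomial generators $c^{w_i}$ of $\mathcal O^T$ to elements of the ring generated by the coordinate functions on $S$, and conclude that the two evaluation images generate the same subfield of $\C$. Your emphasis on using the isomorphism of coordinate rings (rather than a mere set-theoretic bijection), and on the fixed coordinates contributing nothing, is exactly the bookkeeping the paper compresses into the inline observation~\eqref{eq:kc}.
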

\begin{remark} 
A concrete method for selecting $1$-cells as in Proposition~\ref{prop:FixDecoration} is described in Section~\ref{sec:AlphaStarBasic}.
\end{remark}
Analogues of our results for higher rank Ptolemy varieties are discussed in Section~\ref{sec:HigherPtolemy}. The analogue of Proposition~\ref{prop:GenericBoundaryNonTrivial} holds for representations that are \emph{boundary-non-degenerate} (see Definition~\ref{def:BoundaryNonDeg}), and the analogue of Proposition~\ref{prop:FixDecoration} leads to a simple computation of the Ptolemy field. 
\begin{conjecture}
The Ptolemy field of a boundary-non-degenerate, generic, boundary-unipotent representation $\rho$ in $\SL(n,\C)$ or $\PSL(n,\C)$ is equal to its trace field.\qed
\end{conjecture}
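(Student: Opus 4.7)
The plan is to mirror the strategy of Theorem~\ref{thm:PtolemyFieldEqualsTraceField} while replacing each ingredient with its higher-rank analogue. Write $k_c$ for the Ptolemy field of $c\in P_n^{\sigma}(\T)$ and $k_\rho$ for the trace field of $\rho=\mathcal R(c)$. I would first reduce the $\PSL(n,\C)$ statement to $\SL(n,\C)$ via a choice of lift compatible with $\sigma$, as in~\eqref{eq:1to1Obstruction}, and then establish the two inclusions $k_\rho\subseteq k_c$ and $k_c\subseteq k_\rho$ separately.

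The inclusion $k_\rho\subseteq k_c$ should be essentially formal. By definition of the natural cocycle, every matrix $\rho(\gamma)$ has entries that are rational functions in the Ptolemy coordinates $c_\varepsilon$. The diagonal action of $T$ corresponds to rescaling each decoration $g_iP$ within its $P$-coset, which leaves $\rho$ unchanged, while the global left $\SL(n,\C)$-action on decorations produces exactly the conjugation ambiguity of $\rho$. Consequently each $\Tr(\rho(\gamma))$ is a $T$-invariant regular function on $P_n^{\sigma}(\T)$, hence an element of $\mathcal O^T$, and its value at $c$ lies in $k_c$.

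For the reverse inclusion, I would invoke the higher-rank analogue of Proposition~\ref{prop:FixDecoration} (alluded to in Section~\ref{sec:AlphaStarBasic}) to realize $P_n^{\sigma}(\T)_\rd$ as an affine slice where the Ptolemy coordinates along a chosen collection of $1$-cells are set to $1$. By the corollary following Proposition~\ref{prop:FixDecoration}, the Ptolemy field on this slice is generated by the remaining Ptolemy coordinates, each of which is a determinantal minor built from the decoration flags $g_iP$. Along a spanning tree of edges from a base vertex, each $g_i$ is reconstructed, up to the residual $P$-ambiguity killed by the slice, as a product of natural-cocycle matrices attached to tree edges. Hence every slice Ptolemy coordinate is a polynomial in finitely many matrix entries of words in $\rho(\gamma_1),\ldots,\rho(\gamma_N)$. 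The Procesi--Razmyslov theorem then identifies the ring of simultaneous $\Conj$-invariants of such tuples with the ring generated by traces of words, so each slice Ptolemy coordinate, being already $\Conj$-invariant, is a rational function in traces and therefore lies in $k_\rho$.

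The main obstacle is to make this last step effective and to handle denominators. Cayley--Hamilton and Newton-type identities do produce, in principle, explicit formulas expressing matrix entries (and thereby minors) as rational functions in traces of products, but for $n\geq 3$ these denominators are themselves polynomials in traces that must be shown to be nonzero on the locus of boundary-non-degenerate generic representations. The boundary-non-degeneracy hypothesis, introduced in Definition~\ref{def:BoundaryNonDeg} as the higher-rank analogue of boundary-non-triviality, is designed precisely to force the relevant flag-separating minors to be nonvanishing; verifying this rigorously and uniformly in $n$ is, I expect, the bulk of the work and the reason the statement is phrased as a conjecture rather than a theorem.
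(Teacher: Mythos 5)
The statement you were given is presented in the paper as an open \emph{conjecture}; the paper offers no proof of it, so there is no argument to compare against. Your inclusion $k_\rho\subseteq k_c$ is correct and mirrors the observation the paper makes for $n=2$: the natural cocycle, and hence $\rho$, is defined over the Ptolemy field. Your plan for the reverse inclusion, routed through Procesi--Razmyslov invariant theory, is quite different in spirit from the paper's proof of the $n=2$ case (Theorem~\ref{thm:PtolemyEqTrace}), which is entirely explicit: one normalizes the Ptolemy coordinates along a tree with $1$- or $3$-cycle, exhibits concrete loops, computes their traces as products of $\alpha(\cdot),\beta(\cdot)$ matrices, and isolates individual Ptolemy coordinates via the finite-difference device of Lemmas~\ref{lemma:Deltas} and~\ref{lemma:TraceFormula}. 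That explicit route avoids denominator control but requires $n$-specific trace combinatorics; your abstract route would handle all $n$ at once but, as you say, pushes all the difficulty into nonvanishing statements.

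There is also a gap in the middle of your sketch that is not subsumed by the denominator issue. You assert that each $g_i$ is reconstructed, up to the residual ambiguity killed by the slice, as a product of natural-cocycle matrices along tree edges, and conclude that the slice Ptolemy coordinates are polynomials in matrix entries of words in $\rho(\gamma_1),\ldots,\rho(\gamma_N)$. But the natural-cocycle matrices on tree edges are groupoid elements defined in terms of the Ptolemy coordinates themselves; a tree-edge path is not a loop, so these matrices are not in $\im(\rho)$, and no such product is a word in the $\rho(\gamma_k)$. That reconstruction expresses $g_i$ in terms of $c$, which is the direction you already have; it does not express $g_i$, or $c_\varepsilon$, in terms of $\rho$ alone. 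What your argument actually needs is the reconstruction that boundary-non-degeneracy (Definition~\ref{def:BoundaryNonDeg}) is designed to furnish: each flag $g_iN$ is the unique invariant flag of a regular unipotent element of $\rho(\Stab(\widetilde e_i))$, hence recoverable by rational formulas in the matrix entries of $\rho$; combined with a rational solution of the slice normalization, this would exhibit the slice Ptolemy coordinates as $\Conj$-invariant rational functions on the boundary-non-degenerate generic locus, at which point the invariant-theoretic machinery could take over. Making this precise, and verifying that the resulting denominators do not vanish at $\rho$, is exactly where the conjecture stands; your sketch correctly anticipates the latter difficulty but elides the former.
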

\begin{remark}
The computation of reduced Ptolemy varieties is remarkably efficient using Magma~\cite{Magma}.
For all but a few census manifolds, primary decompositions of the (reduced) Ptolemy varieties $P^\sigma_2(\T)$ can be computed in a fraction of a second on a standard laptop. A database of representations can be found at \url{http://ptolemy.unhyperbolic.org/}~\cite{Unhyperbolic}.
All our tools have been incorporated into SnapPy~\cite{SnapPy} by the second author and the Ptolemy fields obtained through:

\begin{verbatim}
>>> from snappy import Manifold
>>> p=Manifold("m019").ptolemy_variety(2,'all')
>>> p.retrieve_solutions().number_field()
... [[x^4 - 2*x^2 - 3*x - 1], [x^4 + x - 1]]
 \end{verbatim}
 
The number fields are grouped by obstruction class. In this example, we see that the Ptolemy variety for the non-trivial obstruction class has a component with number field $x^4+x-1$, which is the trace field of m019. The above code retrieves a precomputed decomposition of the Ptolemy variety from~\cite{Unhyperbolic}.
In sage or SnapPy with Magma installed, you can use \texttt{p.compute\_solutions().number\_field()} to compute the decomposition.
\end{remark}

\section{Notation}\label{sec:Notation}
\subsection{Triangulations}
Let $M$ be a compact oriented $3$-manifold with (possibly empty) boundary. We refer to the boundary components as \emph{cusps} (although they may not be tori). Let $\widehat M$ be the space obtained from $M$ by collapsing each boundary component to a point.
\begin{definition}
A (concrete) \emph{triangulation} of $M$ is an identification of $\widehat M$ with a space obtained from a collection of simplices by gluing together pairs of faces by affine homeomorphisms. For each simplex $\Delta$ of $\T$ we fix an identification of $\Delta$ with a standard simplex.  
\end{definition}
\begin{remark}
By drilling out disjoint balls if necessary (this does not change the fundamental group), we may assume that the triangulation of $M$ is \emph{ideal}, i.e.~that each $0$-cell corresponds to a boundary-component of $M$. For example, we regard a triangulation of a closed manifold as an ideal triangulation of a manifold with boundary a union of spheres.
\end{remark}
\begin{definition}
A triangulation is \emph{oriented} if the identifications with standard simplices are orientation preserving.
\end{definition}
\begin{remark}
All of the triangulations in the SnapPy censuses \texttt{OrientableCuspedCensus}, \texttt{LinkExteriors} and \texttt{HTLinkExteriors}~\cite{SnapPy} are oriented. Unless otherwise specified we shall assume that our triangulations are oriented.
\end{remark}

A triangulation gives rise to a triangulation of $M$ by truncated simplices, and to a triangulation of $\Mtildehat$.

\subsection{Miscellaneous}
\begin{itemize}
\item The number of vertices, edges, faces, and simplices, of a triangulation $\T$ are denoted by $v$, $e$, $f$, and $s$, respectively.
\item The standard basic vectors in $\Z^k$ are denoted by $e_1,\dots,e_{k}$. 
\item The (oriented) edge of simplex $k$ from vertex $i$ to $j$ is denoted by $\varepsilon_{ij,k}$. 
\item The matrix groups $\left\{\left(\begin{smallmatrix}1&x\\0&1\end{smallmatrix}\right)\right\}$ and $\left\{\left(\begin{smallmatrix}a&x\\0&a^{-1}\end{smallmatrix}\right)\right\}$ are denoted by $P$ and $B$, respectively. The higher rank analogue of $P$ is denoted by $N$.
\item A representation is \emph{boundary-parabolic} if it takes each peripheral subroup to a conjugate of $P$. Such is also called a \emph{$(G,P)$-representation} ($G=\SL(2,\C)$ or $\PSL(2,\C)$). In the higher rank case, such a representation is called boundary-unipotent.
\item A triangulation is \emph{ordered} if $\varepsilon_{ij,k}\sim \varepsilon_{i^\prime j^\prime,k^\prime}$ implies that $i<j\iff i^\prime<j^\prime$.
\end{itemize}

\section{The Ptolemy varieties}\label{sec:PtolemyVariety}
We define the Ptolemy variety for $n=2$ following Garoufalidis--Thurston--Zickert~\cite{GaroufalidisThurstonZickert} (see also Garoufalidis--Goerner--Zickert~\cite{GaroufalidisGoernerZickert}).
\subsection{The $\SL(2,\C)$-Ptolemy variety}
Assign to each oriented edge $\varepsilon_{ij,k}$ of $\Delta_k\in \T$ a \emph{Ptolemy coordinate $c_{ij,k}$}. Consider the affine algebraic set $A$ defined by the \emph{Ptolemy relations}
\begin{equation}\label{eq:PtolemyRelationijk}
c_{03,k}c_{12,k}+c_{01,k}c_{23,k}=c_{02,k}c_{13,k}, \qquad k=1,2,\dots,t,
\end{equation}
the \emph{identification relations}
\begin{equation}\label{eq:IdentificationRelations}
c_{ij,k}=c_{i^\prime j^\prime,k^\prime}\qquad \text{when}\qquad \varepsilon_{ij,k}\sim\varepsilon_{i^\prime j^\prime,k^\prime},
\end{equation}
and the \emph{edge orientation relations} $c_{ij,k}=-c_{ji,k}$. By only considering $i<j$, we shall always eliminate the edge orientation relations.
\begin{definition}\label{def:PtolemyVariety} The \emph{Ptolemy variety} $P_2(\T)$ is the Zariski open subset of $A$ consisting of points with non-zero Ptolemy coordinates.
\end{definition}
\begin{remark} One can concretely obtain $P_2(\T)$ from $A$ by adding a dummy variable $x$ and a dummy relation $x\prod c_{ij,k}=1$.
\end{remark}

\begin{remark}\label{rm:PickRepresentative} We can eliminate the identification relations~\eqref{eq:IdentificationRelations} by selecting a representative for each edge cycle. This gives an embedding of the Ptolemy variety in an ambient space $\C^e$, where it is cut out by $s$ Ptolemy relations, one for each simplex. Note that when all boundary components are tori, $s=e$.
\end{remark}

\subsubsection{The figure-$8$ knot}
Consider the ideal triangulation of the figure-$8$ knot complement shown in Figure~\ref{fig:Fig8Ordered}. The Ptolemy variety $P_2(\T)$ is given by
\begin{equation}\label{eq:Fig8PtolemyUnsimplified}
\begin{aligned}[c]
c_{03,0}c_{12,0}+c_{01,0}c_{23,0}&=c_{02,0}c_{13,0},\\
c_{03,1}c_{12,1}+c_{01,1}c_{23,1}&=c_{02,1}c_{13,1},
\end{aligned}
\qquad 
\begin{aligned}[c]
c_{02,0}&=c_{12,0}=c_{13,0}=c_{01,1}=c_{03,1}=c_{23,1}\\
c_{01,0}&=c_{03,0}=c_{23,0}=c_{02,1}=c_{12,1}=c_{13,1}.
\end{aligned}
\end{equation}
By selecting representatives $\varepsilon_{23,0}$ and $\varepsilon_{13,0}$ for the $2$ edge cycles, $P_2(\T)$ embeds in $\C^2$ where it is given by
\begin{equation}\label{eq:Fig8PtolemyTrivial}
c_{23,0}c_{13,0}+c_{23,0}^2=c_{13,0}^2,\qquad c_{13,0}c_{23,0}+c_{13,0}^2=c_{23,0}^2.
\end{equation}
It follows that $P_2(\T)$ is empty, which is no surprise, since the only boundary-parabolic $\SL(2,\C)$-representations of the figure-$8$ knot are abelian. To detect the geometric representation, we need to consider \emph{obstruction classes} (see Section~\ref{sec:ObstructionClasses} below).

\subsubsection{The figure-$8$ knot sister}
Consider the ideal triangulation of the figure-$8$ knot sister shown in Figure~\ref{fig:Fig8SisOri}. The Ptolemy variety $P_2(\T)$ is given by
\begin{equation}\label{eq:Fig8SisPtolemyUnsimplified}
\begin{aligned}[c]
c_{03,0}c_{12,0}+c_{01,0}c_{23,0}&=c_{02,0}c_{13,0},\\
c_{03,1}c_{12,1}+c_{01,1}c_{23,1}&=c_{02,1}c_{13,1},
\end{aligned}
\qquad 
\begin{aligned}[c]
c_{01,0}=-c_{03,0}=c_{23,0}=-c_{01,1}=c_{03,1}=-c_{23,1}\\
c_{02,0}=-c_{12,0}=c_{13,0}=-c_{02,1}=c_{12,1}=-c_{13,1}
\end{aligned}
\end{equation}
Selecting representatives $\varepsilon_{23,0}$ and $\varepsilon_{13,0}$ for the $2$ edge cycles, $P_2(\T)\in\C^2$ is given by
\begin{equation}\label{eq:Fig8SisPtolemyTrivial}
c_{23,0}c_{13,0}+c_{23,0}^2=c_{13,0}^2,\qquad c_{23,0}c_{13,0}+c_{23,0}^2=c_{13,0}^2.
\end{equation}
This is equivalent to
\begin{equation}
x^2-x-1=0,\qquad x=\frac{c_{13,0}}{c_{23,0}}.
\end{equation}


\begin{figure}[htpb]
\centering
\begin{minipage}[c]{0.48\textwidth}
\scalebox{0.57}{\input{figures_gen/Fig8Triangulation.tex}}
\captionsetup{width=10cm}
\caption{Ordered triangulation of the figure $8$ knot. The signs indicate the non-trivial obstruction class.}\label{fig:Fig8Ordered}
\end{minipage}	
\hspace{2mm}
\begin{minipage}[c]{0.48\textwidth}
\scalebox{0.57}{\input{figures_gen/Fig8SisterTriangulation.tex}}
\captionsetup{width=10cm}
\caption{Oriented triangulation of the figure $8$ knot sister. The signs indicate the non-trivial obstruction class.}\label{fig:Fig8SisOri}
\end{minipage}					
\end{figure}
\begin{remark}
Note that for ordered triangulations, the identification relations~\eqref{eq:IdentificationRelations} do not involve minus signs. The triangulation in Figure~\ref{fig:Fig8Ordered} is not oriented.

\end{remark}
\subsection{Obstruction classes}\label{sec:ObstructionClasses}
Each class in $H^2(\widehat M;\Z/2\Z)$ can be represented by a $\Z/2\Z$-valued $2$-cocycle on $\widehat M$, i.e.~an assignment of a sign to each face of $\T$.  
\begin{definition} Let $\sigma$ be a $\Z/2\Z$-valued $2$-cocycle on $\widehat M$. The \emph{Ptolemy variety} for $\sigma$ is defined as in Definition~\ref{def:PtolemyVariety}, but with the Ptolemy relation replaced by
\begin{equation}\label{eq:PtolemyRelationObstruction}
\sigma_{0,k}\sigma_{3,k}c_{03,k}c_{12,k}+\sigma_{0,k}\sigma_{1,k}c_{01,k}c_{23,k}=\sigma_{0,k}\sigma_{2,k}c_{02,k}c_{13,k},
\end{equation}
where $\sigma_{i,k}$ is the sign of the face of $\Delta_k$ opposite vertex $i$.
\end{definition}
\begin{remark} Multiplying $\sigma$ by a coboundary $\delta(\tau)$ corresponds to multiplying the Ptolemy coordinate of a one-cell $e$ by $\tau(e)$ (see~\cite{GaroufalidisThurstonZickert} for details). Hence, up to canonical isomorphism, the Ptolemy variety $P^\sigma_2(\T)$ only depends on the cohomology class of $\sigma$. The Ptolemy variety $P_2(\T)$ is the Ptolemy variety for the trivial obstruction class.
\end{remark}

\subsubsection{Examples}
In both examples above, $H^2(\widehat M;\Z/2\Z)=\Z/2\Z$, and the non-trivial obstruction class $\sigma$ is indicated in Figures~\ref{fig:Fig8Ordered} and~\ref{fig:Fig8SisOri}.

For the Figure-$8$ knot, $P^{\sigma}_2(\T)$ is given by
\begin{equation}\label{eq:Fig8PtolemyNontrivial}
-c_{23,0}c_{13,0}+c_{23,0}^2=-c_{13,0}^2,\qquad -c_{13,0}c_{23,0}+c_{13,0}^2=-c_{23,0}^2,
\end{equation}
which is equivalent to
\begin{equation}
x^2-x+1=0,\qquad x=\frac{c_{13,0}}{c_{23,0}}.
\end{equation} 
The corresponding representations are the geometric representation and its conjugate.

For the Figure 8 knot sister, the Ptolemy variety becomes
\begin{equation}\label{eq:Fig8SisPtolemyNontrivial}
-c_{23,0}c_{13,0}-c_{23,0}^2=c_{13,0}^2,\qquad -c_{23,0}c_{13,0}-c_{23,0}^2=c_{13,0}^2,
\end{equation}
which is equivalent to
\begin{equation}
x^2+x+1=0,\qquad x=\frac{c_{13,0}}{c_{23,0}}.
\end{equation}

\section{The diagonal action}
Fix an ordering of the $1$-cells of $\T$ and of the cusps of $M$. As mentioned in Remark~\ref{rm:PickRepresentative} the Ptolemy variety can be regarded as a subset of the ambient space $\C^e$.

Let $T=(\C^*)^v$ be a torus whose coordinates are indexed by the cusps of $M$. There is a natural action of $T$ on $P^{\sigma}_2(\T)$ defined as follows:
For $x=(x_1,\dots,x_v)\in T$ and $c=(c_1,\dots c_e)\in P^{\sigma}_2(\T)$ define a Ptolemy assignment $cx$ by
\begin{equation}
(xc)_i=x_jx_kc_i,
\end{equation}
where $j$ and $k$ (possibly $j=k$) are the cusps joined by the $i$th edge cycle. The action is thus determined entirely by the $1$-skeleton of $\widehat M$.

\begin{remark}\label{rm:IntrinsicDiagonalAction} There is a more intrinsic definition of this action in terms of decorations: 
Each vertex of $\Mtildehat$ determines a cusp of $M$, and if $D$ is a decoration taking a vertex $w$ to $gP$, the decoration $xD$ takes $w$ to $g\left(\begin{smallmatrix}x_i&0\\0&x_i^{-1}\end{smallmatrix}\right)P$ where $i$ is the cusp determined by $w$. The fact that the two definitions agree under the one-one correspondence \eqref{eq:1to1Obstruction} is an immediate consequence of the relationship given in Figure~\ref{fig:DecToPtolemy}.
\end{remark}

\subsection{The reduced Ptolemy varieties}
\begin{definition}
The \emph{reduced Ptolemy variety} $P^\sigma_2(\T)_\rd$ is the quotient $P^\sigma_2(\T)\big/T$. 
\end{definition}
Let $\mathcal O$ be the coordinate ring of $P^\sigma_2(\T)$, and let $\mathcal O^T$ be the ring of invariants. By geometric invariant theory, the reduced Ptolemy variety is a variety whose coordinate ring is isomorphic to $\mathcal O^T$.

For $i=0,1$, let $C_i$ denote the free abelian group generated by the \emph{unoriented} $i$-cells of $\widehat M$, and consider the maps (first studied by Neumann~\cite{NeumannComb})
\begin{equation}\label{eq:AlphaAlphaStar}
\alpha\colon C_0\to C_1, \qquad \alpha^*\colon C_1\to C_0,
\end{equation}
where $\alpha$ takes a $0$-cell to a sum of its incident $1$-cells, and $\alpha^*$ takes a $1$-cell to the sum of its end points. The maps $\alpha$ and $\alpha^*$ are dual under the canonical identifications $C_i\cong C_i^*$. Also, $\alpha$ is injective, and $\alpha^*$ has cokernel of order $2$ (see \cite{NeumannComb}.)


The following is an elementary consequence of the definition of the diagonal action.
\begin{lemma}
The diagonal action $P_2^\sigma(\T)$, and the induced action on the coordinate ring $\mathcal O$ of $P_2^\sigma(\T)$ are given, respectively, by
\begin{equation}\label{eq:TActionOnPtolemys}
(xc)_i=(\prod_{j=1}^v x_j^{\alpha_{ij}})c_i, \qquad
x(c^w)=\prod_{j=1}^v x_j^{\alpha^*(w)_j}c^w,
\end{equation}
where $c^w$ is the monomial $c_1^{w_1}\cdots c_e^{w_e}\in\mathcal O$, $w\in\Z^e$.\qed
\end{lemma}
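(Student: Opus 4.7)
The statement is essentially a translation of the diagonal action into the linear-algebraic language of $\alpha$ and $\alpha^*$, so the plan is to unpack definitions carefully and then use the duality $\langle \alpha(e_j),e_i\rangle=\langle e_j,\alpha^*(e_i)\rangle$ under the canonical bases of $C_0$ and $C_1$.

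First I would fix the convention on the matrix entry $\alpha_{ij}$, namely $\alpha_{ij}=\langle \alpha(e_j),e_i\rangle$, which by the definition of $\alpha$ counts how many times the $j$-th $0$-cell appears as an endpoint of the $i$-th (unoriented) $1$-cell. Thus $\alpha_{ij}\in\{0,1,2\}$: it is $2$ if both endpoints of the $i$-th edge are at cusp $j$, $1$ if exactly one endpoint is, and $0$ otherwise. With this convention, the defining formula $(xc)_i=x_jx_kc_i$ (where $j,k$ are the cusps joined by the $i$-th edge cycle, possibly $j=k$) coincides termwise with $\prod_{j=1}^v x_j^{\alpha_{ij}}c_i$, handling both the $j\neq k$ and $j=k$ cases simultaneously. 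This establishes the first identity essentially by reading off the definition.

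For the second identity, I would recall the convention that $T$ acts on the coordinate ring $\mathcal{O}$ by $(xf)(c)=f(xc)$, so that the coordinate function $c_i$ is sent to the function $c\mapsto (xc)_i=\prod_j x_j^{\alpha_{ij}}c_i$. Since $c^w=c_1^{w_1}\cdots c_e^{w_e}$ is a monomial, I multiply these over $i$ with weights $w_i$ and get
\begin{equation*}
x(c^w)=\prod_{i=1}^e (xc)_i^{w_i}=\prod_{i=1}^e\prod_{j=1}^v x_j^{\alpha_{ij}w_i}\,c_i^{w_i}=\prod_{j=1}^v x_j^{\sum_i \alpha_{ij}w_i}\,c^w.
\end{equation*}
It remains to identify the exponent $\sum_i \alpha_{ij}w_i$ with $\alpha^*(w)_j$. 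This is exactly the duality between $\alpha$ and $\alpha^*$: under the identifications $C_i\cong C_i^*$ by the canonical bases, the matrix of $\alpha^*\colon C_1\to C_0$ is the transpose of the matrix of $\alpha\colon C_0\to C_1$, so $\alpha^*(w)_j=\sum_i \alpha_{ij}w_i$, finishing the computation.

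There is no real obstacle here; the only thing one must be careful about is the sign/orientation conventions. Since $\alpha$ and $\alpha^*$ are defined using \emph{unoriented} $1$-cells (as stated just before the lemma), no sign bookkeeping enters, and the formulas reduce to bare counting of endpoint incidences. The lemma is therefore a direct consequence of the definitions together with the fact that $\alpha$ and $\alpha^*$ are mutually transposed with respect to the canonical bases.
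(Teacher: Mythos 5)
Your proof is correct and fills in exactly the routine details that the paper omits (the lemma is stated with a terminal \qed and no proof, being labeled an ``elementary consequence'' of the definition of the diagonal action). You correctly identify that the only content is (i) reading off the matrix $\alpha_{ij}$ as the endpoint-incidence count, so that $x_jx_k = \prod_j x_j^{\alpha_{ij}}$ with the self-edge case $j=k$ giving $x_j^2$ automatically, and (ii) pushing this through a monomial and recognizing $\sum_i \alpha_{ij}w_i = \alpha^*(w)_j$ via the transpose duality stated in the paper just before the lemma. One small remark on conventions: you take the action on $\mathcal O$ to be $(xf)(c)=f(xc)$ rather than the pullback-by-inverse $(xf)(c)=f(x^{-1}c)$; this is what reproduces the paper's sign-free formula, and since $T$ is a torus and only the invariant ring $\mathcal O^T$ is ever used, the two conventions are interchangeable -- but it is worth being aware that you have implicitly chosen the one the paper uses. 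No gap.
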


\begin{corollary}\label{cor:RingOfInvs}
Let $w_1,\dots,w_{e-v}$ be a basis for $\Ker(\alpha^*)$. The monomials
$c^{w_1},\dots,c^{w_{e-v}}$ generate $\mathcal O^T$.
\qed
\end{corollary}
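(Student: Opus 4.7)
The plan is to exploit the fact that the diagonal action of $T$ is by a multigrading, so that $\mathcal O^T$ decomposes as the span of invariant monomials, and then identify the invariant monomials with $\ker(\alpha^*)$.

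First I would observe that, since $P^\sigma_2(\T)$ is the Zariski-open locus in $\C^e$ where all $c_i\neq 0$, the coordinate ring $\mathcal O$ is naturally a quotient of the Laurent polynomial ring $R = \C[c_1^{\pm 1},\dots,c_e^{\pm 1}]$ by the ideal $I$ generated by the Ptolemy relations \eqref{eq:PtolemyRelationObstruction}. By the preceding lemma, the $T$-action on $R$ coincides with the $\Z^v$-multigrading in which the Laurent monomial $c^w$ has degree $\alpha^*(w)$. The key structural point is that $I$ is homogeneous for this grading: the three terms $c_{03,k}c_{12,k}$, $c_{01,k}c_{23,k}$, $c_{02,k}c_{13,k}$ appearing in each Ptolemy relation all use the full vertex multiset $\{0,1,2,3\}$ of $\Delta_k$, so they have identical $\alpha^*$-degree (and the obstruction scalars $\sigma_{i,k}$ are just signs that do not affect the grading). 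Consequently, the multigrading descends to $\mathcal O$.

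Second, since $T$ is a torus and therefore linearly reductive, every $T$-invariant element of $\mathcal O$ is a sum of its weight components, so $\mathcal O^T$ is precisely the degree-zero part and equals the image of $R^T$ in $\mathcal O$. On $R$, the $T$-fixed locus is spanned over $\C$ by those Laurent monomials $c^w$ that have $\alpha^*(w) = 0$, i.e.\ by $\{c^w : w \in \ker(\alpha^*)\}$.

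Finally, since $\ker(\alpha^*) \subset \Z^e$ is a free abelian group of rank $e - v$ (consistent with $\alpha^*$ having finite cokernel), any $w \in \ker(\alpha^*)$ expands uniquely as $w = \sum_i n_i w_i$ with $n_i \in \Z$, so $c^w = \prod_i (c^{w_i})^{n_i}$. Because $\mathcal O$ contains the inverses of every $c_i$, each $c^{w_i}$ is a unit, and therefore the monomials $c^{w_1},\dots,c^{w_{e-v}}$ generate $R^T$, hence $\mathcal O^T$, as a $\C$-algebra. The only delicate step is the homogeneity check for the Ptolemy relations; everything after that is formal torus-invariant theory on a Laurent polynomial ring.
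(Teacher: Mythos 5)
Your proof is correct and follows the path the paper implicitly intends: identify the $T$-action with a $\Z^v$-grading via $\alpha^*$, observe that the Ptolemy ideal is homogeneous for this grading, and use linear reductivity of the torus to conclude that $\mathcal O^T$ is the degree-zero component, spanned by the Laurent monomials $c^w$ with $w\in\Ker(\alpha^*)$. The paper states the corollary with no proof, so you are supplying the details it takes for granted; the homogeneity check for the Ptolemy relations (each of the three terms uses the full vertex multiset $\{0,1,2,3\}$, and the obstruction signs are degree-preserving) is the right thing to say explicitly.

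One imprecision at the final step: the fact that each $c^{w_i}$ is a unit in $\mathcal O$ does not put $(c^{w_i})^{-1}$ into the $\C$-subalgebra generated by $c^{w_1},\dots,c^{w_{e-v}}$, so the ``therefore'' in your last sentence does not follow as written. What is true, and what the paper clearly means, is that $c^{\pm w_1},\dots,c^{\pm w_{e-v}}$ generate $\mathcal O^T$ as a $\C$-algebra (equivalently, the $c^{w_i}$ generate it as a Laurent subalgebra): a general invariant monomial $c^w$ has $w=\sum n_i w_i$ with $n_i\in\Z$, and the negative exponents require the inverses explicitly. This makes no difference downstream, since the Ptolemy field $\Q\left(\{p(c)\mid p\in\mathcal O^T\}\right)$ is the same with or without the inverses, but the algebra-generation claim should be phrased in Laurent terms.
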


\subsubsection{Examples}
Suppose the $1$-skeleton of $\widehat M$ looks like in Figure~\ref{fig:Before} (this is in fact the $1$-skeleton of the census triangulation of the Whitehead link complement.) We have
\begin{equation}
\alpha^*=\begin{pmatrix}2&1&1&0\\0&1&1&2\end{pmatrix}
\end{equation}
and the action of $(x_1,x_2)$ on a Ptolemy assignment $c$ is given in Figure~\ref{fig:After}.
\begin{figure}[htpb]
\centering
\begin{minipage}[c]{0.48\textwidth}
\scalebox{0.85}{\input{figures_gen/PtolemyWhBefore.tex}}
\captionsetup{width=10cm}
\caption{Ptolemy assignment.}\label{fig:Before}
\end{minipage}	
\begin{minipage}[c]{0.48\textwidth}
\scalebox{0.85}{\input{figures_gen/PtolemyWhAfter.tex}}
\captionsetup{width=10cm}
\caption{The diagonal action of $(x_1,x_2)$.}\label{fig:After}
\end{minipage}					
\end{figure}
The kernel of $\alpha^*$ is generated by $(0,-2,0,1)^t$ and $(-1,1,1,0)^t$, so we have
\begin{equation}
\mathcal O^T=\langle c_2^{-2}c_4,c_1^{-1}c_2c_3\rangle.
\end{equation}
Also note that in each of the examples in Section~\ref{sec:PtolemyVariety}, $x\in\mathcal O^T$.

For computations we need a more explicit description of the reduced Ptolemy variety.
\begin{definition}
Let $T\colon \Z^n\to\Z^m$ be a homomorphism. We say that $T$ is \emph{basic} if there exists a subset $J$ of $\{e_1,\dots,e_n\}$ such that $T$ maps $\Span(J)$ isomorphically onto the image of $T$. Elements of such a set $J$ are called \emph{basic generators} for $T$.
\end{definition}
 We identify $C_1$ and $C_0$ with $\Z^e$ and $\Z^v$, respectively.
\begin{proposition}\label{prop:Basic}
The map $\alpha^*\colon C_1\to C_0$ is basic.\qed
\end{proposition}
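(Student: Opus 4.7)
The plan is to build a basic set $J$ from a spanning tree on the $1$-skeleton $G$ of $\widehat M$. Since $M$ is connected, so is $G$; and since every tetrahedron of $\T$ contributes a triangular (hence odd) closed path to $G$, possibly degenerating to a loop under edge identifications, the graph $G$ is non-bipartite. Fix a spanning tree $T$ of $G$ (giving $v-1$ edges) and, using non-bipartiteness, choose an additional edge $f$ such that $T \cup \{f\}$ contains an odd cycle. Set $J = T \cup \{f\}$, a set of $v = \rank(\im(\alpha^*))$ edges, and let $\Lambda$ denote the sublattice of $\Z^v = C_0$ generated by $\{\alpha^*(e) \mid e \in J\}$.

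The first step is to establish $\Q$-linear independence of $\{\alpha^*(e) \mid e \in J\}$. This is the classical fact that the unoriented incidence vectors of a connected non-bipartite graph span a space of dimension equal to the number of vertices; applied to the subgraph $T \cup \{f\}$ on all $v$ vertices, it shows $\alpha^*$ is injective on $\Span(J)$ and that $\Lambda$ has full rank $v$ inside the rank-$v$ lattice $\im(\alpha^*)$.

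The main step is to upgrade this to the integral equality $\Lambda = \im(\alpha^*)$, i.e.\ to show $\alpha^*(e') \in \Lambda$ for every remaining edge $e' = (i,j)$. The key device is telescoping along the unique $T$-path $i = w_0, w_1, \dots, w_k = j$: the alternating sum $\sum_{\ell=0}^{k-1} (-1)^{\ell}\alpha^*(w_\ell w_{\ell+1})$ lies in $\Lambda$ and equals $v_i + v_j = \alpha^*(e')$ when $k$ is odd and $v_i - v_j$ when $k$ is even. Edges with odd tree-distance are therefore handled immediately. For edges with even tree-distance one needs the factor of $2$: writing $f = (a,b)$, the endpoints $a, b$ necessarily have even $T$-distance (since $f$ closes an odd cycle), so telescoping gives $v_a - v_b \in \Lambda$; combining with $\alpha^*(f) = v_a + v_b \in \Lambda$ yields $2v_a, 2v_b \in \Lambda$. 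A further telescoping from $a$ to any vertex $j$ along $T$ yields $v_a \pm v_j \in \Lambda$, and combining with $2v_a$ gives both signs, hence $2v_j \in \Lambda$ for every $j$. The identity $v_i + v_j = (v_i - v_j) + 2v_j$ then completes the even case.

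The main obstacle lies in this last step: verifying that a single odd-cycle edge $f$ genuinely provides, over $\Z$ rather than merely $\Q$, the factor of $2$ needed to close the gap between the sublattice generated by a spanning tree alone and the full image $\im(\alpha^*)$ (which has index $2$ in $C_0$ by Neumann's cokernel computation). Once this parity bookkeeping is fixed, the conclusion that $\alpha^*$ is basic with basic generators $J$ is immediate from the definitions.
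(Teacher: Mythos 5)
Your proof is correct, and it takes a genuinely different route from the paper's. Both approaches ultimately select essentially the same set $J$: a spanning tree $T$ of the $1$-skeleton of $\widehat M$ plus one extra edge that closes an odd cycle (the paper's three cases are the degenerate single-cusp case, the tree-plus-self-loop case, and the tree-plus-$3$-cycle case, which are just the two flavors of ``tree plus odd cycle'' that a triangulation always supplies). The difference is the verification mechanism. The paper observes that $\Imag(\alpha^*)$ has index $2$ in $C_0$ (Neumann's cokernel computation) and then computes that the $v\times v$ submatrix of $\alpha^*$ indexed by $J$ has determinant $\pm 2$, whence the sublattice it generates coincides with $\Imag(\alpha^*)$. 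You instead prove the lattice equality directly: $\Q$-linear independence follows from the classical fact that the unoriented incidence matrix of a connected non-bipartite graph has full rank, and $\Z$-surjectivity onto $\Imag(\alpha^*)$ is verified by telescoping $\alpha^*$ along tree paths and carefully tracking parities, using the odd-cycle edge to manufacture $2v_j$ for every vertex $j$. What your approach buys: the argument is unified across the paper's three cases (including the self-loop case, which you handle correctly since a loop at $a$ already gives $2v_a$ directly), and it is self-contained in that Neumann's order-$2$ cokernel is never actually used (you mention it parenthetically, but the telescoping already yields $\alpha^*(e')\in\Lambda$ for every edge $e'$, hence $\Lambda=\Imag(\alpha^*)$ with no external input). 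What the paper's approach buys: the explicit determinant computation is shorter, produces the concrete $\pm 2$, and mirrors the higher-rank $\alpha^*$ proof in Section~\ref{sec:AlphaStarBasicGeneral}, where the same block-matrix determinant identities are reused for $n>2$.

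One small point of presentation: you flag the parity bookkeeping as the ``main obstacle'' after having already carried it out in the preceding paragraph, which reads as though there were a remaining gap. There is none --- the three observations ($v_a-v_b\in\Lambda$ from even $T$-distance, $v_a+v_b=\alpha^*(f)\in\Lambda$, hence $2v_a\in\Lambda$ and by telescoping $2v_j\in\Lambda$ for all $j$) do close the argument, including when $f$ is a self-loop (then $a=b$, the $T$-distance is $0$, $v_a-v_b=0$, and $\alpha^*(f)=2v_a$ gives the needed element directly).
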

The proof will be relegated to Section~\ref{sec:AlphaStarBasic}, where we shall also give explicit basic generators.
\begin{proposition}\label{prop:ReducedPtolemyVariety}
Let $\varepsilon_{i_1},\dots,\varepsilon_{i_v}$ be basic generators for $\alpha^*$. 
The ring of invariants $\mathcal O^T$ is isomorphic to $\C[c_1,\dots,c_e]$ modulo the Ptolemy relations and the relations $c_{i_1}=\cdots=c_{i_v}=1$, i.e.~the reduced Ptolemy variety is isomorphic to the subset of $P_2^\sigma(\T)$ where the Ptolemy coordinates of the basic generators are $1$.
\end{proposition}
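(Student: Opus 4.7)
The plan is to prove that the natural restriction homomorphism
\[
r\colon\mathcal O^T\longrightarrow\mathcal O/(c_{i_1}-1,\dots,c_{i_v}-1)
\]
is an isomorphism of $\C$-algebras. First, I would check that $r$ is well-defined by noting that each Ptolemy relation~\eqref{eq:PtolemyRelationObstruction} is $T$-semi-invariant: within a single relation, all four monomial terms carry the same $T$-weight under $\alpha^*$, since each is a product of the Ptolemy coordinates of a perfect matching on the four vertices of $\Delta_k$. Hence the slice $S:=P_2^\sigma(\T)\cap\{c_{i_1}=\dots=c_{i_v}=1\}$ is a closed subvariety and $r$ descends to a ring map.

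For surjectivity, by Corollary~\ref{cor:RingOfInvs} it suffices to realize every monomial $c^a\in\mathcal O$ as the restriction of some invariant. One chooses $b\in\Span(e_{i_1},\dots,e_{i_v})\subset\Z^e$ with $\alpha^*(a+b)=0$; such a $b$ exists uniquely because $\alpha^*(a)\in\im(\alpha^*)$ and Proposition~\ref{prop:Basic} asserts that $\alpha^*$ restricted to $\Span(e_{i_1},\dots,e_{i_v})$ is a $\Z$-linear isomorphism onto $\im(\alpha^*)$. The monomial $c^{a+b}$ then lies in $\mathcal O^T$ and restricts on $S$ to $c^a$.

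For injectivity, I would show $T\cdot S=P_2^\sigma(\T)$. Given $c\in P_2^\sigma(\T)$, finding $x\in T$ with $xc\in S$ means solving $\prod_{k=1}^v x_k^{\alpha^*_{k,i_l}}=c_{i_l}^{-1}$ for $l=1,\dots,v$. The coefficient matrix $A=[\alpha^*_{k,i_l}]$ is the matrix of $\alpha^*|_{\Span(e_{i_l})}$, which is an isomorphism onto the finite-index subgroup $\im(\alpha^*)\subset\Z^v$; in particular $\det A\ne 0$. Since $\C^*$ is divisible, the induced homomorphism $(\C^*)^v\to(\C^*)^v$ is surjective, so the required $x$ exists. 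Any invariant $f\in\Ker(r)$ then vanishes on $S$, hence on $T\cdot S=P_2^\sigma(\T)$, so $f=0$ in $\mathcal O$.

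The hard part will be turning the monomial-by-monomial surjectivity construction into a genuine ring-theoretic statement compatible with the Ptolemy relations. The semi-invariance observation from the first paragraph is what saves us: since all monomials in a given Ptolemy relation share a common $T$-weight, the correcting exponent vector $b$ is uniform across the relation, so the lift $c^a\mapsto c^{a+b}$ respects the defining ideal of $P_2^\sigma(\T)$. With this compatibility in place, together with Proposition~\ref{prop:Basic}, the isomorphism between $\mathcal O^T$ and the coordinate ring of the slice follows.
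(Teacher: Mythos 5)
Your proof is correct, and it takes a genuinely different route from the paper. The paper's argument is a change-of-basis computation: since $w_1,\dots,w_{e-v}$ (a basis for $\Ker\alpha^*$) together with $e_{i_1},\dots,e_{i_v}$ form a $\Z$-basis of $C_1\cong\Z^e$, the ambient Laurent ring $\C[c_1^{\pm},\dots,c_e^{\pm}]$ is simultaneously the Laurent ring in $c^{w_1},\dots,c^{w_{e-v}},c_{i_1},\dots,c_{i_v}$, after which the invariant subring and the evaluation at $c_{i_l}=1$ are read off directly. Your argument instead verifies explicitly that the restriction map $r\colon\mathcal O^T\to\mathcal O/(c_{i_1}-1,\dots,c_{i_v}-1)$ is surjective (by correcting an arbitrary monomial $c^a$ to an invariant monomial $c^{a+b}$ with $b$ supported on the basic generators) and injective (by showing the slice $S$ meets every $T$-orbit, using $\det A=\pm 2\ne 0$ and divisibility of $\C^*$). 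This is a valid and somewhat more geometric way to see the same thing; what it buys is an explicit saturation statement $T\cdot S=P_2^\sigma(\T)$, and what the paper's version buys is that it works at the level of ideals without invoking reducedness of $\mathcal O$ (your injectivity step tacitly uses that a regular function vanishing on all points of $P_2^\sigma(\T)$ is zero, which holds here since $\mathcal O$ is the classical reduced coordinate ring).

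Two small remarks. First, your opening paragraph is unnecessary: $r$ is the composite of the inclusion $\mathcal O^T\hookrightarrow\mathcal O$ with the quotient map, hence automatically a well-defined ring homomorphism; semi-invariance of the Ptolemy relation (which, incidentally, has three terms, not four) is not needed for this. Second, the concern raised in your final paragraph is a non-issue: once surjectivity and injectivity of the ring map $r$ are established, $r$ is an isomorphism, and the monomial-by-monomial lift does not need to be a ring homomorphism or to be ``compatible with the relations'' in any further sense. The semi-invariance observation is conceptually illuminating (it explains why setting $c_{i_l}=1$ preserves the Ptolemy relations as relations among invariants), but it is not an extra step you need to carry out.
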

\begin{proof}
Let $w_1,\dots,w_{e-v}$ be a basis for $\Ker(\alpha^*)$. Hence, $w_1,\dots,w_{e-v}$ and $\varepsilon_{i_1},\dots,\varepsilon_{i_v}$ generate $C_1$. We can thus uniquely express each $c_i$ as a monomial in the $w_j$'s and the $c_{i_j}$'s. The result now follows from Corollary~\ref{cor:RingOfInvs}.
\end{proof}

\begin{remark}
This is how the Ptolemy varieties are computed in SnapPy.
\end{remark}
\subsection{Shapes and gluing equations}
One can assign to each simplex a \emph{shape}
\begin{equation}
z=\sigma_3\sigma_2\frac{c_{03}c_{12}}{c_{02}c_{13}}\in\C\setminus\{0,1\},
\end{equation}
and one can show (see \cite{GaroufalidisThurstonZickert,GaroufalidisGoernerZickert}) that these satisfy Thurston's gluing equations.
For the geometric representation of a cusped hyperbolic manifold, the shape field (field generated by the shapes) is equal to the invariant trace field, which is in general smaller than the trace field, see Maclachlan--Reid~\cite{MaclachlanReid}.
\begin{remark}
Note that the shapes are elements in $\mathcal O^T$.
\end{remark}

\subsection{Proof that $\alpha^*$ is basic}\label{sec:AlphaStarBasic}
Since $\alpha^*$ has cokernel of order $2$, it is enough to prove that there is a set of columns of $\alpha^*$ forming a matrix with determinant $\pm 2$.
Recall that the columns of $\alpha^*$ corresponds to $1$-cells of $\T$. We shall thus consider graphs in the $1$-skeleton of $\widehat M$. We recall some basic results from graph theory. All graphs are assumed to be connected.
\begin{definition}
The \emph{incidence matrix} of a graph $G$ with vertices $v_1,\dots,v_k$ and edges $\varepsilon_1,\dots,\varepsilon_l$ is the $k\times l$ matrix $I_G$
whose $ij$th entry is $1$ if $v_i$ is incident to $\varepsilon_j$, and $0$ otherwise.
\end{definition}

\begin{lemma}\label{lemma:IncidenceMatrix}
The rank of $I_G$ is $k-1$. If $G$ is a tree, $I_G$ is a $k\times (k-1)$ matrix, and removing any row gives a matrix with determinant $\pm 1$.\qed
\end{lemma}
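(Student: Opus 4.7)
My plan is to handle the determinant claim, which is the substantive part, by induction on the number $k$ of vertices, and then derive the rank claim as a corollary. The key combinatorial input is that every finite tree with at least two vertices has at least two leaves.

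For the tree case, I proceed by induction on $k$. The base case $k = 2$ is the single-edge graph, for which $I_G$ is the column vector $(1,1)^T$; deleting either row yields the $1 \times 1$ matrix $(1)$ of determinant $1$. For the inductive step, let $G$ be a tree with $k \geq 3$ vertices and let $r$ be the index of the row to be removed. Since $G$ has at least two leaves, we may choose a leaf $v$ with $v \neq v_r$. The unique edge $e$ incident to $v$ contributes a column of $I_G$ containing exactly two $1$'s, one in row $v$ and one in the row of $v$'s neighbor. After deletion of row $r$, the row indexed by $v$ has its single nonzero entry in column $e$, so cofactor expansion along that row yields $\pm 1$ times the determinant of the incidence matrix of the smaller tree $G \setminus \{v, e\}$ with the row for $v_r$ removed. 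By the inductive hypothesis this latter determinant is $\pm 1$, completing the step.

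For the rank statement, the lower bound $\rank(I_G) \geq k - 1$ comes from the tree case applied to any spanning tree $T \subseteq G$: the submatrix $I_T$ of $I_G$ has rank $k - 1$ by the determinant argument above. The upper bound $\rank(I_G) \leq k - 1$ over $\mathbb F_2$ follows since every column of $I_G$ contains exactly two $1$'s, so the sum of all rows vanishes modulo $2$; consequently $I_G$ has $\mathbb F_2$-rank exactly $k - 1$. The only conceptual hurdle in the argument is ensuring that a leaf distinct from $v_r$ exists at each step of the induction, which is immediate from the two-leaf property of finite trees.
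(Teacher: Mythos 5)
The paper gives no proof of this lemma --- the \verb|\qed| marks it as a cited standard fact --- so there is no argument of the authors' to compare against; the question is simply whether your argument is sound.

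Your proof of the tree-determinant claim is correct and is the standard one: pick a leaf $v$ distinct from the removed vertex $v_r$ (possible because a finite tree on $\geq 2$ vertices has at least two leaves), cofactor-expand along the leaf's row, and recurse on $G\setminus\{v,e\}$. The base case and the bookkeeping (the remaining matrix is exactly the incidence matrix of the smaller tree with the $v_r$-row removed) are handled correctly. This is the only part of the lemma the paper actually invokes, in the determinant evaluations of Cases 2 and 3 of Section~\ref{sec:AlphaStarBasic}, so the substantive content is in good shape.

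The rank claim is more delicate than your argument acknowledges, and as written your proof does not establish what the lemma literally says. Your lower bound $\rank(I_G)\geq k-1$ is a rational (indeed integral) statement, but your upper bound lives entirely over $\mathbb{F}_2$: the row sum of $I_G$ is $(2,\dots,2)$, which vanishes mod $2$ but not over $\mathbb{Q}$, so no rational dependence among the rows is produced. Combining a $\mathbb{Q}$-lower-bound with an $\mathbb{F}_2$-upper-bound yields only ``the $\mathbb{F}_2$-rank is exactly $k-1$,'' which is indeed what you conclude, but this is weaker than an unqualified rank assertion. The distinction matters: the unsigned incidence matrix of a connected \emph{non-bipartite} graph (e.g.\ a triangle) has full row rank $k$ over $\mathbb{Q}$, so the lemma's first sentence, read over $\mathbb{Q}$, is simply false in general; it is correct over $\mathbb{F}_2$, or over $\mathbb{Q}$ for bipartite $G$. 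There is also a small clash with the paper's conventions if $G$ contains a self-loop (as the graph $G$ of Case 2 does): under the paper's definition the self-loop column has a single $1$, so the ``every column has two $1$'s'' premise of your row-sum argument fails for such $G$. None of this propagates into the paper, which only uses the tree-determinant statement and applies it to the loop-free tree $I_T$, but as a stand-alone proof of the lemma you should either state that the rank is meant over $\mathbb{F}_2$ or flag that the $\mathbb{Q}$-rank claim requires bipartiteness.
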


\subsubsection{Case $1$: A single cusp}
In this case the result is trivial. The matrix representation for $\alpha^*$ is $\begin{pmatrix}2&\cdots&2\end{pmatrix}$.

\subsubsection{Case $2$: Multiple cusps, self edges}
Suppose $\widehat M$ has a self edge $\varepsilon_1$ (an edge joining a cusp to itself), and consider the graph $G$ consisting of the union of $\varepsilon_1$ with a maximal tree $T$ (see Figure~\ref{fig:TreeWithCycle}). The columns of $\alpha^*$ corresponding to the edges of $G$ then form the matrix 
\begin{equation}
B=\left(\begin{array}{c|c}
2& \\\cline{1-1}
 0 &\raisebox{10pt}{{$I_T$}}
\end{array}\right)
\end{equation}

which, by Lemma~\ref{lemma:IncidenceMatrix}, has determinant $\pm 2$.
\subsubsection{Case $3$: Multiple cusps, no self edges}
Pick a face with edges $\varepsilon_1,\varepsilon_2,\varepsilon_3$, and add edges to form a graph $G$ such that $G\setminus \varepsilon_1$ is a maximal tree (see Figure~\ref{fig:TreeWithThreeCycle}). The corresponding columns form the matrix

\begin{equation}
C=I_G=\left(\begin{array}{c|c}
1&\\ 
0&I_T\\
1&\\\cline{1-1} 
 0&
\end{array}\right)
\end{equation}
which, by Lemma~\ref{lemma:IncidenceMatrix}, has determinant $\pm 2$.
This concludes the proof that $\alpha^*$ is basic.

Note that 
\begin{equation}
\det(B)=\det\begin{pmatrix}2&1\\&1\end{pmatrix}=2,\qquad \det(C)=\det\begin{pmatrix}1&1&\\0&1&1\\1&&1\end{pmatrix}=2,
\end{equation}
i.e.~only the edges and vertices shown in Figures~\ref{fig:TreeWithCycle} and \ref{fig:TreeWithThreeCycle} contribute to the determinant.
\begin{figure}[htb]
\centering
\begin{minipage}[c]{0.45\textwidth}
\centering
\scalebox{1}{\input{figures_gen/TreeWithCycle.tex}}
\end{minipage}
\hfill
\begin{minipage}[c]{0.45\textwidth}
\centering
\input{figures_gen/TreeWithThreeCycle.tex}
\end{minipage}
\\
\begin{minipage}[t]{0.45\textwidth}
\caption{Tree $G$ with $1$-cycle; $G\setminus\varepsilon_1$ is a maximal tree.}\label{fig:TreeWithCycle}
\end{minipage}
\begin{minipage}[t]{0.45\textwidth}
\caption{Tree $G$ with $3$-cycle; $G\setminus\varepsilon_1$ is a maximal tree.}\label{fig:TreeWithThreeCycle}
\end{minipage}
\end{figure}

\begin{remark}
Trees with $1$- or $3$-cycles are also used in~\cite[Sec.4.6]{GHRS} to study index structures.
\end{remark}

\section{The Ptolemy field and the trace field}
\subsection{Explicit description of the Ptolemy field}
By Proposition~\ref{prop:ReducedPtolemyVariety} any $c\in P^\sigma_2(\T)$ is equivalent to a Ptolemy assignment $c^\prime$ whose coordinates for a set of basic generators $\varepsilon_{i_1},\dots,\varepsilon_{i_v}$ is $1$. In particular, it follows that the Ptolemy field (see Definition~\ref{def:PtolemyField}) of $c\in P^\sigma_2(\T)$ is given by
\begin{equation}\label{eq:kc}
k_c=k_{c^{\prime}}=\Q\big(\big\{c_{\varepsilon_1},\dots,c_{\varepsilon_e}\big\}\big).
\end{equation}

\begin{definition}
Let $\rho:\pi_1(M)\to\PSL(2,\C)$ be a representation. The \emph{trace field} of $\rho$ is the field generated by the traces of elements in the image. 
We denote it $k_\rho$.
\end{definition}
Our main result is the following. We defer the proof to Section~\ref{sec:ProofOfMainThm}.
\begin{theorem}\label{thm:PtolemyEqTrace} Let $c\in P^\sigma_2(\T)_\rd$. If the corresponding generic representation $\rho$ of $\pi_1(M)$ in $\PSL(2,\C)$ is boundary-non-trivial, the Ptolemy field of $c$ equals the trace field of $\rho$.\qed
\end{theorem}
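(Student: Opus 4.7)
I plan to establish the equality by showing the two inclusions $k_\rho\subseteq k_c$ and $k_c\subseteq k_\rho$. For $k_\rho\subseteq k_c$: by~\eqref{eq:summaryof1to1} the representation $\rho=\mathcal{R}(c)$ is presented via the natural cocycle, whose matrices---long edges $\bigl(\begin{smallmatrix}0 & -c_{ij}^{-1}\\ c_{ij} & 0\end{smallmatrix}\bigr)$ and short edges $\bigl(\begin{smallmatrix}1 & -c_{ki}/(c_{ij}c_{jk})\\ 0 & 1\end{smallmatrix}\bigr)$---have entries in $\Q(c_1,\ldots,c_e)$. Hence each $\tr\rho(\gamma)$ is a rational function of the $c_i$, and by Remark~\ref{rm:IntrinsicDiagonalAction} this function is $T$-invariant (the diagonal action only conjugates $\rho$). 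Evaluating at $c$, we get $\tr\rho(\gamma)\in k_c$.

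For $k_c\subseteq k_\rho$, I use Proposition~\ref{prop:ReducedPtolemyVariety} to replace $c$ by a convenient $T$-representative. I plan to produce a representative $c^{\mathrm{nice}}$ whose Ptolemy coordinates already lie in $k_\rho$, and then argue the normalized form $c^{\mathrm{new}}$ (with basic-generator coordinates $1$) also has coordinates in $k_\rho$. To produce $c^{\mathrm{nice}}$ I first conjugate $\rho$ into $\SL_2(k_\rho)$: genericity of the decoration forces $\rho$ to be irreducible (otherwise all peripheral subgroups share a common fixed line, placing every decoration coset in a single $B$-coset, contradicting genericity), and boundary-non-triviality supplies a nontrivial parabolic $\gamma$ so that $\rho(\gamma)\mp I$ is a nonzero nilpotent in the trace-field subalgebra $k_\rho[\rho(\pi_1 M)]\subseteq M_2(\C)$. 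For an irreducible $\SL_2$-representation this subalgebra is a quaternion algebra over $k_\rho$, and a nonzero nilpotent forces it to be split, hence $M_2(k_\rho)$; so $\rho$ is conjugate into $\SL_2(k_\rho)$. After conjugation, each peripheral fixed line lies in $\mathbb{P}^1(k_\rho)$, so I may pick decoration vectors $v_k\in k_\rho^2$ at every cusp $k$; by Figure~\ref{fig:DecToPtolemy}, the resulting Ptolemy assignment $c^{\mathrm{nice}}$ has all coordinates $\det(v_i,v_j)\in k_\rho$.

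Finally, to pass from $c^{\mathrm{nice}}$ to $c^{\mathrm{new}}$, one applies $y\in T$ with $y^{\alpha^*(\varepsilon_{i_\ell})}=1/c_{\varepsilon_{i_\ell}}^{\mathrm{nice}}$ at each basic generator. The individual $y_k$ may not lie in $k_\rho$, but for any edge $e$ Proposition~\ref{prop:Basic} provides integers $a_\ell$ with $\alpha^*(\varepsilon_e)=\sum_\ell a_\ell\,\alpha^*(\varepsilon_{i_\ell})$, so $y^{\alpha^*(\varepsilon_e)}=\prod_\ell(c_{\varepsilon_{i_\ell}}^{\mathrm{nice}})^{-a_\ell}\in k_\rho$ and hence $c_e^{\mathrm{new}}=c_e^{\mathrm{nice}}\cdot y^{\alpha^*(\varepsilon_e)}\in k_\rho$. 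The $\PSL(2,\C)$ case follows by working with a $\sigma$-compatible lifted natural cocycle as in~\eqref{eq:1to1Obstruction}; the sign ambiguity parametrized by $H^1(\widehat M;\Z/2\Z)$ does not alter the field generated. The main obstacle I anticipate is the quaternion-algebra step that pins $\rho$ inside $\SL_2(k_\rho)$---once this is in place, the two subsequent lattice-bookkeeping steps via Proposition~\ref{prop:Basic} are routine.
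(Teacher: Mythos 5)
Your proof is correct but takes a genuinely different route from the paper's. The easy inclusion $k_\rho\subseteq k_c$ is the same in both arguments. For $k_c\subseteq k_\rho$, the paper is entirely elementary and self-contained: after normalizing the Ptolemy coordinates of a spanning tree-with-cycle to $1$, it computes traces $\Tr(\beta(x_i)\alpha(c_{i+1})\cdots\beta(x_j)\alpha(c_\varepsilon))$ of products of long- and short-edge matrices along explicit loops, inserts powers of the peripheral loops $M_i$, and applies a finite-difference trick (Lemmas~\ref{lemma:Deltas}, \ref{lemma:TraceFormula}) to isolate the product $m_i\cdots m_jc_{i+1}\cdots c_j c_\varepsilon$; a bootstrap via Lemmas~\ref{lemma:BootStrapEdge}--\ref{lemma:AllMiInKRho} shows each $m_i\in k_\rho$, and the tree normalization kills the $c_k$'s. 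Your argument instead invokes the Maclachlan--Reid quaternion-algebra machinery: irreducibility of $\rho$ (which you correctly derive from genericity plus boundary-non-triviality), the fact that $k_\rho[\rho(\pi_1M)]$ is a quaternion algebra over $k_\rho$, and splitting of that algebra by the nilpotent $\rho(\gamma)\mp I$, so $\rho$ conjugates into $\SL_2(k_\rho)$ and one can choose fixed-line vectors in $k_\rho^2$; the lattice bookkeeping via Proposition~\ref{prop:Basic} then transports the conclusion to the normalized representative. Your approach is conceptually crisper and explains \emph{why} the result should be true, while the paper's is fully self-contained, needs no external algebra theory, and --- since it works with the lifted natural cocycle directly --- handles the $\PSL(2,\C)$ case without any extra setup. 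The one point you should flesh out is precisely that last item: for a genuinely projective $\rho$ (nontrivial $\sigma$) the quaternion algebra must be built from the preimage group $\widehat\Gamma\subset\SL_2(\C)$ (or equivalently from the lifted cocycle, whose products close up only up to $\pm1\in k_\rho$), and one must verify that this algebra is still $4$-dimensional over $k_\rho$ and that the Burnside/trace-identity argument goes through; your sentence ``the $\PSL(2,\C)$ case follows by working with a $\sigma$-compatible lifted natural cocycle'' gestures at this but does not carry it out, and the claim that the sign ambiguity only affects the preimage (not the field) is a separate, easier point. Finally, a small streamlining is available: once $c^{\mathrm{nice}}$ has all coordinates in $k_\rho$, Corollary~\ref{cor:RingOfInvs} already gives $k_c=k_{c^{\mathrm{nice}}}\subseteq k_\rho$ (the generators of $\mathcal{O}^T$ are Laurent monomials in the $c_i$), so the passage to $c^{\mathrm{new}}$ is optional.
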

\begin{remark}
Note that if $c\in P^\sigma_2(\T)_\rd$ is in a degree $0$ component, the Ptolemy field is a number field. 
\end{remark}
\subsection{The setup of the proof}
Since the natural cocycle is given in terms of the Ptolemy coordinates, it follows that $\rho$ is defined over the Ptolemy field. Hence, the trace field is a subfield of the Ptolemy field.

Fix a maximal tree $G$ with $1$ or $3$-cycle as in Figures~\ref{fig:TreeWithCycle} or \ref{fig:TreeWithThreeCycle}. As explained in Section~\ref{sec:AlphaStarBasic} the edges of $G$ are basic generators of $\alpha^*$. We may thus assume without loss of generality that the Ptolemy coordinates $c_i$ of the edges $\varepsilon_i$ of $G$ are $1$. By~\eqref{eq:kc}, it is thus enough to show that the Ptolemy coordinates of the remaining $1$-cells are in the trace field.

Let $\gamma$ denote the (lifted) natural cocycle of $c$. Then $\gamma$ assigns to each edge path $p$ in $\widehat M$ a matrix $\gamma(p)\in\SL(2,\C)$. Let
\begin{equation}\label{eq:AlphaBeta}
\alpha(a)=\begin{pmatrix}0&-a^{-1}\\a&0\end{pmatrix},\qquad \beta(b)=\begin{pmatrix}1&b\\0&1\end{pmatrix}.
\end{equation}
As shown in Figure~\ref{fig:PtolemyToCocycle}, $\gamma$ takes long and short edges to elements of the form $\alpha(a)$ and $\beta(b)$, respectively, where $a$ and $b$ are given in terms of the Ptolemy coordinates.

Since $\rho$ is boundary-non trivial, there exists for each cusp $i$ of $M$ a peripheral loop $M_i$ with $\gamma(M_i)\in P$ non-trivial. We shall here refer to such loops as \emph{non-trivial}. Fix such non-trivial loops $M_i$, once and for all, and let $m_i\neq 0$ be such that $\gamma(M_i)=\beta(m_i)$.
For any edge path $p$ with end point on a cusp $i$ we can alter $M_i$ by a conjugation if necessary (this does not change $m_i$) so that $p$ is composable with $M_i$.

\subsection{Proof for one cusp}
We first prove Theorem~\ref{thm:PtolemyEqTrace} in the case where there is only one cusp. In this case, all edges are self edges, and $T$ consists of a single edge $\varepsilon_1$.
\begin{lemma}\label{lemma:BootStrapEdgeOneCusp} For any self edge $\varepsilon$, we have $m_1c_\varepsilon\in k_\rho$. 
\end{lemma}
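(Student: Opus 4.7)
My plan is to exploit a simple trace identity to deduce $m_1 c_\varepsilon \in k_\rho$ from the traces of two nearby based loops. The identity, directly verifiable from~\eqref{eq:AlphaBeta}, is
\[
\Tr\!\bigl(\beta(x)\,\alpha(c)\,\beta(y)\bigr) \;=\; (x+y)\,c,
\]
so that left-multiplying a product of this shape by $\beta(m_1)$ shifts its trace by exactly $m_1 c$. Since $\Tr(\gamma(\ell))$ lies in $k_\rho$ for any based loop $\ell$, producing a single loop $\ell$ through the long edge of $\varepsilon$ with $\gamma$-value of the above shape will suffice.

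To build such a loop I would fix a basepoint $p_0$ at cusp $1$, lying in some truncated-vertex triangle. Because $\varepsilon$ is a self edge, both ends of its long edge land in truncated-vertex triangles on the single (connected) cusp cross-section, so a walk of short edges connects $p_0$ to either end. The loop $\ell$ will walk along short edges from $p_0$ to the initial endpoint of the long edge, traverse that long edge once (contributing the factor $\alpha(c_\varepsilon)$ by Figure~\ref{fig:PtolemyToCocycle}), and then walk along short edges back to $p_0$. The two short-edge portions contribute elements of $P$ on the left and right, so $\gamma(\ell) = \beta(a)\,\alpha(c_\varepsilon)\,\beta(b)$ for some $a, b \in \C$; their explicit expressions in the Ptolemy coordinates will play no role.

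After conjugating $M_1$, which (as noted in the setup) preserves $m_1$, so that it is composable with $\ell$ at $p_0$, the concatenation $M_1 \cdot \ell$ is another based loop with $\gamma(M_1 \cdot \ell) = \beta(m_1)\,\gamma(\ell) = \beta(m_1+a)\,\alpha(c_\varepsilon)\,\beta(b)$. Applying the trace identity twice gives
\[
\Tr(\gamma(M_1 \cdot \ell)) - \Tr(\gamma(\ell)) \;=\; (m_1+a+b)\,c_\varepsilon - (a+b)\,c_\varepsilon \;=\; m_1 c_\varepsilon,
\]
and both terms on the left belong to $k_\rho$, finishing the lemma. The only point requiring any thought is the existence of the closing short-edge paths, but this is automatic from the connectedness of the cusp cross-section; thus I expect no real obstacle in this one-cusp case, whereas the multi-cusp version later in the section will presumably need to juggle peripheral loops at several cusps and keep track of which cusp hosts the closing peripheral.
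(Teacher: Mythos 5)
Your argument is correct and is essentially the paper's proof: the paper takes the basepoint to be the terminal endpoint of $\varepsilon$ (so your $\beta(b)$ is absent), computes $\Tr(\beta(x_1)\alpha(c_\varepsilon))=x_1c_\varepsilon$ and $\Tr(\beta(x_1)\beta(m_1)\alpha(c_\varepsilon))=(x_1+m_1)c_\varepsilon$, and subtracts. Your version with a general basepoint and the identity $\Tr(\beta(x)\alpha(c)\beta(y))=(x+y)c$ is the same computation, and since $P$ is abelian the placement of $\beta(m_1)$ is immaterial.
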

\begin{proof}
Let $X_1$ be a peripheral path such that $X_1\varepsilon$ is a loop (see Figure~\ref{fig:SelfEdgeCase}), and let $x_1$ be such that $\gamma(X_1)=\beta(x_1)$. We have
\begin{equation}\label{eq:OneCuspTraceRel}
\Tr(\gamma(X_1\varepsilon))=\Tr(\beta(x_1)\alpha(c_\varepsilon)) =\Tr\left(\begin{pmatrix}1&x_1\\0&1\end{pmatrix}\begin{pmatrix}0&-c_\varepsilon^{-1}\\c_\varepsilon&0\end{pmatrix}\right)=x_1c_\varepsilon\in k_\rho.
\end{equation}
Applying the same computation to the loop $X_1 M_1 \varepsilon$ yields
\begin{equation}
\Tr(\beta(x_1)\beta(m_1)\alpha(c_\varepsilon))=(x_1+m_1)c_\varepsilon\in k_\rho,
\end{equation}
and the result follows.
\end{proof}
Since the Ptolemy coordinate of $\varepsilon_1$ is $1$, it follows that $m_1\in k_\rho$. Since all edges are self edges, we have $c_\varepsilon\in k_\rho$ for all $1$-cells $\varepsilon$. This concludes the proof in the one cusped case.

\begin{figure}[htb]
\begin{center}
\hspace{-10mm}
\begin{minipage}[c]{0.3\textwidth}
\scalebox{0.8}{\input{figures_gen/SelfEdgeBootStrap.tex}}
\end{minipage}
\hspace{2mm}
\begin{minipage}[c]{0.6\textwidth}
\scalebox{0.8}{\input{figures_gen/TwoCuspEdge.tex}}
\end{minipage}
\hfill
\\
\begin{minipage}[t]{0.32\textwidth}
\captionsetup{width=1\textwidth}
\caption{Self edge.}\label{fig:SelfEdgeCase}
\end{minipage}
\hspace{2mm}
\begin{minipage}[t]{0.64\textwidth}
\captionsetup{width=\textwidth}
\caption[Bla]{Edge between cusps.}\label{fig:TwoCuspCase}
\end{minipage}
\end{center}
\end{figure}

\subsection{The general case} \label{sec:ProofOfMainThm}The general case follows the same strategy, but is more complicated since it involves edge paths between multiple cusps. 

\begin{lemma}\label{lemma:BootStrapEdge}
If $\varepsilon$ is a self edge from cusp $i$ to itself, $m_ic_\varepsilon\in k_\rho$
\end{lemma}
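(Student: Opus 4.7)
The plan is to mimic the one-cusp argument almost verbatim, working entirely at cusp $i$ since the edge $\varepsilon$ is a self-edge there. Because $\varepsilon$ has both endpoints on cusp $i$, we can choose a peripheral path $X_i$ running along cusp $i$ from the terminal point of $\varepsilon$ back to its initial point, so that the concatenation $X_i\varepsilon$ is a based loop representing an element of $\pi_1(M)$.

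Since $X_i$ lies on cusp $i$, the natural cocycle evaluates on it as a short-edge matrix, i.e.\ $\gamma(X_i)=\beta(x_i)$ for some $x_i\in\C$, and on $\varepsilon$ as a long-edge matrix $\gamma(\varepsilon)=\alpha(c_\varepsilon)$ (possibly up to the obstruction sign, which does not affect traces of loops). Exactly as in the one-cusp computation in~\eqref{eq:OneCuspTraceRel},
\begin{equation}
\Tr\bigl(\gamma(X_i\varepsilon)\bigr)=\Tr\bigl(\beta(x_i)\alpha(c_\varepsilon)\bigr)=x_i c_\varepsilon\in k_\rho.
\end{equation}

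Next, replace $X_i$ by $X_i M_i$, which again lies on cusp $i$ so that $X_i M_i \varepsilon$ is also a loop. Using $\gamma(M_i)=\beta(m_i)$ and multiplicativity $\beta(x_i)\beta(m_i)=\beta(x_i+m_i)$, the same trace computation yields
\begin{equation}
\Tr\bigl(\gamma(X_i M_i\varepsilon)\bigr)=(x_i+m_i)c_\varepsilon\in k_\rho.
\end{equation}
Subtracting the two identities gives $m_i c_\varepsilon\in k_\rho$, as required.

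The only mild subtlety, and the one place to be careful, is that for a manifold with several cusps we need to make sure the peripheral loop $M_i$ is actually composable with the path $X_i\varepsilon$; this is the standing convention set up just before the one-cusp proof, saying that by conjugating $M_i$ if necessary (which does not affect $m_i$) we may assume composability. Once that is checked, the proof is essentially the one-cusp argument transplanted to cusp $i$, with no interaction between distinct cusps required; the real work of relating Ptolemy coordinates of edges \emph{between different} cusps to the trace field will be done in the next lemma, using Figure~\ref{fig:TwoCuspCase}.
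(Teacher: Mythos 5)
Your proof is correct and is exactly the paper's approach: the paper literally says ``The proof is identical to that of Lemma~\ref{lemma:BootStrapEdgeOneCusp},'' and you reproduce that argument at cusp $i$, including the standing composability convention for $M_i$. Nothing to add.
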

\begin{proof}
The proof is identical to that of Lemma~\ref{lemma:BootStrapEdgeOneCusp}.
\end{proof}

\begin{lemma}\label{lemma:connectTwoCusp}
If two (distinct) cusps $i$ and $j$ are joined by an edge $\varepsilon$ in $G$, we have
\begin{equation}m_i m_j\in k_\rho.
\end{equation} 
\end{lemma}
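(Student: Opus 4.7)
The plan is to mimic the trace computation of Lemma~\ref{lemma:BootStrapEdgeOneCusp}, but now applied to a closed loop that visits both cusps and uses the edge $\varepsilon$ to pass between them. Since $\varepsilon$ lies in the maximal tree $G$, it is one of our basic generators, so by the normalization fixed at the start of Section~\ref{sec:ProofOfMainThm} we have $c_\varepsilon=1$. Hence $\gamma(\varepsilon)=\alpha(1)$ and $\gamma(\varepsilon^{-1})=\alpha(-1)$, both of which have entries in $\Q\subset k_\rho$.

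First I would form the based loop $L = M_i\cdot \varepsilon\cdot M_j^{-1}\cdot \varepsilon^{-1}$ at cusp $i$, where $M_j$ has been conjugated (if necessary) so that it is composable with the endpoints of $\varepsilon$; as remarked in the setup, such a conjugation does not change $m_j$. Because $L$ is a genuine closed edge-loop, $\gamma(L)\in\SL(2,\C)$ represents a well-defined conjugacy class in the image of $\rho$ (in the $\PSL$ case, well-defined up to a global sign determined by the obstruction cocycle), so $\operatorname{Tr}(\gamma(L))$, or at worst its negative, lies in the trace field $k_\rho$.

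The core of the argument is then the direct matrix computation using \eqref{eq:AlphaBeta}:
\begin{equation}
\gamma(L)=\beta(m_i)\,\alpha(1)\,\beta(-m_j)\,\alpha(-1)=\begin{pmatrix} 1+m_im_j & m_i\\ m_j & 1\end{pmatrix},
\end{equation}
whose trace is $2+m_im_j$. Since $2\in k_\rho$, this gives $m_im_j\in k_\rho$, as desired.

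The only real subtlety, and what I would identify as the "main obstacle," is the small bookkeeping around the $\SL$/$\PSL$ distinction: in the obstructed case one must check that the natural-cocycle lift of $\gamma(L)$ used in the computation is the same (up to $\pm I$) as the lift whose trace is being declared to lie in $k_\rho$. This is automatic once one observes that $L$ is a single loop (so the obstruction cocycle contributes a single overall sign) and that $k_\rho$ is a field, in particular closed under negation; no genuine further argument is needed.
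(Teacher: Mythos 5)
Your proof is correct and is essentially the paper's own argument: both use the loop (up to a cyclic permutation of the basepoint) $\varepsilon \cdot M_j^{-1}\cdot\varepsilon^{-1}\cdot M_i$, compute the trace via $\Tr\big(\alpha(c_\varepsilon)\beta(-m_j)\alpha(-c_\varepsilon)\beta(m_i)\big)=2+m_im_jc_\varepsilon^2$, and invoke $c_\varepsilon=1$. Your closing remark on the $\SL$/$\PSL$ sign ambiguity is sound but not strictly necessary, since the paper already works with a fixed lifted natural cocycle and $k_\rho$ is closed under negation.
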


\begin{proof}
Consider the loop $\varepsilon_j\bar{M}_j\bar{\varepsilon_j}M_i$ shown in Figure~\ref{fig:TwoCuspCase}. A simple computation shows that
\begin{equation}
\Tr(\alpha(c_\varepsilon)\beta(-m_{j})\alpha(-c_{\varepsilon})\beta(m_i))=2+m_i m_{j} c_{\varepsilon}^2.
\end{equation}
Since $\varepsilon\in T$, $c_\varepsilon=1$, and the result follows.
\end{proof}
More generally, the following holds.
\begin{lemma}\label{lemma:AllMiInKRho}
We have $m_i\in k_\rho$ for all cusps $i$.
\end{lemma}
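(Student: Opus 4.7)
The plan is to first produce one anchor value $m_{i_0}\in k_\rho$ and then propagate along $G$. Applying Lemma~\ref{lemma:connectTwoCusp} to every non-self edge of $G$ gives $m_im_j\in k_\rho$ whenever cusps $i$ and $j$ are joined by such an edge. Since $G$ contains a spanning tree, and boundary-non-triviality ensures $m_i\neq0$ for every cusp $i$, iterating $m_j=(m_im_j)/m_i$ along a path in $G$ shows that any single $m_{i_0}\in k_\rho$ forces $m_j\in k_\rho$ for all $j$. Everything thus reduces to finding one anchor.

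\emph{Anchor when $G$ has a self-edge (Case~$2$ of \S\ref{sec:AlphaStarBasic}).} If $G$ contains a self-edge $\varepsilon_0$ at a cusp $i_0$, its Ptolemy coordinate is $1$, so Lemma~\ref{lemma:BootStrapEdge} immediately yields $m_{i_0}=m_{i_0}\cdot c_{\varepsilon_0}\in k_\rho$.

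\emph{Anchor when $G$ has a $3$-cycle (Case~$3$).} If instead $G$ contains a $3$-cycle, it lies along the boundary of a $2$-face whose three vertices are three distinct cusps $i_1,i_2,i_3$, joined pairwise by edges $\varepsilon_{12},\varepsilon_{23},\varepsilon_{31}$ each of Ptolemy coordinate $1$. By the explicit formula in Figure~\ref{fig:PtolemyToCocycle}, each of the three short edges on the hexagonal boundary of the corresponding truncated $2$-face then carries cocycle value $\beta(-1)$. Consider the loop
\[
L\;=\;\varepsilon_{12}\,M_{i_2}\,\varepsilon_{23}\,M_{i_3}\,\varepsilon_{31}\,M_{i_1}
\]
based at cusp $i_1$, realized in the truncated manifold by inserting the three short edges $\beta(-1)$ between consecutive long-edge/peripheral segments. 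Writing $\alpha=\alpha(1)$, the natural cocycle assigns
\[
\gamma(L)\;=\;\alpha\,\beta(m_{i_2}-1)\,\alpha\,\beta(m_{i_3}-1)\,\alpha\,\beta(m_{i_1}-1),
\]
and a direct matrix computation (via cyclic invariance and the identity $\Tr(\beta(p)\alpha\beta(p')\alpha\beta(p'')\alpha)=pp'p''-(p+p'+p'')$) gives
\[
\Tr\gamma(L)\;=\;m_{i_1}m_{i_2}m_{i_3}\;-\;(m_{i_1}m_{i_2}+m_{i_1}m_{i_3}+m_{i_2}m_{i_3})\;+\;2.
\]
Since this trace lies in $k_\rho$ and the three pairwise products are already in $k_\rho$ by the propagation step, we conclude $m_{i_1}m_{i_2}m_{i_3}\in k_\rho$; dividing by $m_{i_2}m_{i_3}\in k_\rho\setminus\{0\}$ gives $m_{i_1}\in k_\rho$.

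\emph{Main obstacle.} The subtle point is correctly identifying the short-edge contributions of the truncated $2$-face: the formula in Figure~\ref{fig:PtolemyToCocycle} forces them to equal $\beta(-1)$ as soon as the three long Ptolemy coordinates are $1$, and this is what makes $L$ a genuine element of $\pi_1(M)$ with the holonomy computed above. A non-trivial obstruction cocycle $\sigma$ may introduce overall $\pm1$ signs into the $\alpha$- and $\beta$-matrices, but these only change $\Tr\gamma(L)$ by a sign and leave the conclusion $m_{i_1}m_{i_2}m_{i_3}\in k_\rho$ (and hence $m_{i_1}\in k_\rho$) intact.
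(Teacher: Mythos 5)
Your proof is correct and follows essentially the same strategy as the paper: obtain one anchor $m_{i_0}\in k_\rho$ via the self-edge ($1$-cycle) or the trace of the $3$-cycle loop, then propagate along the spanning tree using Lemma~\ref{lemma:connectTwoCusp}. The only cosmetic differences are that you write the $3$-cycle holonomy in cyclically permuted form, spell out the trace identity the paper calls ``an elementary computation,'' and add a (correct, though unneeded in the paper's exposition) remark about sign ambiguity from the obstruction cocycle.
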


\begin{proof}
If $G$ is a tree with 1-cycle, then $c_1=1$, so Lemma~\ref{lemma:BootStrapEdge} implies that $m_1\in k_\rho$. Inductively applying Lemma~\ref{lemma:connectTwoCusp} for the edge $\varepsilon_j$ connecting cusp $i=j-1$ and $j$ implies the result. If $G$ is a tree with $3$-cycle, the Ptolemy coordinates $c_1,c_2$ and $c_3$ are $1$, so the edges of the face are labeled by $\alpha(1)$ and $\beta(-1)$ only (see Figure~\ref{fig:PtolemyToCocycle}). Inserting the peripheral loops $M_i$ as in Figure~\ref{fig:TreeWithThreeCycleCase}, we obtain
\begin{equation}
\Tr\big(\beta(-1)\beta(m_1)\alpha(1)\beta(-1)\beta(m_2)\alpha(1)\beta(-1)\beta(m_3)\alpha(1)\big)\in k_\rho
\end{equation}
By an elementary computation, the trace equals
\begin{equation}
m_1m_2m_3-m_1m_2-m_2m_3-m_3m_1+2\in k_\rho.
\end{equation}
By Lemma~\ref{lemma:connectTwoCusp}, $m_im_j\in k_\rho$, so $m_1\in k_\rho$. The result now follows as above by inductively applying Lemma~\ref{lemma:connectTwoCusp}.
\end{proof}

Let $\varepsilon$ be an arbitrary $1$-cell. If $\varepsilon$ is a self edge, Lemmas~\ref{lemma:BootStrapEdge} and \ref{lemma:AllMiInKRho} imply that $c_\varepsilon\in k_\rho$. Otherwise, there exists an edge path $p$ in the maximal tree $G\setminus\varepsilon_1$, such that $p*\varepsilon$ is a loop in $\widehat M$. By relabeling the cusps and edges if necessary, we may assume that $p=\varepsilon_{i+1}\!*\!\varepsilon_{i+2}\!*\cdots\!*\!\varepsilon_{j}$, where $\varepsilon_{k}$ goes from cusp $k-1$ to cusp $k$. Pick peripheral paths $X_{k}$ on cusp $k$ connecting the ends (in $M$, not $\widehat M$) of edges $\varepsilon_{k}$ and $\varepsilon_{k+1}$ (see Figure~\ref{fig:ArbitraryEdgeCase}). We obtain a loop that can be composed with arbitrary powers of the peripheral loops $M_{i}, \dots, M_{j}$. We thus obtain the following traces (where $b_k\in\Z$)
\begin{equation}
\Tr(\beta(x_{i}+b_{i}m_{i})\alpha(c_{i+1})\beta(x_{i+1}+b_{i+1}m_{i+1})\alpha(c_{i+2})\cdots \beta(x_{j}+b_jm_j)\alpha(c_\varepsilon))\in k_\rho.
\end{equation}

\begin{figure}[htb]
\begin{center}
\hspace{-10mm}
\begin{minipage}[c]{0.4\textwidth}
\scalebox{0.8}{\input{figures_gen/ThreeCuspCycle.tex}}
\end{minipage}
\begin{minipage}[c]{0.6\textwidth}
\scalebox{0.8}{\input{figures_gen/ManyCuspCase.tex}}
\end{minipage}
\hfill
\\
\begin{minipage}[t]{0.35\textwidth}
\captionsetup{width=1\textwidth}
\caption{3-cycle case.}\label{fig:TreeWithThreeCycleCase}
\end{minipage}
\hspace{2mm}
\begin{minipage}[t]{0.5\textwidth}
\captionsetup{width=\textwidth}
\caption[Bla]{Arbitrary edge $\varepsilon$.}\label{fig:ArbitraryEdgeCase}
\end{minipage}
\end{center}
\end{figure}

It will be convenient to regard $\Tr(\beta(x_i)\alpha(c_{i+1})\beta(x_{i+1})\alpha(c_{i+2})\cdots \beta(x_j)\alpha(c_\varepsilon))$ as a function of variables $x_i$ (disregarding that the $x_i$'s are fixed expressions of the Ptolemy coordinates).
\begin{definition}
Given a function $f(x_1,\dots,x_r)$, let $\Delta_{i}f$ be the function given by
\begin{equation}
\Delta_{i}f(h)=f(x_1,\dots,x_i+h,\dots,x_r)-f(x_1,\dots,x_i,\dots,x_r).
\end{equation}
\end{definition}
The following is elementary.
\begin{lemma}\label{lemma:Deltas} If $f(x_1,\dots,x_r)$ is a polynomial where the exponents of all variables $x_i$ are $0$ or $1$, and where the highest degree term is $ax_1x_2\cdots x_r$, we have
\begin{equation}\label{eq:Deltas}
\Delta_r\big(\cdots\Delta_2\big(\Delta_1f(h_1)\big)(h_2)\cdots\big)=ah_1h_2\dots h_r,
\end{equation}
and is thus independent of the $x_i$'s.\qed
\end{lemma}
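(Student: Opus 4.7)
The plan is to reduce to monomials and exploit linearity of the difference operators. Since each $\Delta_i(\cdot)(h_i)$ is a $\Q$-linear operator on polynomials, and since, crucially, $\Delta_i$ and $\Delta_j$ commute whenever $i \ne j$ (both being finite-difference operators in distinct variables), it suffices to check the claim on the monomial basis of multi-affine polynomials.

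First I would write $f$ uniquely as
\begin{equation}
f(x_1,\dots,x_r) = \sum_{S \subseteq \{1,\dots,r\}} a_S \prod_{j \in S} x_j,
\end{equation}
where the hypothesis on the top-degree term says $a_{\{1,\dots,r\}} = a$. Next I would compute $\Delta_i$ applied to a single monomial $m_S = \prod_{j \in S} x_j$. If $i \notin S$, then $m_S$ does not depend on $x_i$, so $\Delta_i m_S(h_i) = 0$. If $i \in S$, write $m_S = x_i \, m_{S \setminus \{i\}}$ and compute
\begin{equation}
\Delta_i m_S(h_i) = (x_i + h_i) m_{S \setminus \{i\}} - x_i m_{S \setminus \{i\}} = h_i \, m_{S \setminus \{i\}},
\end{equation}
which still has all its exponents in $\{0,1\}$ (now only involving the remaining variables).

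Applying $\Delta_1$ therefore annihilates every term with $1 \notin S$ and replaces each remaining $x_1$ by $h_1$. Iterating, after applying $\Delta_1, \Delta_2, \dots, \Delta_r$ in order, the only monomial that survives every step is the one indexed by $S = \{1,\dots,r\}$, since a term with $i \notin S$ is killed as soon as $\Delta_i$ is applied. The survivor contributes exactly $a \, h_1 h_2 \cdots h_r$, proving the formula and exhibiting the stated independence from the $x_i$'s.

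The argument is essentially routine; there is no real obstacle beyond organizing the bookkeeping on which monomials survive. One could equivalently proceed by induction on $r$, splitting $f = x_1 g(x_2,\dots,x_r) + k(x_2,\dots,x_r)$ and noting $\Delta_1 f(h_1) = h_1 g$, where $g$ is multi-affine in the remaining variables with top-degree coefficient $a$; the inductive hypothesis then immediately gives $a h_1 h_2 \cdots h_r$.
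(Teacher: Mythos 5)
Your proof is correct. The paper omits the argument (the lemma ends with \qed and is only illustrated by the example in~\eqref{eq:DeltaExample}), and what you supply is exactly the elementary argument being alluded to: decompose into the multi-affine monomial basis, observe that $\Delta_i$ kills monomials not containing $x_i$ and replaces $x_i$ by $h_i$ otherwise, and conclude by linearity that only the top term survives; your suggested induction on $r$ via $f = x_1 g + k$ is an equivalent way to package the same observation.
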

If, e.g.,~$f(x_1,x_2)=x_1x_2$, we have
\begin{equation}\label{eq:DeltaExample}
\Delta_1f(h_1)=(x_1+h_1)x_2-x_1x_2=h_1x_2,\quad \Delta_2\big(\Delta_1f(h_1)\big)(h_2)=h_1(x_2+h_2)-h_1x_2=h_1h_2.
\end{equation}

\begin{lemma}\label{lemma:TraceFormula} Let $x_1,\dots,x_r$ be variables and $y_1,\dots,y_r$ be constants.
The expression
\begin{equation}
\Tr\big(\beta(x_1)\alpha(y_1)\cdots\beta(x_r)\alpha(y_r)\big)
\end{equation}
is a polynomial in the $x_i$'s whose unique highest degree term is $\prod_{i=1}^ry_i\prod_{i=1}^rx_i$. Moreover, for each monomial term, the exponent of each variable is either $1$ or $0$.
\end{lemma}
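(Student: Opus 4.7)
The plan is to expand the product multiplicatively, treating each $\beta(x_i)$ as the sum of two parts so that the dependence on the variables $x_i$ becomes manifest. Write $\beta(x_i) = I + x_i E_{12}$, where $E_{12} = \left(\begin{smallmatrix}0&1\\0&0\end{smallmatrix}\right)$. Then each factor $\beta(x_i)\alpha(y_i)$ equals $\alpha(y_i) + x_i\, E_{12}\alpha(y_i)$, and a direct computation gives the key identity
\begin{equation}
E_{12}\alpha(y_i) = y_i\, E_{11},\qquad E_{11} = \begin{pmatrix}1 & 0\\ 0 & 0\end{pmatrix}.
\end{equation}

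Next, I would fully expand the product
\begin{equation}
\prod_{i=1}^{r}\bigl(\alpha(y_i) + x_i\, E_{12}\alpha(y_i)\bigr)
= \sum_{S\subseteq\{1,\dots,r\}} \Bigl(\prod_{i\in S} x_i\Bigr) M_S,
\end{equation}
where $M_S = C_1C_2\cdots C_r$ with $C_i = y_i E_{11}$ if $i\in S$ and $C_i = \alpha(y_i)$ otherwise. Since $x_i$ appears only in the $i$-th factor (and there only linearly), each monomial in the expansion has each $x_i$ of exponent $0$ or $1$, and the terms of top total degree in the $x_i$'s come solely from $S=\{1,\dots,r\}$.

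For that top term, I would compute $M_{\{1,\dots,r\}} = \prod_{i=1}^r (y_iE_{11}) = \bigl(\prod_i y_i\bigr)E_{11}^r = \bigl(\prod_i y_i\bigr)E_{11}$, using $E_{11}^2=E_{11}$. Taking traces gives the coefficient $\prod_i y_i$ in front of $\prod_i x_i$, so the claimed unique highest-degree term emerges after verifying that no other subset $S$ contributes a monomial of maximal degree $r$ in the $x_i$'s (immediate since $|S|$ equals the $x$-degree of that summand).

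The only step that takes any care is the identity $E_{12}\alpha(y)=yE_{11}$; everything else is bookkeeping. I do not expect any real obstacle here: the argument is essentially a linearity-in-each-$x_i$ expansion, and the telescoping $E_{11}\alpha(y)E_{11} = \dots = E_{11}$ times scalars (which is what is implicitly happening in the full-$S$ product) makes the top term collapse to a single rank-one matrix whose trace is precisely $\prod y_i\prod x_i$.
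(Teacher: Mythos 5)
Your argument is correct and complete. The decomposition $\beta(x_i) = I + x_i E_{12}$, the identity $E_{12}\alpha(y) = yE_{11}$, and the multilinear expansion over subsets $S\subseteq\{1,\dots,r\}$ together make the claim manifest: each $x_i$ appears only in the $i$th factor and only linearly, so exponents are $0$ or $1$; the unique term of degree $r$ corresponds to $S=\{1,\dots,r\}$; and $\Tr\bigl(\prod_i y_i E_{11}\bigr) = \prod_i y_i$ gives the leading coefficient. The paper's proof is a one-line ``by induction on $r$'' with the details omitted, so yours is a genuinely different --- and more explicit --- argument. The inductive route tracks how the top-left entry of the partial product $\beta(x_1)\alpha(y_1)\cdots\beta(x_k)\alpha(y_k)$ grows at each step, but it never exhibits the full monomial structure, whereas your expansion identifies every monomial at once via the subset bookkeeping. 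The only small thing worth making explicit in a final write-up is that $\prod_i y_i \neq 0$ (automatic since $\alpha(y_i)$ already requires $y_i$ invertible), which guarantees the degree is exactly $r$ and the leading term does not vanish.
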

\begin{proof}
This follows by induction on $r$.
\end{proof}

Applying Lemmas \ref{lemma:TraceFormula} and \ref{lemma:Deltas} to the function
\begin{equation}
f(x_i,\dots,x_j)=\Tr\big(\beta(x_i)\alpha(c_{i+1})\beta(x_{i+1})\alpha(c_{i+2})\cdots \beta(x_j)\alpha(c_\varepsilon)\big),
\end{equation} 
we obtain 
\begin{equation}
(m_im_{i+1}\cdots m_jc_ic_{i+1}\cdots c_j)c_\varepsilon\in k_\rho.
\end{equation}
Since all $m_i$'s are in $k_\rho$ by Lemma~\ref{lemma:AllMiInKRho}, and all $c_i$'s are $1$ (since $\varepsilon_i\in T$), it follows that $c_\varepsilon$ is in $k_\rho$. This concludes the proof.

\subsection{Proof of Proposition~\ref{prop:GenericBoundaryNonTrivial}}\label{sec:PtolemyToRepProof}
The fact that $\mathcal R$ factors, follows from the fact that the diagonal action only changes the decoration (by diagonal elements, c.f.~Remark~\ref{rm:IntrinsicDiagonalAction}), not the representation.
Since the preimage of the right map in~\eqref{eq:1to1Obstruction} is parametrized by choices of lifts, i.e.~elements in $Z^1(\widehat M;\Z/2\Z)$, all that remains is to show that the only freedom in the choice of decoration of a boundary-non-trivial representation is the diagonal action. 
This follows from results in ~\cite{GaroufalidisThurstonZickert}: A decoration is an equivariant map
\begin{equation}
D\colon \widehat{\widetilde{M}}^{(0)}\to \PSL(2,\C)\big/P,
\end{equation}
and is thus determined by its image of lifts $\widetilde e_1,\dots,\widetilde e_v$ of the cusps of $M$. The freedom in the choice of $D(\widetilde e_i)$ is the choice of a coset $gP$ satisfying that $g\rho(\Stab(\widetilde e_i))g^{-1}\subset P$, where $\Stab(\widetilde e_i)\subset\pi_1(M)$ is the stabilizer of $\widetilde e_i$, i.e.~a peripheral subgroup corresponding to cusp $i$. Hence, if $\rho(\Stab(\widetilde e_i))$ is non-trivial, the freedom is right multiplication by a diagonal matrix (if it is trivial, any coset works). Hence, if $\rho$ is boundary-non-trivial, the only freedom in choosing a decoration is the diagonal action. 


\section{Ptolemy varieties for $n>2$}\label{sec:HigherPtolemy}
Many of our results generalize in a straightforward way to the higher rank Ptolemy varieties $P_n(\T)$. We recall the definition of these below, and refer to~\cite{GaroufalidisThurstonZickert,GaroufalidisGoernerZickert} for details.

We identify all simplices of $\T$ with a standard simplex
\begin{equation}
\Delta^3_n = \left\{(x_0,x_1,x_2,x_3) \in \R^4\bigm\vert 0\leq x_i \leq n,\enspace x_0 + x_1 + x_2 + x_3 = n\right\}
\end{equation}
and regard $\widehat M$ as a quotient of a disjoint union $\coprod_{k=1}^s\Delta^3_{n,k}$, with a copy $\Delta^3_{n,k}$ of $\Delta^3_n$ for each simplex $k$ of $\T$. Define $\Delta^3_n(\Z)=\Delta^3_n\cap\Z^4$ and $\dot\Delta^3_n(\Z)$ to be $\Delta^3_n(\Z)$ with the four vertex points removed.
A point in $M$ in the image of $\coprod_{k=1}^s\dot\Delta^3_{n,k}(\Z)$ is called an \emph{integral point} of $M$.

\subsection{Definition of the Ptolemy variety}
Assign to each $(t,k)\in\Delta^3_{n,k}(\Z)$ a \emph{Ptolemy coordinate} $c_{t,k}$.
For each simplex $k$, we have $\vert\Delta_{n-2}(\Z)\vert=\binom{n+1}{3}$ \emph{Ptolemy relations}
\begin{equation}
c_{\alpha+1001,k}c_{\alpha+0110,k}+c_{\alpha+1100,k}c_{\alpha+0011,k}=c_{\alpha+1010,k}c_{\alpha+0101,k},
\qquad \alpha\in\Delta_{n-2}(\Z)
\end{equation}
as well as \emph{identification relations},
\begin{equation}\label{eq:IdentificationRelationsN}
c_{t,k}=\pm c_{t^{\prime},k^{\prime}},\qquad\text{when}\qquad (t,k)\sim (t^{\prime},k^{\prime}).
\end{equation}
\begin{remark}The signs in~\ref{eq:IdentificationRelationsN} depend in a non-trivial way on the face pairings (see~\cite{GaroufalidisGoernerZickert}). For ordered triangulations the signs are always positive. As in Remark~\ref{rm:PickRepresentative} we can eliminate the identification relations by selecting a representative of each integral point of $M$.
\end{remark}
\begin{definition} The \emph{Ptolemy variety} $P_n(\T)$ is the subset of the affine algebraic set defined by the Ptolemy- and identification relations, consisting of the points where all Ptolemy coordinates are non-zero.
\end{definition}
For general $n$, we denote the group of upper triangular matrices with $1$ on the diagonal by $N$ (instead of $P$). As in~\ref{eq:summaryof1to1} we have 
\begin{equation}\label{eq:summaryof1to1GeneralN}
\xymatrix{\left\{\txt{Points in\\$P_n(\T)$}\right\}\ar@{<->}[r]^-{1-1}&\left\{\txt{Natural $(\SL(n,\C),N)$-\\cocycles on $M$}\right\}\ar@{<->}[r]^-{1-1}&\left\{\txt{Generically decorated\\
$(\SL(n,\C),N)$-representations}\right\}.}
\end{equation}

\subsection{The diagonal action}
Let $D$ be the group of diagonal matrices in $\SL(n,\C)$. We identify $D$ with the torus $(\C^*)^{n-1}$ via the identification
\begin{equation}
(\C^*)^{n-1}\to D,\qquad (a_1,\dots,a_{n-1})\mapsto \diag(a_1,a_2/a_1,\dots,a_{n-1}/a_{n-2})
\end{equation}
As in Remark~\ref{rm:IntrinsicDiagonalAction}, we have a diagonal action of the torus $T=D^{v}$ on the set of decorated representations, where $(D_1,\dots,D_v)\in T$ acts by replacing the coset $gN$ assigned to a vertex $w$ by $gD_iN$, where $i$ is the cusp corresponding to $w$. The corresponding action on $P_n(\T)$ is described in Lemma~\ref{lemma:ActionOnPtolemy} below.

Let $C_1^n$ be the group generated by the integral points of $M$, and let $C^n_0=C_0\otimes \Z^{n-1}$. In Garoufalidis--Zickert~\cite{GaroufalidisZickert} we defined maps
\begin{equation}
\alpha\colon C^n_0\to C^n_1,\qquad \alpha^*\colon C^n_1\to C^n_0
\end{equation}
generalizing~\eqref{eq:AlphaAlphaStar}. The map $\alpha^*$ takes an integral point $(t,k)$ to $\sum x_i\otimes e_{t_i}$, where $x_i$ is the cusp determined by vertex $i$ of simplex $k$. We shall not need the definition of $\alpha$.
\begin{lemma}[Garoufalidis--Zickert~\cite{GaroufalidisZickert}]\label{lemma:CokernelN}
The map $\alpha^*$ is surjective with cokernel $\Z/n\Z$.
\end{lemma}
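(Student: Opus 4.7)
The plan is to construct an explicit surjection $\bar\phi\colon C_0^n\twoheadrightarrow \Z/n\Z$ which vanishes on $\mathrm{Im}(\alpha^*)$, and then to show that $\mathrm{Im}(\alpha^*)$ coincides with $\ker\bar\phi$. Using the convention $e_0=0$ in $\Z^{n-1}$, define $\sigma\colon\Z^{n-1}\to\Z/n\Z$ by $\sigma(e_j)\equiv j\pmod n$ and let $\pi\colon C_0\to\Z$ be the augmentation. For every integral point $(t,k)$ of $M$,
\[
(\sigma\circ(\pi\otimes\mathrm{id}))\big(\alpha^*(t,k)\big)=\sum_{i=0}^{3}t_i\equiv n\equiv 0\pmod n,
\]
so $\bar\phi:=\sigma\circ(\pi\otimes\mathrm{id})$ descends to a surjection $\mathrm{coker}(\alpha^*)\twoheadrightarrow\Z/n\Z$, which gives the lower bound $|\mathrm{coker}(\alpha^*)|\geq n$.

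For the matching upper bound, I first establish \emph{cusp transitivity}: for any cusps $a,b$ of $M$ and any $l\in\{1,\dots,n-1\}$, the element $(a-b)\otimes e_l$ lies in $\mathrm{Im}(\alpha^*)$. Within a single tetrahedron $k$ with vertex cusps $x_0,\dots,x_3$, the difference
\[
\alpha^*\big((n-l,l,0,0),k\big)-\alpha^*\big((n-l,0,l,0),k\big)=(x_1-x_2)\otimes e_l
\]
realizes this for any pair of cusps sharing a tetrahedron, and connectedness of $\widehat M$ propagates the relation to all pairs by telescoping along a chain of tetrahedra.

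Fix a cusp $c_0$. Cusp transitivity implies that every element of $C_0^n/\mathrm{Im}(\alpha^*)$ can be written as $c_0\otimes v$ for some $v\in\Z^{n-1}$. Moreover, for every integral point $(t,k)$,
\[
0\equiv \alpha^*(t,k)\equiv c_0\otimes(e_{t_0}+e_{t_1}+e_{t_2}+e_{t_3})\pmod{\mathrm{Im}(\alpha^*)},
\]
so the cokernel is a quotient of $\Z^{n-1}/R$, where $R$ is the sublattice generated by all vectors $e_{t_0}+e_{t_1}+e_{t_2}+e_{t_3}$ with $t_i\geq 0$, $\sum t_i=n$, and no $t_i$ equal to $n$. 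It remains to prove $R=\ker\sigma$, and I would do this by induction on $j$, showing $je_1-e_j\in R$ for $2\leq j\leq n-2$: the tuple $(j-2,1,1,n-j)$ produces $e_{j-2}+2e_1+e_{n-j}\in R$, which, combined with the inductive hypothesis $e_{j-2}\equiv(j-2)e_1\pmod R$ and the relation $e_{n-j}+e_j\in R$ (from $(n-j,j,0,0)$), yields $je_1-e_j\in R$. One then derives $ne_1\in R$ from the tuple $(n-3,1,1,1)$ together with the previous cases, and obtains $(n-1)e_1-e_{n-1}\in R$ by combining $ne_1\in R$ with $e_1+e_{n-1}\in R$. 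Since these generate $\ker\sigma$, this forces $R=\ker\sigma$ and $\mathrm{coker}(\alpha^*)=\Z/n\Z$.

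The main technical obstacle is this final induction: one must carefully verify that each required $4$-tuple has nonnegative entries summing to $n$ and is not a vertex point, which restricts the range of applicability and forces the small cases $n=2,3$ (where the induction is essentially vacuous) and the boundary case $j=n-1$ to be treated separately by direct combinations of the basic relations, as indicated above.
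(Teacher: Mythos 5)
The paper does not prove this lemma; it cites it as a result from Garoufalidis--Zickert~\cite{GaroufalidisZickert} and uses it only as a black box to set up the proof in Section~\ref{sec:AlphaStarBasicGeneral} that $\alpha^*$ is basic. There is therefore no proof in the paper to compare yours against. (Also, the wording ``surjective with cokernel $\Z/n\Z$'' is an abuse: what is meant is that $\mathrm{Im}(\alpha^*)$ has finite index $n$ in $C^n_0$, not that $\alpha^*$ is onto.)

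Your argument is a reasonable self-contained proof and appears correct. The two-sided strategy is clean: $\bar\phi=\sigma\circ(\pi\otimes\mathrm{id})$ kills $\mathrm{Im}(\alpha^*)$ and surjects onto $\Z/n\Z$, giving the lower bound, while cusp transitivity collapses the cokernel to $\Z^{n-1}/R$ with $R$ generated by the vectors $e_{t_0}+e_{t_1}+e_{t_2}+e_{t_3}$. In fact your reduction gives an isomorphism $\mathrm{coker}(\alpha^*)\cong\Z^{n-1}/R$, not merely a surjection onto a quotient: $K:=\langle(a-b)\otimes e_l\rangle$ is $I_0\otimes\Z^{n-1}$ where $I_0$ is the augmentation ideal of $C_0$, so $C^n_0/K\cong\Z^{n-1}$, and modulo $K$ the generators $\alpha^*(t,k)$ map to exactly the generators of $R$; cusp transitivity supplies $K\subseteq\mathrm{Im}(\alpha^*)$. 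A few points should be made explicit for the argument to be airtight. First, the telescoping step for cusp transitivity relies on connectedness of the $1$-skeleton of $\widehat M$. Second, the tuples $(j-2,1,1,n-j)$ and $(n-j,j,0,0)$ are only valid integral points when all entries lie in $\{0,\dots,n-1\}$, which is why the induction is confined to $2\leq j\leq n-2$ and the cases $n\in\{2,3\}$ and $j=n-1$ require separate handling; you flag this correctly, but the $n=2,3$ cases should actually be written out (for $n=2$ the only relation is $2e_1\in R$, and for $n=3$ one has $e_1+e_2,3e_1\in R$, both giving $R=\ker\sigma$ directly). Third, one should record the determinant computation showing that $ne_1$ together with $je_1-e_j$ for $2\leq j\leq n-1$ spans a sublattice of index $n$ in $\Z^{n-1}$, so that producing these elements of $R$ really does force $R=\ker\sigma$. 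With these details written out the proof is valid.
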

By selecting an ordering of the natural generators of $C^n_0$ and $C^n_1$, we regard $\alpha$ and $\alpha^*$ as matrices. The following is an elementary consequence of~\eqref{eq:summaryof1to1GeneralN}.
\begin{lemma}\label{lemma:ActionOnPtolemy}
The diagonal action of $T=(\C^*)^{v(n-1)}$ on $P_n(\T)$ and the corresponding action on the coordinate ring $\mathcal O$ of $P_n(\T)$ are given, respectively, by 
\begin{equation}
\pushQED{\qed}
(xc)_t=(\prod_{j=1}^{v(n-1)} x_j^{\alpha_{ij}})c_t,\qquad x(c^w)=\prod_{j=1}^{v(n-1)} x_j^{\alpha^*(w)_j}c^w.\qedhere
\popQED
\end{equation}
\end{lemma}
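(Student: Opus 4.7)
The plan is to use the one-to-one correspondence~\eqref{eq:summaryof1to1GeneralN} to describe the diagonal action intrinsically in terms of decorations, in direct analogy with Remark~\ref{rm:IntrinsicDiagonalAction}. An element $(D_1,\dots,D_v)\in T=D^v$ acts on a decoration by sending the coset $gN$ at any vertex with cusp $x_i$ to $gD_{x_i}N$, and translating this intrinsic action back to Ptolemy coordinates reduces to a single determinantal calculation.

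Recall from~\cite{GaroufalidisThurstonZickert} that, under the bijection~\eqref{eq:summaryof1to1GeneralN}, the Ptolemy coordinate $c_{t,k}$ of an integral point $t=(t_0,t_1,t_2,t_3)$ of simplex $k$ is (up to sign) the $n\times n$ determinant
\[
c_{t,k}=\det\bigl(g_0e_1,\dots,g_0e_{t_0},\;g_1e_1,\dots,g_1e_{t_1},\;g_2e_1,\dots,g_2e_{t_2},\;g_3e_1,\dots,g_3e_{t_3}\bigr),
\]
where $g_i$ decorates vertex $i$. Writing $D_{x_i}=\diag(a_1^{(x_i)},a_2^{(x_i)}/a_1^{(x_i)},\dots,1/a_{n-1}^{(x_i)})$ with the conventions $a_0^{(x_i)}=a_n^{(x_i)}=1$ (the latter being exactly the $\SL(n,\C)$ constraint), the replacement $g_i\mapsto g_iD_{x_i}$ scales the $r$th column of the vertex-$i$ block by $a_r^{(x_i)}/a_{r-1}^{(x_i)}$. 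The product of these scalings telescopes to $a_{t_i}^{(x_i)}$, so multiplying the four vertex contributions shows that $c_{t,k}$ is rescaled by $\prod_{i=0}^{3}a_{t_i}^{(x_i)}$.

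Under the identification of the $v(n-1)$ coordinates of $T$ with pairs (cusp, index $r\in\{1,\dots,n-1\}$), and with the conventions $e_0=e_n=0$, this exponent vector reads exactly as $\alpha^*(t,k)=\sum_i x_i\otimes e_{t_i}\in C_0^n$. Since $\alpha$ and $\alpha^*$ are transposes under the canonical identifications $C_i^n\cong(C_i^n)^*$, we obtain the first formula. The action on monomials is then a formal consequence: $x(c^w)=\prod_t(xc)_t^{w_t}=\prod_j x_j^{\sum_t w_t\alpha_{tj}}c^w=\prod_j x_j^{\alpha^*(w)_j}c^w$.

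The main point requiring care---though not a real obstacle---is bookkeeping: one must verify the telescoping of the diagonal entries, extract the boundary conventions $a_0=a_n=1$ from $D\in\SL(n,\C)$, and carefully match the ordering of the generators of $C_0^n$ to the parametrization of $T$. Beyond this, no substantive new input is needed, which is consistent with the lemma being flagged as an elementary consequence of~\eqref{eq:summaryof1to1GeneralN}.
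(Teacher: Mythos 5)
Your argument is correct and is precisely the fleshing-out of what the paper leaves implicit: the lemma is stated without proof as ``an elementary consequence of~\eqref{eq:summaryof1to1GeneralN},'' and your derivation --- transfer the intrinsic action on decorations through the determinantal formula for Ptolemy coordinates, telescope the diagonal scalings to get the exponent vector $\sum_i x_i\otimes e_{t_i}$, then pass to monomials formally --- is exactly the computation the authors have in mind. The one bookkeeping detail you correctly flag, the boundary convention $a_0^{(x_i)}=a_n^{(x_i)}=1$ coming from the $\SL(n,\C)$-constraint on $D$, is what makes the exponent vector land in $C_0^n=C_0\otimes\Z^{n-1}$ rather than $C_0\otimes\Z^{n+1}$, and your identification with $\alpha^*(t,k)$ is consistent with the paper's definition of $\alpha^*$ and the duality $\alpha^*_{jt}=\alpha_{tj}$.
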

\begin{corollary} The ring of invariants $\mathcal O^T$ is generated by $c^{w_1},\dots,c^{w_r}$, where $w_1,\dots,w_r$ are a basis for $\Ker(\alpha^*)$, and $r=\rank(C^n_1)-\rank(C^n_0)$.
\end{corollary}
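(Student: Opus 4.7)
The plan is to combine Lemma~\ref{lemma:ActionOnPtolemy} with a standard weight-space decomposition argument for torus actions, and to read off the rank of the kernel from Lemma~\ref{lemma:CokernelN}.

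First, I would use the formula $x(c^w) = (\prod_j x_j^{\alpha^*(w)_j}) c^w$ from Lemma~\ref{lemma:ActionOnPtolemy} to observe that each Laurent monomial $c^w \in \mathcal O$ is a $T$-eigenvector with character $x \mapsto \prod_j x_j^{\alpha^*(w)_j}$. Since the coordinates of $T = (\C^*)^{v(n-1)}$ are algebraically independent, this character is trivial precisely when $\alpha^*(w) = 0$. Hence $c^w$ is $T$-invariant if and only if $w \in \Ker(\alpha^*)$. Note that this makes sense for all $w \in C_1^n$ (possibly with negative entries) because the Ptolemy variety lives in the open locus where the Ptolemy coordinates are nonzero, so the $c_{t,k}$ are units in $\mathcal O$.

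Next, since $T$ is a torus, $\mathcal O$ decomposes as a direct sum of weight spaces indexed by the character lattice of $T$; the Ptolemy relations are themselves $T$-homogeneous (because $T$ preserves the Ptolemy variety), so this decomposition descends to the quotient defining $\mathcal O$. The subring $\mathcal O^T$ is then the weight-zero summand, and by the first step it is spanned by the invariant Laurent monomials $\{c^w : w \in \Ker(\alpha^*)\}$. Choosing a $\Z$-basis $w_1, \dots, w_r$ of $\Ker(\alpha^*)$, every element of $\Ker(\alpha^*)$ has the form $\sum a_i w_i$ with $a_i \in \Z$, so every invariant Laurent monomial factors as $\prod_i (c^{w_i})^{a_i}$. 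This exhibits $c^{w_1}, \dots, c^{w_r}$ (together with their inverses, which are themselves invariant Laurent monomials) as generators of $\mathcal O^T$ as a $\C$-algebra.

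Finally, the rank formula $r = \rank(C_1^n) - \rank(C_0^n)$ is immediate from Lemma~\ref{lemma:CokernelN}: since the cokernel of $\alpha^*$ is the finite group $\Z/n\Z$, the map $\alpha^* \otimes_\Z \Q$ is surjective, and rank-nullity over $\Q$ gives the stated formula. There is no real obstacle here; the one conceptual point to keep straight is that one must work with \emph{Laurent} monomials, since $\mathcal O$ is a localization of a polynomial ring, and this is precisely what makes the full kernel $\Ker(\alpha^*)$ -- rather than some nonnegative sub-semigroup -- the correct indexing object. This is directly parallel to the $n=2$ argument giving Corollary~\ref{cor:RingOfInvs}.
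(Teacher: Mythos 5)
Your proof is correct and fills in the weight-space argument that the paper leaves implicit: the corollary is stated with no proof, exactly paralleling the $n=2$ case in Corollary~\ref{cor:RingOfInvs}, and your route through Lemma~\ref{lemma:ActionOnPtolemy} (monomials are $T$-eigenvectors with character encoded by $\alpha^*$) and Lemma~\ref{lemma:CokernelN} (finite cokernel gives the rank count) is the one the paper intends. Your observation that $\mathcal O$ is a localization in which the Ptolemy coordinates are units -- so that Laurent monomials $c^w$ with $w\in\Ker(\alpha^*)$ of arbitrary sign make sense and the inverses $c^{-w_i}$ are themselves invariant -- is a useful clarification of a small looseness in the statement's phrase ``generated by $c^{w_1},\dots,c^{w_r}$.''
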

\begin{definition}
The \emph{Ptolemy field} of a Ptolemy assignment $c\in P_n(\T)$ is defined as
\begin{equation}
k_c=\Q(c^{w_1},\dots,c^{w_r})
\end{equation}
where $w_1,\dots w_r$ are (integral) generators of $\Ker(\alpha^*)$. 
\end{definition}
The following is proved in Section~\ref{sec:AlphaStarBasicGeneral}.
\begin{proposition}
The map $\alpha^*\colon C^n_1\to C^n_0$ is basic.\qed
\end{proposition}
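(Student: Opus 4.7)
The plan is to adapt the graph-theoretic case analysis used for $n=2$ in Section~\ref{sec:AlphaStarBasic} to the higher-rank setting. The first observation is intrinsic: define a $\Z/n\Z$-valued ``level'' homomorphism $\bar\phi\colon C^n_0\to\Z/n\Z$ by $\bar\phi(x_i\otimes e_\ell)=\ell\bmod n$. For any integral point $(t,k)\in\dot\Delta^3_n(\Z)$ we have $\bar\phi(\alpha^*(t,k))=\sum_i t_i \equiv n\equiv 0\pmod n$, so the image of $\alpha^*$ lies in $\ker(\bar\phi)$, which has index exactly $n$ in $C^n_0$. By Lemma~\ref{lemma:CokernelN}, the image equals $\ker(\bar\phi)$. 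Since this image has full rank $v(n-1)$, the map is basic if and only if one can exhibit $v(n-1)$ standard generators of $C^n_1$ whose images span $\ker(\bar\phi)$, or equivalently, whose corresponding $v(n-1)\times v(n-1)$ submatrix of $\alpha^*$ has determinant $\pm n$.

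The construction of this submatrix follows the three cases of Proposition~\ref{prop:Basic}, depending on the $1$-skeleton of $\widehat M$. In each case I would begin with a maximal tree $G$ of the $1$-skeleton of $\widehat M$: for each edge of $G$, include all $n-1$ edge integral points, yielding $(v-1)(n-1)$ columns. One then needs $n-1$ additional columns, chosen from the integral points lying on the ``extra'' structure (a self-edge for Case~2, or the third edge of a face giving a $3$-cycle for Case~3, and any integral point for the single-cusp Case~1). When $n\geq 3$, however, one cannot simply use the remaining edge integral points on the self-edge: for a self-edge at cusp $a$ the integral point at level $\ell$ maps to $x_a\otimes(e_\ell+e_{n-\ell})$, so those $n-1$ columns span a sublattice of rank only $\lceil n/2\rceil$ in $x_a\otimes\Z^{n-1}$. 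This forces us to supplement the chosen set with face integral points on a face of $\T$ that is incident to the extra $1$-cell; there are $\binom{n-1}{2}$ such face points per face, each contributing a vector of the form $x_a\otimes e_{t_a}+x_b\otimes e_{t_b}+x_c\otimes e_{t_c}$ to the image.

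With the columns chosen, I would arrange the rows of the $v(n-1)\times v(n-1)$ submatrix by traversing the cusps in tree order and, within each cusp, by level. The tree edges contribute blocks whose non-zero entries are two $1$'s per column on the blocks of the two incident cusps (at complementary levels), and an inductive expansion along the tree reduces the determinant to that of the submatrix corresponding to the extra structure restricted to a single cusp (Case~2) or a three-cusp triangle (Case~3). The main obstacle is precisely this reduced determinant computation: one must verify that the $(n-1)\times(n-1)$ matrix formed by the self-edge integral points together with a carefully chosen subset of face integral points has determinant $\pm n$, and analogously for the $3$-cycle case where the $2(n-1)+\binom{n-1}{2}$ boundary-cycle columns interact with the face columns. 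I expect this combinatorial determinant to reduce, by elementary row operations using the level-pair symmetry $e_\ell\leftrightarrow e_{n-\ell}$, to the determinant of a small matrix of binomial-coefficient type whose value is $\pm n$; establishing this cleanly for all $n$ simultaneously is the crux of the argument.
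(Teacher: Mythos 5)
Your high-level strategy matches the paper's: use Lemma~\ref{lemma:CokernelN} to reduce the problem to exhibiting $v(n-1)$ integral points whose $\alpha^*$-columns form a matrix of determinant $\pm n$, then split into the same three cases by the combinatorics of the $1$-skeleton of $\widehat M$, take the $n-1$ edge integral points on each tree edge, and use face integral points on a face adjacent to the extra $1$-cell. Your observation that self-edge points alone fail for $n>2$ because the level-$\ell$ point maps to $x_a\otimes(e_\ell+e_{n-\ell})$ is the correct reason face points are forced (though your rank count $\lceil n/2\rceil$ should be $\lfloor n/2\rfloor$ for $n$ odd). The role of Lemma~\ref{lemma:BlockDeterminant} in peeling off the tree blocks is also what the paper does.

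However, there is a genuine gap where you acknowledge it yourself: you do not specify which face integral points to take, and you do not compute the reduced determinant, which you explicitly call ``the crux of the argument.'' The paper's choices are concrete and give a matrix that is emphatically \emph{not} of binomial-coefficient type as you conjecture: after peeling off the tree, the remaining $(n-1)\times(n-1)$ block is, up to reordering rows, of the form $I_{n-1}+R_{n-1}-S_{n-1}$ (or $I_k+R_k+T_k-S_k$ in the one-cusp case), where $R$ has a single row of $1$'s and $S$ is a sub-diagonal shift; its determinant $n$ follows by Laplace expansion along the last column (Lemma~\ref{lemma:DeterminantIdentity}). The column selection that produces this shape is also different in scale from what you describe: in the $3$-cycle case the paper supplements the tree points with only $n-1$ face points on a single face of the $3$-cycle, not $2(n-1)+\binom{n-1}{2}$ boundary columns. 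So while the architecture of your argument is right, the part that actually has mathematical content --- a concrete selection of columns together with an explicit determinant evaluation --- is missing, and your sketch of that evaluation points in the wrong direction.
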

\begin{corollary}
Let $p_1,\dots,p_{(n-1)v}$ be integral points that are basic generators of $C^n_1$. The ring $\mathcal O^T$ is generated by the Ptolemy relations together with the relations $c_{p_1}=\cdots=c_{p_{(n-1)v}}=1$. Equivalently, the reduced Ptolemy variety is isomorphic to the subvariety of $P_n(\T)$ consisting of Ptolemy assignments with $c_{p_i}=1$.
\end{corollary}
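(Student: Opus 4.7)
The plan is to mirror the proof of Proposition~\ref{prop:ReducedPtolemyVariety}, using the basic property of $\alpha^*$ to perform an explicit change of coordinates on the Laurent polynomial ring underlying $\mathcal{O}$, and then read off both $\mathcal{O}^T$ and the restriction to $\{c_{p_i}=1\}$ in the new coordinates.

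First I would establish that $\{w_1,\dots,w_r,\, p_1,\dots,p_{(n-1)v}\}$ is a $\Z$-basis of $C^n_1$, where $w_1,\dots,w_r$ is a $\Z$-basis of $\Ker(\alpha^*)$ as in the preceding Corollary. Since $p_1,\dots,p_{(n-1)v}$ are basic generators, $\alpha^*$ restricts to an isomorphism of $\Span_\Z(p_1,\dots,p_{(n-1)v})$ onto $\im(\alpha^*)$. This gives a splitting of the short exact sequence $0\to\Ker(\alpha^*)\to C^n_1\to\im(\alpha^*)\to 0$ of free abelian groups, hence the direct sum decomposition $C^n_1=\Ker(\alpha^*)\oplus\Span_\Z(p_1,\dots,p_{(n-1)v})$ at the integral (not merely rational) level. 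A rank count confirms $r+(n-1)v=\rank(C^n_1)$.

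Next, setting $y_j=c^{w_j}$ and $z_i=c_{p_i}$, the above basis change yields a ring isomorphism of the underlying Laurent polynomial ring (before quotienting by the Ptolemy ideal). I would show that every Ptolemy relation $c_\alpha c_\beta+c_\gamma c_\delta-c_\varepsilon c_\zeta=0$ is $T$-homogeneous: since $T$ preserves $P_n(\T)$, all three monomial terms carry the same $T$-weight, i.e.\ the same exponent vector in the $z_i$'s when rewritten. Factoring out that common monomial $z^u$ (a unit in the Laurent ring), the Ptolemy relation becomes a trinomial in the $y_j$'s alone. Let $I_y$ denote the ideal of $\C[y_1^{\pm1},\dots,y_r^{\pm1}]$ generated by these trinomials; then
\begin{equation}
\mathcal{O}\;\cong\;\C[z_1^{\pm1},\dots,z_{(n-1)v}^{\pm1}]\otimes_{\C}\bigl(\C[y_1^{\pm1},\dots,y_r^{\pm1}]/I_y\bigr),
\end{equation}
where $T$ acts trivially on the $y$-factor and acts on the $z$-factor through the character matrix $(\alpha_{p_i,j})$, which is the transpose of $\alpha^*$ restricted to the columns indexed by $J=\{p_1,\dots,p_{(n-1)v}\}$.

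By the basic property, this character matrix is invertible over $\Q$, so the only $T$-invariant Laurent monomial in the $z_i$'s is a constant. Hence $\mathcal{O}^T=\C[y^{\pm}]/I_y$. On the other hand, imposing $z_i=c_{p_i}=1$ in the same decomposition manifestly gives $\mathcal{O}/(c_{p_1}-1,\dots,c_{p_{(n-1)v}}-1)=\C[y^{\pm}]/I_y$. The composition $\mathcal{O}^T\hookrightarrow\mathcal{O}\twoheadrightarrow\mathcal{O}/(c_{p_i}-1)$ sends each $y_j$ to $y_j$, so it is the identity on $\C[y^{\pm}]/I_y$ and therefore an isomorphism, identifying the reduced Ptolemy variety with the claimed subvariety. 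The main obstacle is the $\Z$-basis step: one must avoid working only over $\Q$, since over $\Q$ the finite cokernel of $\alpha^*$ (of order $n$, by Lemma~\ref{lemma:CokernelN}) is invisible, and it is only the splitting provided by a \emph{basic} generating set that promotes this to a genuine integral direct sum decomposition and thereby makes the change of coordinates an isomorphism of Laurent polynomial rings rather than just of their rationalizations.
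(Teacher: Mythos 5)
Your proposal is correct and follows essentially the same route as the paper, which proves this corollary by repeating the proof of Proposition~\ref{prop:ReducedPtolemyVariety} word for word: the basic generators $p_1,\dots,p_{(n-1)v}$ together with a basis $w_1,\dots,w_r$ of $\Ker(\alpha^*)$ form an integral basis of $C^n_1$, every Ptolemy coordinate is then a unique Laurent monomial in the $c^{w_j}$'s and $c_{p_i}$'s, and the identification of $\mathcal O^T$ with the quotient by $c_{p_i}=1$ follows from the description of $\mathcal O^T$ as generated by the $c^{w_j}$'s. Your additional details (the $T$-homogeneity of the Ptolemy relations and the tensor factorization of $\mathcal O$) merely make explicit what the paper leaves implicit.
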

\begin{proof}
This follows the proof of Proposition~\ref{prop:ReducedPtolemyVariety} word by word.
\end{proof}
\begin{remark}
This is how the Ptolemy varieties and Ptolemy fields at~\cite{Unhyperbolic} are computed.
\end{remark}
\subsection{Representations}
\begin{definition}\label{def:BoundaryNonDeg}
Let $\rho$ be an $(\SL(n,\C),N)$-representation, and let $I_i$ denote the image of the peripheral subgroup corresponding to cusp $i$. We say that $\rho$ is \emph{boundary-non-degenerate} if each $I_i$ has an element whose Jordan canonical form has a single (maximal) Jordan block.
\end{definition}
\begin{proposition}
The map 
\begin{equation}
\mathcal R\colon P_n(\T)_\rd\to \big\{(\SL(n,\C),N)\text{-representations}\big\}
\big/\Conj.
\end{equation}
maps onto the generic representations, and the preimage of a generic boundary-non-degenerate representation consists of a single point.
\end{proposition}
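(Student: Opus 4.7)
The plan is to mirror Section~\ref{sec:PtolemyToRepProof}, with ``peripheral image contains a regular unipotent'' taking the role of ``peripheral image is non-trivial''. First, I will check that $\mathcal R$ factors through $P_n(\T)_\rd$: under the bijection~\eqref{eq:summaryof1to1GeneralN}, the higher-rank analogue of Remark~\ref{rm:IntrinsicDiagonalAction} (already implicit in the discussion preceding Lemma~\ref{lemma:ActionOnPtolemy}) realises the diagonal action on decorations by $gN\mapsto gD_iN$ at a vertex above cusp $i$, which changes the decoration but not the underlying representation. Surjectivity onto the generic representations is then immediate from the definition of genericity and the bijection~\eqref{eq:summaryof1to1GeneralN}, since generic representations are by definition exactly those admitting a generic decoration.

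For the preimage statement, let $\rho$ be generic and boundary-non-degenerate. A decoration is an equivariant map $\mathcal D\colon \widehat{\widetilde M}^{(0)}\to \SL(n,\C)/N$, determined by its values $\mathcal D(\widetilde e_i)=g_iN$ on lifts of the $v$ cusps, subject to $J_i:=g_i^{-1}\rho(\Stab(\widetilde e_i))g_i\subset N$. The freedom in the coset $g_iN$ is therefore the set of $hN\in\SL(n,\C)/N$ with $h^{-1}J_ih\subset N$. The key reduction is to prove that this set equals $B/N$, where $B=DN$ is the Borel subgroup of upper-triangular matrices; granted this, the total freedom across all cusps is $D^v=T$, i.e.\ precisely the diagonal action, so the preimage of $\rho$ in $P_n(\T)_\rd$ is a single point.

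The main technical input, and the main obstacle, is the Lie-theoretic lemma: if $u\in N$ is a regular unipotent and $h^{-1}uh\in N$, then $h\in B$. I will deduce this from the classical fact that a regular unipotent lies in a \emph{unique} Borel subgroup of $\SL(n,\C)$: indeed $u$ is contained in the unique Borel $B$, while $h^{-1}uh$ lies both in $B\supset N$ and in the conjugate Borel $h^{-1}Bh$, so uniqueness forces $h^{-1}Bh=B$, and Borel subgroups being self-normalising yields $h\in B$. Boundary-non-degeneracy supplies a regular unipotent $u\in J_i$ at every cusp, so any admissible $h$ satisfies $h^{-1}uh\in N$ and hence $h\in B$; conversely $B$ normalises $N$, so every $h\in B$ is admissible. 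This completes the identification of the freedom with the torus $T$ and hence the proof. In more elementary terms, the uniqueness of the Borel containing $u$ reduces to the computation $Z_{\SL(n,\C)}(u_0)=\C[u_0]\cap\SL(n,\C)\subset B$ for the standard Jordan block $u_0$, combined with $B$-transitivity on the set of regular unipotents of $N$; it is the need for the full $B$ (rather than $N$, whose orbits on regular unipotents are strictly smaller) that makes the diagonal torus $D$ indispensable.
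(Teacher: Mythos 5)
Your proof is correct and follows the same route the paper intends: the paper's proof is simply ``identical to the proof in Section~\ref{sec:PtolemyToRepProof} for $n=2$'', where the freedom in the decoration at each cusp reduces to the diagonal torus, and you correctly identify and prove the Lie-theoretic fact this requires in rank $>1$ — that a regular unipotent lies in a unique Borel, so $\{hN : h^{-1}J_ih\subset N\} = B/N = D$ when $J_i$ contains such an element. This is precisely the content hidden behind the paper's ``identical'' and behind the boundary-non-degeneracy hypothesis.
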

\begin{proof}
The proof is identical to the proof in Section~\ref{sec:PtolemyToRepProof} for $n=2$.
\end{proof}
\begin{conjecture}
The Ptolemy field of a generic, boundary-non-degenerate representation is equal to its trace field.
\end{conjecture}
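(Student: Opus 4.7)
The plan is to upgrade the proof of Theorem~\ref{thm:PtolemyEqTrace} to higher rank by replacing every $\SL(2,\C)$ calculation with its $\SL(n,\C)$ analogue. As in rank two, the inclusion $k_\rho \subseteq k_c$ is free: the lifted natural cocycle attached to $c \in P_n(\T)_\rd$ is built from rational expressions in the Ptolemy coordinates, so $\rho$ is defined over $k_c$. All the work goes into the reverse inclusion.

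To set up, I would fix basic generators $p_1,\ldots,p_{(n-1)v}$ of $\alpha^{*}$ and use the diagonal action to normalize $c_{p_i}=1$, so that $k_c$ is generated by the remaining Ptolemy coordinates. Boundary-non-degeneracy provides, for each cusp $i$, a regular unipotent element $M_i$ in the peripheral subgroup; after conjugation within $N$ we may assume $\gamma(M_i)$ has nonzero superdiagonal entries $m_i^{(1)},\ldots,m_i^{(n-1)}$, generalizing the scalar $m_i$ from the rank-two argument and supplying the $n-1$ free parameters per cusp that match the torus direction of the diagonal action.

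The next step is a higher-rank analogue of Lemma~\ref{lemma:AllMiInKRho}: using loops supported on the basic-generator subgraph with peripheral insertions, deduce that every $m_i^{(k)}$ lies in $k_\rho$ by induction across the tree structure, separately treating the $1$-cycle and $3$-cycle configurations of Section~\ref{sec:AlphaStarBasic}. With that in hand, for each non-basic integral point $t$ I would build an edge-path loop $p \ast \varepsilon$ in $\widehat M$ whose $\gamma$-image is a product of long-edge matrices (the higher-rank analogues of $\alpha$) interleaved with short-edge unipotents in $N$ (the analogues of $\beta$), and modify it by inserting arbitrary integer powers of the peripheral $M_i$ at each cusp visited. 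The resulting family of traces is a polynomial in the shift parameters, and an iterated finite-difference operator in the spirit of Lemma~\ref{lemma:Deltas} should extract a single top-multidegree monomial containing $c_t$ as a factor, all other factors already lying in $k_\rho$.

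The main obstacle is the higher-rank analogue of Lemma~\ref{lemma:TraceFormula}. For $n \geq 3$ the group $N$ is non-abelian, so the rank-two miracle $\beta(x)\beta(m) = \beta(x+m)$ fails, and a long product of long-edge and short-edge matrices produces a substantially more intricate trace polynomial in the matrix entries of the short-edge factors. One must isolate a specific multilinear coefficient whose finite-difference extraction produces $c_t$ times a nonzero quantity already in $k_\rho$. The regularity of $M_i$ is the key ingredient that should make this possible: because $\gamma(M_i)$ has a single maximal Jordan block, the shifts generated by inserting powers of $M_i$ move the free parameters independently along all $n-1$ abelianized directions of $N$, supplying the algebraic independence needed to invert the trace polynomial. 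Pinning this down — identifying exactly which monomial in the shift parameters to read off, and verifying that its coefficient is nonzero and lies in $k_\rho$ — is the genuinely new computation that replaces the one-line identity $\Tr(\beta(x)\alpha(c)) = xc$ from the rank-two case, and is where I expect the proof to be technically hardest.
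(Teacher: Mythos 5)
First, a point of status: the paper states this as a \emph{conjecture} and does not prove it, so there is no reference argument against which to check your proposal. Your proposal is, and is presented by you as, a plan rather than a proof --- the step you flag as ``the genuinely new computation'' is left open, and that open step is precisely the content of the conjecture. So there is a genuine gap, which you have honestly located.

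The strategy itself --- normalize using basic generators of $\alpha^*$ via Proposition~\ref{prop:ReducedPtolemyVariety}, establish a higher-rank analogue of Lemma~\ref{lemma:AllMiInKRho} by induction over the tree-with-cycle, then extract each remaining Ptolemy coordinate by an iterated finite difference of traces of loops threaded with peripheral insertions --- is the natural generalization of the $n=2$ argument, and the easy inclusion $k_\rho\subseteq k_c$ does go through unchanged. However, the heuristic you offer for why the hard direction should close contains a conceptual error worth flagging. You claim that because $\gamma(M_i)$ has a single maximal Jordan block, inserting its powers ``moves the free parameters independently along all $n-1$ abelianized directions of $N$.'' This is not so: the powers $\gamma(M_i)^{b}$ trace out a single one-parameter curve in $N$, and in the abelianization $N/[N,N]\cong\C^{n-1}$ they move along the single line $b\cdot[\gamma(M_i)]$, not along $n-1$ independent directions. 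The $n-1$ degrees of freedom one hopes to exploit come instead from the fact that the entries of $\gamma(M_i)^{b}$ are polynomials in $b$ of degree up to $n-1$; but this means the trace of the modified loop is a polynomial of high degree in each shift parameter $b_i$ rather than multilinear as in rank two, so the clean finite-difference extraction of Lemma~\ref{lemma:Deltas}/Lemma~\ref{lemma:TraceFormula} does not apply as stated. Whether the $n-1$ Taylor coefficients of this one-parameter family can be disentangled by traces so as to recover the individual Ptolemy coordinates is exactly the open question, and neither the paper nor your sketch settles it.
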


\begin{remark}Much of the theory also works for $\PSL(n,\C)$-representations by means of obstruction classes in $H^2(\widehat M;\Z/n\Z)$. When $n$ is even, obstruction classes in $H^2(\widehat M;\Z/2\Z)$ were defined in~\cite{GaroufalidisThurstonZickert} for representations in $p\SL(n,\C)=\SL(n,\C)/\pm I$. 
For $\PSL(n,\C)$-representations, both the Ptolemy field and the trace field are only defined up to $n$th roots of unity.
The generalized obstruction classes are used on the website~\cite{Unhyperbolic} and will be explained in a forthcoming publication. 
\end{remark}

\subsection{Proof that $\alpha^*$ is basic}\label{sec:AlphaStarBasicGeneral}
By Lemma~\ref{lemma:CokernelN} we need to prove the existence of integral points such that the corresponding columns of $\alpha^*$ form a matrix of determinant $\pm n$. As in Section~\ref{sec:AlphaStarBasic} we split the proof into three cases.

\subsubsection{Basic matrix algebra}
Let $I_k$ be the identity matrix, $R_{k}$ the sparse matrix whose first row contains entirely of $1$'s, $S_{k}$ the sparse matrix whose lower diagonal consists of $1$'s ($S_1=0$), and $T_k$ the sparse matrix whose lower right entry is $1$. The index $k$ denotes that the matrices are $k\times k$.
\begin{equation}
R_3=\begin{pmatrix}1&1&1\\&&\\&&\end{pmatrix},\quad S_3=\begin{pmatrix}&&\\1&\\&1\end{pmatrix},\quad T_3=\begin{pmatrix}&&\\&&\\&&1\end{pmatrix}
\end{equation}
\begin{lemma}\label{lemma:DeterminantIdentity} We have
\begin{equation}
\det(I_k+R_k-S_k)=k+1,\qquad \det(I_k+R_k+T_k-S_k)=2k+1.
\end{equation}
\end{lemma}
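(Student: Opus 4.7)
The plan is to handle the first identity by a single column operation, and then derive the second identity from the first by multilinearity of the determinant.

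For the first identity, I would add columns $2,\dots,k$ to column $1$. In the resulting matrix, the $(1,1)$-entry becomes $2+(k-1)\cdot 1=k+1$, while each row $i\ge 2$ has exactly two nonzero entries in the original columns, namely a $-1$ coming from $-S_k$ at position $(i,i-1)$ and a $1$ coming from $I_k$ at position $(i,i)$; these cancel, so the new column $1$ becomes $(k+1,0,\dots,0)^T$. Expanding along column $1$, what remains is the determinant of the $(k-1)\times(k-1)$ minor obtained by deleting row $1$ and column $1$, which is lower triangular with $1$'s on the diagonal and $-1$'s on the subdiagonal, hence has determinant $1$. This gives $\det(I_k+R_k-S_k)=k+1$.

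For the second identity, observe that $I_k+R_k+T_k-S_k$ differs from $I_k+R_k-S_k$ only in the $(k,k)$-entry (namely $2$ instead of $1$). Since the determinant is linear in each entry,
\begin{equation*}
\det(I_k+R_k+T_k-S_k)=\det(I_k+R_k-S_k)+C_{kk},
\end{equation*}
where $C_{kk}$ is the $(k,k)$-cofactor of $I_k+R_k-S_k$. The minor underlying $C_{kk}$ is obtained by deleting the last row and column of $I_k+R_k-S_k$, and this minor is exactly $I_{k-1}+R_{k-1}-S_{k-1}$. By the first identity applied to $k-1$, it has determinant $k$, and the sign $(-1)^{2k}=1$ makes $C_{kk}=k$. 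Therefore $\det(I_k+R_k+T_k-S_k)=(k+1)+k=2k+1$.

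There is no real obstacle here: the only thing to verify carefully is the bookkeeping in the column-sum step (namely that the subdiagonal $-1$'s cancel the diagonal $1$'s in every row below the first), after which the cofactor/inductive argument for the second identity is immediate.
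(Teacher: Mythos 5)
Your proof is correct. The paper's own proof is very terse, merely suggesting one "expand the determinant using the last column," which for $I_k+R_k-S_k$ leads to an inductive recursion (the last column has nonzero entries only at positions $(1,k)$ and $(k,k)$, so expansion yields a sign term $\pm 1$ plus $M_{kk}=\det(I_{k-1}+R_{k-1}-S_{k-1})$). Your route is slightly different: adding columns $2,\dots,k$ to column $1$ collapses the first identity in a single step without induction, and you then get the second identity from the first via multilinearity in the $(k,k)$ entry. Both are elementary and equally short; the column-sum trick has the small advantage that the first identity comes out non-inductively and the structure of the cofactor $C_{kk}=\det(I_{k-1}+R_{k-1}-S_{k-1})$ then cleanly reuses it for the second. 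The bookkeeping you flag (cancellation of the subdiagonal $-1$ against the diagonal $1$ in every row $i\ge 2$, including row $2$ where the $-1$ sits in the original column $1$) is exactly right, and the sign $(-1)^{2k}=1$ on the cofactor is correct.
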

\begin{proof}
This follows e.g.~by expanding the determinant using the last column. The matrices $I_k+R_k-S_k$ are shown below for $k=1,2,3$ and $4$.
\begin{equation}
\begin{pmatrix}2\end{pmatrix},\qquad \begin{pmatrix}2&1\\-1&1\end{pmatrix},\qquad \begin{pmatrix}2&1&1\\-1&1&\\&-1&1\end{pmatrix},\qquad\begin{pmatrix}2&1&1&1\\-1&1&&\\&-1&1&\\&&-1&1\end{pmatrix}
\end{equation}
For $I_k+R_k+T_k-S_k$, the only difference is that the lower right entry is now $2$.
\end{proof}
\begin{lemma}\label{lemma:BlockDeterminant}
Let $A$, $B$, $C$, $D$ be $k\times k$, $k\times l$, $l\times k$, and $l\times l$ matrices, respectively, and let $M=\left(\begin{smallmatrix}A &B\\C&D\end{smallmatrix}\right)$. If $D$ is invertible, we have
\begin{equation}
\pushQED{\qed}
\det(M)=\det(D)\det(A-BD^{-1}C).\qedhere\popQED
\end{equation}
\end{lemma}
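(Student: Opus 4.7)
The plan is to prove the identity by exhibiting an explicit block factorization of $M$ and then taking determinants of the factors, which are triangular by blocks. Specifically, I would write
\begin{equation*}
\begin{pmatrix} A & B \\ C & D \end{pmatrix}
=
\begin{pmatrix} I_k & BD^{-1} \\ 0 & I_l \end{pmatrix}
\begin{pmatrix} A - BD^{-1}C & 0 \\ C & D \end{pmatrix},
\end{equation*}
which one checks by direct block multiplication: the $(1,1)$ block is $(A - BD^{-1}C) + BD^{-1}C = A$, the $(1,2)$ block is $0 + BD^{-1}D = B$, the $(2,1)$ block is $C$, and the $(2,2)$ block is $D$.

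Taking determinants, the first factor is block upper-triangular with identity blocks on the diagonal, so has determinant $1$. The second factor is block lower-triangular with diagonal blocks $A - BD^{-1}C$ and $D$, so has determinant $\det(A-BD^{-1}C)\cdot\det(D)$. Multiplying gives the claimed formula $\det(M) = \det(D)\det(A - BD^{-1}C)$.

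The only subtlety is justifying that the determinant of a block triangular matrix is the product of the determinants of its diagonal blocks; this is a standard fact that can be proved either by iterated cofactor expansion along the zero block or by observing that block triangular matrices form a multiplicatively closed set on which the map to $\det(\text{diag})$ is multiplicative on generators. I do not expect any real obstacle here, since everything reduces to a formal identity valid over any commutative ring in which $\det(D)$ is a unit, and the invertibility of $D$ is exactly what is needed to make $BD^{-1}$ and hence the factorization well defined.
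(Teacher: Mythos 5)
Your proof is correct and is essentially the same as the paper's: both exhibit a block-triangular factorization of $M$ whose factors have determinants $1$ (or $\det(D)$) and $\det(A-BD^{-1}C)$ (or times $\det I$), then multiply. The paper uses $\left(\begin{smallmatrix}I &B\\0&D\end{smallmatrix}\right)\left(\begin{smallmatrix}A-BD^{-1}C &0\\D^{-1}C&I\end{smallmatrix}\right)$ while you use $\left(\begin{smallmatrix}I &BD^{-1}\\0&I\end{smallmatrix}\right)\left(\begin{smallmatrix}A-BD^{-1}C &0\\C&D\end{smallmatrix}\right)$; this is just a cosmetic redistribution of the factor $D$ between the two matrices and involves no new idea.
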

\begin{proof}
This follows from the identity $\left(\begin{smallmatrix}A &B\\C&D\end{smallmatrix}\right)=\left(\begin{smallmatrix}I &B\\0&D\end{smallmatrix}\right)\left(\begin{smallmatrix}A-BD^{-1}C &0\\D^{-1}C&I\end{smallmatrix}\right)$.
\end{proof}
  
\subsubsection{One cusp}
Pick any face of $\T$ and consider the integral points shown in Figures~\ref{fig:OneCuspBasicOdd} and \ref{fig:OneCuspBasicEven}. Let $A_n$ be the $(n-1)\times(n-1)$ matrix formed by the corresponding columns of $\alpha^*$. The columns are ordered as shown in the figures, and the rows, i.e. the generators $x\otimes e_i$ of $C^n_0$, are ordered in the natural way (increasing in $i$). The following is an immediate consequence of the definition of $\alpha^*$.
\begin{lemma}
The matrix $A_n$ is given by
\begin{equation}
A_{2k+1}=\begin{pmatrix}I_k+R_k+T_k&I_k\\S_k&I_k\end{pmatrix},\qquad A_{2k}=\begin{pmatrix}2&0\cdots01&0\\0&I_{k-1}+R_{k-1}&I_{k-1}\\0&S_{k-1}&I_{k-1}\end{pmatrix}
\end{equation}
\end{lemma}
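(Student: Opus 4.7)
The plan is a direct verification from the definition of $\alpha^*$. Two preliminary observations make the proof essentially mechanical. First, in the one-cusp case all four vertex-labels $x_0,\ldots,x_3$ coincide with the single cusp $x$, so the defining formula $\alpha^*(t,k)=\sum_{i=0}^{3} x_i\otimes e_{t_i}$ collapses to $\alpha^*(t,k)=x\otimes\sum_{i=0}^{3} e_{t_i}$ (using the convention $e_0=0$). Under the identification $C^n_0\cong\Z^{n-1}$ by the basis $\{x\otimes e_j\}_{j=1}^{n-1}$, the column of $A_n$ associated to an integer point $t=(t_0,t_1,t_2,t_3)$ is therefore the multiplicity vector $(\#\{i:t_i=j\})_{j=1}^{n-1}$. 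Second, the integer points drawn in the figures all sit on a single face (say $t_3=0$) and split into an ``interior'' family of points of the form $(1,a,n-1-a,0)$ with all three remaining coordinates positive, and an ``edge'' family of points of the form $(a,n-a,0,0)$ with two of them zero, together with one extra ``midpoint'' point $(k,k,0,0)$ in the even case.

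With these observations the block pattern writes itself. For $n=2k+1$, the $a$-th interior point (for $a=1,\dots,k$) contributes a $1$ to each of the $e_1$-, $e_a$-, and $e_{n-1-a}$-rows: the $e_1$-entries assemble into the block $R_k$ (with a doubling when $a=1$), the $e_a$-entries into the diagonal $I_k$, the $e_{n-1-a}$-entries into the off-diagonal $S_k$ below, and the coincidence $a=n-1-a$ at $a=k$ supplies the extra $T_k$; meanwhile the $a$-th edge point contributes a $1$ to the $e_a$- and $e_{n-a}$-rows, producing the two $I_k$ blocks on the right. For $n=2k$, the midpoint $(k,k,0,0)$ has image $2(x\otimes e_k)$ lying in a single basis row, accounting for the isolated $2$ in the $(1,1)$-entry together with the zeros flanking it, and the remaining $2k-2$ interior and edge points reproduce the odd-case block pattern at rank $k-1$.

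There is no real obstacle: the lemma is flagged in the paper as an immediate consequence of the definition, and the calculation really is entry-by-entry. The only step requiring a moment of care is matching the ordering of rows used in the figures to the block ordering of $A_n$, specifically checking that the pairing $e_a\leftrightarrow e_{n-1-a}$ in the odd case, and $e_a\leftrightarrow e_{n-a}$ around $e_k$ in the even case, correctly aligns the top and bottom halves of each column. Once this bookkeeping is set up, the stated identities drop out by reading off the multiplicity counts term by term.
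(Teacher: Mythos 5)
Your overall plan—identifying the integral points chosen in the figures and reading off $\alpha^*$ column by column—is the right one, and your identification of the three families (interior points of the form $(1,a,n-1-a,0)$, edge points $(a,n-a,0,0)$, and the even-case midpoint $(k,k,0,0)$) is a reasonable reconstruction. But there is a concrete gap in the step you explicitly flag as "bookkeeping," and as written the argument does not actually produce the stated matrices.

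If the rows really are ordered $e_1,\dots,e_{n-1}$ (increasing in $i$, as both you and the paper's text assert), then the $e_{n-1-a}$-entries of your interior columns land at full-matrix row $2k-a$, i.e.\ at block position $(k-a,a)$ in the bottom-left $k\times k$ block, for $a=1,\dots,k-1$. That is the \emph{anti}-diagonal, not the sub-diagonal $S_k$. Likewise the edge columns put their second entry at block position $(k+1-a,a)$, giving an anti-diagonal permutation matrix rather than $I_k$. To get exactly the blocks $S_k$ and $I_k$ claimed in the lemma you need the bottom half of the rows to be taken in \emph{reverse} order, $e_{2k},e_{2k-1},\dots,e_{k+1}$ (this is the same device the paper uses explicitly in the multi-cusp case, where rows for the second cusp are listed as $x_2\otimes e_{n-1},\dots,x_2\otimes e_1$). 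The same issue recurs in the even case: since the first column of $A_{2k}$ is $(2,0,\dots,0)^{t}$ and comes from the point $(k,k,0,0)$, the first row must be $e_k$, not $e_1$, so again the ordering cannot be the naive increasing one. You gesture at this with the $e_a\leftrightarrow e_{n-1-a}$ pairing remark, but you do not resolve it, and the sentence "the $e_{n-1-a}$-entries into the off-diagonal $S_k$ below" is simply false under the increasing ordering. Since the downstream corollary only uses $\det A_n=\pm n$, a row permutation is harmless there, but the lemma as stated is an equality of matrices, so the ordering has to be pinned down correctly for the proof to be complete.
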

\begin{corollary}
The determinant of $A_n$ is $\pm n$.
\end{corollary}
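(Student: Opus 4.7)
The plan is to compute each determinant directly by means of the Schur complement formula of Lemma~\ref{lemma:BlockDeterminant}, reducing to the $(k\times k)$ determinant identities of Lemma~\ref{lemma:DeterminantIdentity}. Both matrices $A_{2k+1}$ and $A_{2k}$ are block matrices whose lower-right $k\times k$ (respectively $(k-1)\times(k-1)$) block is the identity, which makes Lemma~\ref{lemma:BlockDeterminant} especially clean to apply: the Schur complement reduces to $A - BC$.

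First I would handle the odd case. Writing $A_{2k+1}=\left(\begin{smallmatrix}A&B\\C&D\end{smallmatrix}\right)$ with $A=I_k+R_k+T_k$, $B=I_k$, $C=S_k$, $D=I_k$, Lemma~\ref{lemma:BlockDeterminant} gives
\begin{equation}
\det(A_{2k+1})=\det(I_k)\det\bigl((I_k+R_k+T_k)-I_k\cdot I_k\cdot S_k\bigr)=\det(I_k+R_k+T_k-S_k),
\end{equation}
and this equals $2k+1$ by the second identity of Lemma~\ref{lemma:DeterminantIdentity}.

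For the even case $A_{2k}$, I would first expand along the first column (whose only nonzero entry is the leading $2$) to peel off a factor of $2$, leaving the $(2k-2)\times(2k-2)$ matrix $\left(\begin{smallmatrix}I_{k-1}+R_{k-1}&I_{k-1}\\S_{k-1}&I_{k-1}\end{smallmatrix}\right)$. Applying Lemma~\ref{lemma:BlockDeterminant} exactly as above with $D=I_{k-1}$ reduces the determinant to $\det(I_{k-1}+R_{k-1}-S_{k-1})$, which by the first identity of Lemma~\ref{lemma:DeterminantIdentity} equals $k$. Multiplying, $\det(A_{2k})=\pm 2k$.

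There is essentially no obstacle: all the combinatorial work has been front-loaded into the two lemmas. The only thing to double-check is the block sizes and the fact that the Schur complement picks up exactly the matrices $I_k+R_k+T_k-S_k$ and $I_{k-1}+R_{k-1}-S_{k-1}$ that appear in Lemma~\ref{lemma:DeterminantIdentity}; this is a matter of matching definitions. Since $|{\pm n}|=n$, the cokernel of $\alpha^*$ having order $n$ (Lemma~\ref{lemma:CokernelN}) confirms that these integral points are basic generators.
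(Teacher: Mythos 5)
Your proof is correct and follows exactly the same route as the paper's one-line proof, which simply cites Lemma~\ref{lemma:BlockDeterminant} and Lemma~\ref{lemma:DeterminantIdentity}; you have just spelled out the Schur complement computation and the cofactor expansion in the even case explicitly. The bookkeeping of block sizes and the identification of the resulting matrices $I_k+R_k+T_k-S_k$ and $I_{k-1}+R_{k-1}-S_{k-1}$ are all accurate.
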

\begin{proof}
This follows from Lemma~\ref{lemma:BlockDeterminant} and Lemma~\ref{lemma:DeterminantIdentity}.
\end{proof}

\begin{figure}[htpb]
\centering
\begin{minipage}[c]{0.48\textwidth}
\scalebox{0.8}{\input{figures_gen/OneCuspBasicOdd.tex}}
\captionsetup{width=10cm}
\caption{Basic generators, $n=2k+1$.}\label{fig:OneCuspBasicOdd}
\end{minipage}		
\begin{minipage}[c]{0.48\textwidth}
\scalebox{0.8}{\input{figures_gen/OneCuspBasicEven.tex}}
\captionsetup{width=10cm}
\caption{Basic generators, $n=2k$.}\label{fig:OneCuspBasicEven}
\end{minipage}				
\end{figure}

\subsubsection{Multiple cusps, self edges}
Pick a face with a self edge, and extend to a maximal tree with $1$-cycle $G$ as in~Figure~\ref{fig:TreeWithCycle}. Let $T=G\setminus\varepsilon_1$, and let   
$B_n$ denote the matrix formed by the columns of $\alpha^*$ corresponding to the face points shown in Figure~\ref{fig:OneCycleBasic} together with the edge points on $T$.
We order the generators $x_i\otimes e_j$ of $C_0^n$ as follows
\begin{equation}
x_1\otimes e_1,\dots, x_1\otimes e_{n-1},\quad x_2\otimes e_{n-1},\dots ,x_2\otimes e_1
\end{equation}
with a similar scheme for the other vertices. The following is an immediate consequence of the definition of $\alpha^*$.

\begin{lemma}
The matrix $B_n$ is given by
\begin{equation}
B_n=\left(\begin{array}{c|c}
I_{n-1}+R_{n-1}& \\
S_{n-1}&I_T\otimes\Z^{n-1}\\\cline{1-1}
 0 &
\end{array}\right),
\end{equation}
where $I_T\otimes\Z^{n-1}$ is the matrix obtained from $I_T$ by replacing each non-zero entry by $I_{n-1}$.\qed
\end{lemma}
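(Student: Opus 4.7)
The lemma is a direct entrywise computation from the definition of $\alpha^*$ combined with the chosen ordering of the basis of $C_0^n$. The plan is to split the columns of $B_n$ into two groups---the $n-1$ face-point columns on the self-edge face, and the $(v-1)(n-1)$ columns coming from the $n-1$ integral edge points on each edge of $T$---and to verify each block separately.

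For the edge-point columns, consider an edge $\varepsilon \in T$ joining vertices $x_a$ and $x_b$. The $n-1$ integral points of $\varepsilon$ are of the form $(t,k)$ with $t = (n-j)v_a + jv_b$ for $j = 1, \ldots, n-1$, where $v_a, v_b$ are the two vertex vectors of $\Delta^3_n$. By definition, $\alpha^*$ sends this point to $x_a \otimes e_{n-j} + x_b \otimes e_j$. The ordering convention---increasing in $i$ at some endpoints, decreasing at the others---is chosen precisely so that, in the reordered basis, both contributions land in the $j$-th slot of their respective vertex blocks. Hence the $n-1$ columns for $\varepsilon$ form a $2\times 1$ block stack with an $I_{n-1}$ at row-block $x_a$, an $I_{n-1}$ at row-block $x_b$, and zero elsewhere. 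Taking all edges of $T$ together yields precisely the matrix $I_T \otimes I_{n-1}$---that is, $I_T \otimes \Z^{n-1}$ in the paper's notation---in the right column of $B_n$, and in particular it contributes zero to the $x_1$ row-block, giving the off-diagonal zero of the bottom-left.

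For the face-point columns, the self-edge face has its three vertices labeled $x_1, x_1, x_2$, and the integral points indicated in Figure \ref{fig:OneCycleBasic} lie on the two boundary edges of the face that emanate from the opposite copies of $x_1$. The computation is formally identical to the $A_n$-computation for the one-cusp case in Figures~\ref{fig:OneCuspBasicOdd} and~\ref{fig:OneCuspBasicEven}, except that the ``third'' vertex now contributes to $x_2$-rows rather than $x_1$-rows. The two copies of $x_1$ contribute respectively an $I_{n-1}$ block (from the standard basis terms $e_{t_i}$ at one vertex) and an $R_{n-1}$-like block (from the shifted terms at the other vertex), summing to $I_{n-1}+R_{n-1}$; the single $x_2$-vertex contributes $S_{n-1}$. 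No other $x_i$ appears on the face, so all other row-blocks receive zero. Assembling these blocks with the edge-point contributions from Step~1 produces the claimed matrix.

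The main obstacle---and essentially the only real content of the proof---is the bookkeeping of the basis orderings at each vertex. The chosen ordering (increasing in $i$ at $x_1$, decreasing at $x_2$, and analogously at subsequent vertices) must simultaneously convert each edge-point contribution $x_a \otimes e_{n-j} + x_b \otimes e_j$ into a standard basis vector, and align the face-point contributions at $x_1$ and $x_2$ to produce exactly $I_{n-1}+R_{n-1}$ and $S_{n-1}$ respectively. Once the ordering is fixed consistently with the choice made in Figure~\ref{fig:OneCuspBasicOdd}/\ref{fig:OneCuspBasicEven}, everything reduces to reading off entries from the definition of $\alpha^*$.
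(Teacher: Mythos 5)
Your overall strategy—verify each block of $B_n$ directly from the definition of $\alpha^*$ together with the chosen basis ordering of $C_0^n$—is exactly the right one, and it is what the paper means when it declares the lemma "an immediate consequence of the definition of $\alpha^*$." Your treatment of the right block is correct: for a tree edge from cusp $a$ to cusp $b$, the integral point with exponents $(n-j,j)$ maps to $x_a\otimes e_{n-j}+x_b\otimes e_j$, and the alternating ordering of the $e_i$'s along $T$ makes each such column a standard basis vector in both the $a$- and $b$-blocks, producing $I_T\otimes I_{n-1}$ with the zero row-block at $x_1$.

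However, your geometric description of the $n-1$ face-point columns is incorrect. You assert that these points "lie on the two boundary edges of the face that emanate from the opposite copies of $x_1$," but if you work backwards from the target block $\bigl(I_{n-1}+R_{n-1},\; S_{n-1},\; 0\bigr)^{\!T}$ and the stated ordering ($e_1,\dots,e_{n-1}$ at $x_1$, reversed at $x_2$), column $j$ must have cusp-$1$ contribution $e_1+e_j$ and cusp-$2$ contribution $e_{n-1-j}$, forcing the integral points to be $(1,j,n-1-j,0)$ for $j=1,\dots,n-1$ on the face with vertices at cusps $1,1,2$. These lie on the single line $t_a=1$ parallel to $\varepsilon_2$; all but the endpoint $(1,n-1,0,0)$ are \emph{interior} face points, not edge points. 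Points actually lying on the two $x_1$-emanating boundary edges would give columns such as $e_j+e_{n-j}$ (on $\varepsilon_1$), which produce a symmetric—and, for $n>3$, singular—block, not $I_{n-1}+R_{n-1}$. The parallel you draw to the $A_n$ computation is also looser than stated: $A_{2k+1}$ uses $k$ face points together with $k$ edge points (the two visible blocks $I_k+R_k+T_k$ over $S_k$, and $I_k$ over $I_k$), whereas the left block of $B_n$ comes from $n-1$ face points alone, since here the second $I$-block is supplied by the $\varepsilon_2\subset T$ edge points. Your account of \emph{which} vertex gives the $I$-, $R$-, and $S$-contributions is correct, so the final matrix identification stands; but the geometric premise supporting it should be replaced by the line $t_a=1$ as above, lest a reader try to verify the block from boundary-edge points and fail.
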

\begin{corollary}
The determinant of $B_n$ is $\pm n$.
\end{corollary}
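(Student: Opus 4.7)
The plan is to show $|\det B_n| = n$ by combining the Schur-complement formula (Lemma \ref{lemma:BlockDeterminant}) with the elementary identity $\det(I_k + R_k - S_k) = k+1$ from Lemma \ref{lemma:DeterminantIdentity}, applied with $k = n-1$. First, I would partition $B_n$ as a $2\times 2$ block matrix $\left(\begin{smallmatrix} A & B \\ C & D \end{smallmatrix}\right)$, separating cusp $1$ from cusps $2,\dots,v$ in the rows and the $n-1$ face-point columns from the $(v-1)(n-1)$ tree-edge-point columns. Under the orderings prescribed in the paper, the blocks read $A = I_{n-1}+R_{n-1}$ and $C = \left(\begin{smallmatrix} S_{n-1} \\ 0 \end{smallmatrix}\right)$ (the $0$ reflecting that the chosen face is disjoint from cusps $3,\dots,v$), while $D$ is $I_T \otimes \Z^{n-1}$ restricted to the rows of cusps $2,\dots,v$.

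Next, I would verify that $|\det D| = 1$. Since $T$ is a tree on $v$ vertices, Lemma \ref{lemma:IncidenceMatrix} says $I_T$ has rank $v-1$ and deleting the cusp-$1$ row yields an invertible $(v-1)\times(v-1)$ matrix of determinant $\pm 1$. The Kronecker substitution $\otimes\Z^{n-1}$ replaces each nonzero entry by the block $I_{n-1}$ (this is precisely the purpose of the reversed row orderings on cusps $2,\dots,v$), so the determinant's absolute value is preserved, giving $|\det D| = 1$.

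Now I would apply Lemma \ref{lemma:BlockDeterminant} to obtain $\det B_n = \det D \cdot \det(A - B D^{-1} C)$. The central computation is the identity $B D^{-1} C = S_{n-1}$. Because $C$ is supported only on cusp-$2$ rows, solving $Dy = C$ produces a vector $y$ supported precisely along the unique path in $T$ from cusp $1$ to cusp $2$, with telescoping cancellations along that path. When $B$ (the cusp-$1$ row block of the tree-edge contributions, nonzero only along the tree edge $\varepsilon_2$ incident to cusp $1$ on this path) is applied to $y$, only the $\varepsilon_2$-coordinates of $y$ survive, and they equal $S_{n-1}$. Consequently $A - B D^{-1} C = I_{n-1} + R_{n-1} - S_{n-1}$, and Lemma \ref{lemma:DeterminantIdentity} with $k = n-1$ yields
$$\det B_n \;=\; \pm 1 \cdot \det(I_{n-1}+R_{n-1}-S_{n-1}) \;=\; \pm n,$$
as required.

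The main obstacle is the verification of $B D^{-1} C = S_{n-1}$. It requires carefully tracking the signed-permutation structure of $D^{-1}$ along the unique tree-path from cusp $1$ to cusp $2$ and confirming that the cusp-ordering conventions force each tree-edge block to be exactly $I_{n-1}$ rather than a reversed identity; only then does the telescoping collapse $BD^{-1}C$ to the single $S_{n-1}$ block contributed through the edge $\varepsilon_2$. Once this identity is established, the remaining steps are direct applications of the three cited lemmas.
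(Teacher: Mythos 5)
Your proposal is correct and uses the same ingredients as the paper: Lemma~\ref{lemma:BlockDeterminant} (Schur complement), Lemma~\ref{lemma:DeterminantIdentity} (the identity $\det(I_{n-1}+R_{n-1}-S_{n-1})=n$), and Lemma~\ref{lemma:IncidenceMatrix} (tree incidence matrices) to handle the tree block. The organization differs slightly — the paper first reduces $\det(B_n)$ to the $2(n-1)\times 2(n-1)$ block $\left(\begin{smallmatrix}I_{n-1}+R_{n-1}&I_{n-1}\\S_{n-1}&I_{n-1}\end{smallmatrix}\right)$ by Laplace expansion along the cusp-$3,\dots,v$ rows (using that those rows vanish outside the $T\setminus\varepsilon_2$ columns, whose determinant is $\pm1$), and then applies the Schur complement to that block, whereas you take one Schur complement with $D$ the entire tree block and verify $BD^{-1}C=S_{n-1}$ directly — but the underlying computation is the same. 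One simplification you could make: since $\varepsilon_2\in T$ joins cusps $1$ and $2$ directly, the unique tree-path from cusp $1$ to cusp $2$ is the single edge $\varepsilon_2$, so $y=D^{-1}C$ is supported only on $\varepsilon_2$; there is no actual telescoping to track. This also repairs the imprecise parenthetical about $B$ — $B$ may well have other nonzero blocks at tree edges incident to cusp $1$, but they are annihilated by $y$, not absent from $B$.
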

\begin{proof}
This follows from
\begin{equation}
\det(B_n)=\pm\det(\begin{pmatrix}I_{n-1}+R_{n-1}&I_{n-1}\\S_{n-1}&I_{n-1}\end{pmatrix}=\pm n,
\end{equation}
where the second equality follows from Lemmas~\ref{lemma:BlockDeterminant} and \ref{lemma:DeterminantIdentity}.
\end{proof}
\begin{figure}[htpb]
\centering
\begin{minipage}[c]{0.48\textwidth}
\scalebox{0.8}{\input{figures_gen/OneCycleBasic.tex}}
\captionsetup{width=10cm}
\caption{Basic generators, tree with $1$-cycle.}\label{fig:OneCycleBasic}
\end{minipage}	
\begin{minipage}[c]{0.48\textwidth}
\scalebox{0.8}{\input{figures_gen/ThreeCycleBasic.tex}}
\captionsetup{width=10cm}
\caption{Basic generators, tree with $3$-cycle.}\label{fig:ThreeCycleBasic}
\end{minipage}					
\end{figure}

\subsubsection{Multiple cusps, no self edge}
Pick a maximal tree with $3$-cycle $G$, and let $C_n$ be the matrix formed by the columns of $\alpha^*$ corresponding to the face points in Figure~\ref{fig:ThreeCycleBasic} together with the edge points on $T=G\setminus\varepsilon_1$.
\begin{lemma}
The matrix $C_n$ is given by
\begin{equation}
C_n=\left(\begin{array}{c|c}
I_{n-1}& \\
S_{n-1}&I_T\otimes\Z^{n-1}\\
R_{n-1}&\\\cline{1-1}
 0 &
\end{array}\right),
\end{equation}
\end{lemma}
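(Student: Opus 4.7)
My plan is to verify the formula by reading off the columns of $\alpha^*$ from Figure~\ref{fig:ThreeCycleBasic}, paralleling the immediate proof given for $B_n$. The key ingredient is the definition: an integral point $p=(t,k)$ with $t=(t_0,t_1,t_2,t_3)$ satisfies $\alpha^*(p)=\sum_{i=0}^{3} x_i\otimes e_{t_i}$, where $x_i$ is the cusp at vertex $i$ of simplex $k$ and $e_0=0$. Consequently, a point lying on the edge between simplex vertices $i$ and $j$ affects only the cusp row-blocks for $x_i$ and $x_j$, and contributes a single standard basis vector to each.

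I would split the columns of $C_n$ into two groups. The first consists of the $3(n-1)$ face points in Figure~\ref{fig:ThreeCycleBasic}, distributed on the three face edges $\varepsilon_1,\varepsilon_2,\varepsilon_3$ joining the cusps $x_1,x_2,x_3$ pairwise; the second consists of the remaining edge points of $T=G\setminus\varepsilon_1$, i.e., the tree edges outside the face, which we arrange (by the choice of $G$) to be incident only to the cusps other than $x_1$. With the row ordering for $x_1,x_2,x_3$ chosen so that the $e_1,\dots,e_{n-1}$ directions match the along-edge labellings in the figure (as in the $B_n$ setup), each face edge contributes one identity-like block at one endpoint and a reversed-or-shifted block at the other. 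Collecting the three edges in the column order of the figure, the face columns assemble into the stacked left column $(I_{n-1};\,S_{n-1};\,R_{n-1})$ in the row-blocks for $x_1,x_2,x_3$, with zero contribution to the remaining cusp row-blocks.

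The tree columns behave exactly as in the $B_n$ verification: the edges of $T$ outside the face form a forest attached to the $3$-cycle and contribute the block $I_T\otimes\Z^{n-1}$ on the row-blocks for $x_2,x_3$ and the remaining $v-3$ cusps, with zero above since no such edge meets $x_1$. This produces the right column of $C_n$ in the claimed form. The only real work, and the main (mild) obstacle, is the combinatorial check that the three face-edge contributions line up as precisely $I_{n-1},\,S_{n-1},\,R_{n-1}$ rather than permuted variants; this is a direct bookkeeping comparison of Figure~\ref{fig:ThreeCycleBasic} with the row-ordering convention, entirely analogous to the verification implicit in the $B_n$ case.
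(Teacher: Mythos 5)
Your high-level plan---read off the columns of $\alpha^*$ from Figure~\ref{fig:ThreeCycleBasic} using $\alpha^*(t,k)=\sum_i x_i\otimes e_{t_i}$---is the right idea and matches the paper's (unstated but implicit) verification, which is indeed ``an immediate consequence of the definition of $\alpha^*$''. However, the column grouping you propose does not match the matrix in the lemma, and carrying out your plan literally would produce a different block structure.

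The left block of $C_n$ has width $n-1$, not $3(n-1)$, and the integral points giving those columns are not edge points ``distributed on the three face edges.'' They are a diagonal family of $n-1$ face points, say with face coordinates $(j,\,n-j-1,\,1)$ for $j=1,\dots,n-1$: all but one lie in the interior of the triangle, and only the last one ($t_1=0$) lies on $\varepsilon_1$. It is precisely because each such point touches all three cusps $x_1,x_2,x_3$ that the column has three nonzero row-blocks $(I_{n-1};\,S_{n-1};\,R_{n-1})$. A point on a face \emph{edge} contributes to only two cusp row-blocks, so edge points cannot produce the middle $S_{n-1}$ block. Furthermore, the edge points on $\varepsilon_2$ and $\varepsilon_3$ belong to $T=G\setminus\varepsilon_1$ and are therefore columns of $I_T\otimes\Z^{n-1}$, not part of your left block; this also means the $x_1$ row of $I_T\otimes\Z^{n-1}$ is \emph{not} zero, since $\varepsilon_2$ is incident to $x_1$ and contributes an $I_{n-1}$ block there. (Indeed, these two extra column blocks $(I_{n-1};I_{n-1};0)$ and $(0;I_{n-1};I_{n-1})$ are exactly the ones appearing alongside $(I;S;R)$ in the matrix $M$ in the corollary's proof.) Finally, note the internal inconsistency: $3(n-1)$ columns cannot assemble into a block of width $n-1$. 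Re-reading the figure---the $3(n-1)$ labelled points there are the $n-1$ face points \emph{together with} the $2(n-1)$ edge points on $\varepsilon_2,\varepsilon_3$ that live in the $I_T$ block---should resolve the confusion.
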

\begin{corollary}
The determinant of $C_n$ is $\pm n$.
\end{corollary}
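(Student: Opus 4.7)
The plan is to compute $\det(C_n)$ by the same template used to establish $\det(B_n) = \pm n$ above: peel off the tree part using Lemma~\ref{lemma:BlockDeterminant}, then reduce a small residual determinant to the identity in Lemma~\ref{lemma:DeterminantIdentity}.

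First, I would apply Lemma~\ref{lemma:BlockDeterminant} to $C_n$ by taking the invertible block $D$ to be the restriction of $I_T \otimes \Z^{n-1}$ to the rows for cusps outside the $3$-cycle vertices $\{x_1, x_2, x_3\}$. The forest obtained from $T$ by deleting $\varepsilon_2, \varepsilon_3$ spans those cusps together with $\{x_1, x_2, x_3\}$, so by Lemma~\ref{lemma:IncidenceMatrix} applied componentwise its incidence matrix (with the three face vertices serving as roots) has determinant $\pm 1$; tensoring with $I_{n-1}$ preserves this sign. Consequently $\det(C_n) = \pm \det(M_n)$, where $M_n$ is the residual $3(n-1) \times 3(n-1)$ matrix obtained from the face columns and from the columns of $\varepsilon_2, \varepsilon_3$, restricted to the rows of $x_1, x_2, x_3$.

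Second, with the row orderings at each face vertex chosen consistently with the $B_n$ calculation (so that tree-edge contributions produce identity blocks), $M_n$ takes the block form
\[
M_n = \begin{pmatrix} I_{n-1} & I_{n-1} & I_{n-1} \\ S_{n-1} & I_{n-1} & 0 \\ R_{n-1} & 0 & I_{n-1} \end{pmatrix},
\]
where the two rightmost column blocks come from $\varepsilon_2, \varepsilon_3$ and the leftmost block collects the face-point contributions. Applying Lemma~\ref{lemma:BlockDeterminant} twice, first with respect to the bottom-right $I_{n-1}$ block and then with respect to the middle $I_{n-1}$ block, collapses $M_n$ (up to sign) to the $(n-1) \times (n-1)$ determinant $\det(I_{n-1} + R_{n-1} - S_{n-1})$, which equals $n$ by Lemma~\ref{lemma:DeterminantIdentity}. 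Combining with the tree contribution gives $\det(C_n) = \pm n$, as required.

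The hardest step will be the sign bookkeeping in the second reduction: one must verify that the two Schur complements produce precisely $+R_{n-1}$ and $-S_{n-1}$ in the residual $(n-1) \times (n-1)$ matrix, matching the form in Lemma~\ref{lemma:DeterminantIdentity}, rather than one of the nearby variants such as $I_{n-1} - R_{n-1} - S_{n-1}$ (which vanishes already at $n=2$) or $I_{n-1} + R_{n-1} + S_{n-1}$ (which gives $1$, not $3$, at $n=3$). This sign check is essentially the same one carried out for $B_n$ and is resolved by propagating the row-ordering conventions at $x_2$ and $x_3$ from the $B_n$ case to the 3-cycle setting.
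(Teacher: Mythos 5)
Your overall strategy is exactly the paper's: reduce modulo the tree via Lemma~\ref{lemma:BlockDeterminant} to a $3(n-1)\times 3(n-1)$ block matrix supported on the three face vertices, then apply Lemma~\ref{lemma:BlockDeterminant} twice more to collapse it to an $(n-1)\times(n-1)$ determinant handled by Lemma~\ref{lemma:DeterminantIdentity}. However, the residual block matrix you wrote down is wrong, and you can detect the error with exactly the sanity check you proposed. Recall the face has edges $\varepsilon_1$ ($x_1\leftrightarrow x_3$, the one removed from $G$), $\varepsilon_2$ ($x_1\leftrightarrow x_2$), and $\varepsilon_3$ ($x_2\leftrightarrow x_3$), so the two tree-edge column blocks are supported on the row pairs $(x_1,x_2)$ and $(x_2,x_3)$. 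Your $M_n$ places the $\varepsilon_3$ identity block in the rows of $x_1$ and $x_3$ instead of $x_2$ and $x_3$ --- i.e.\ you have inadvertently used the support of the deleted edge $\varepsilon_1$. With your matrix
\[
M_n = \begin{pmatrix} I & I & I \\ S & I & 0 \\ R & 0 & I \end{pmatrix},
\]
the two Schur complements produce $I_{n-1}-R_{n-1}-S_{n-1}$, not $I_{n-1}+R_{n-1}-S_{n-1}$; as you yourself note, this vanishes at $n=2$ (indeed $\det M_2 = \det\left(\begin{smallmatrix}1&1&1\\0&1&0\\1&0&1\end{smallmatrix}\right)=0$), so it cannot be the correct reduction. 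The correct residual matrix, consistent with the edge supports above, is
\[
M = \begin{pmatrix} I & I & 0 \\ S & I & I \\ R & 0 & I \end{pmatrix},
\]
for which the first Schur complement (against the bottom-right $I_{n-1}$) gives $\left(\begin{smallmatrix}I&I\\ S-R&I\end{smallmatrix}\right)$ and the second gives $I-(S-R)=I+R-S$, matching Lemma~\ref{lemma:DeterminantIdentity}. So the gap is not merely "sign bookkeeping to be checked later": the matrix you wrote down is inconsistent with your own claimed reduction, and the fix requires placing the $\varepsilon_3$ column block in the correct rows before the Schur-complement steps are applied.
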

\begin{proof}
We have
\begin{equation}
\det(C_n)=\pm\det(M),\qquad M=\begin{pmatrix}I_{n-1}&I_{n-1}&\\S_{n-1}&I_{n-1}&I_{n-1}\\R_{n-1}&&I_{n-1}\end{pmatrix}.
\end{equation}
Using Lemma~\ref{lemma:BlockDeterminant} with $A=\left(\begin{smallmatrix}I_{n-1}&I_{n-1}\\S_{n-1}&I_{n-1}\end{smallmatrix}\right)$, $B=\left(\begin{smallmatrix}0\\I_{n-1}\end{smallmatrix}\right)$, $C=\left(\begin{smallmatrix}R_{n-1}&0\end{smallmatrix}\right)$, $D=I_{n-1}$, we have
\begin{equation}
\det(M)=\det\begin{pmatrix}I_{n-1}&I_{n-1}\\S_{n-1}-R_{n-1}&I_{n-1}\end{pmatrix}=\det(I_{n-1}+R_{n-1}-S_{n-1})=n,
\end{equation}
where the second equation follows from Lemma~\ref{lemma:BlockDeterminant}, and the third from Lemma~\ref{lemma:DeterminantIdentity}.
\end{proof}
This concludes the proof that $\alpha^*$ is basic.

\bibliographystyle{plain}
\bibliography{BibFile}

\end{document}